\documentclass[11pt]{article}




\setlength{\textwidth}{6.5in}
\setlength{\textheight}{9in}
\setlength{\oddsidemargin}{0in}
\setlength{\evensidemargin}{0in}
\setlength{\topmargin}{-.5in}

\pagestyle{myheadings}

\usepackage{amssymb}
\usepackage{epsfig}
\usepackage{mathdots}
\usepackage{amsthm}
\usepackage{amsmath}

\newtheorem{theorem}{Theorem}
\newtheorem{corollary}[theorem]{Corollary}

\newtheorem{lemma}[theorem]{Lemma}
\newtheorem{proposition}[theorem]{Proposition}

\theoremstyle{definition}
\newtheorem{example}[theorem]{Example}
\newtheorem{definition}[theorem]{Definition}
\newtheorem{remark}[theorem]{Remark}

\numberwithin{equation}{section}
\numberwithin{theorem}{section}

\newcommand{\bT}{\begin{theorem}}
\newcommand{\eT}{\end{theorem}}
\newcommand{\bProp}{\begin{proposition}}
\newcommand{\eProp}{\end{proposition}}
\newcommand{\bE}{\begin{example}}
\newcommand{\eE}{\end{example}}
\newcommand{\bL}{\begin{lemma}}
\newcommand{\eL}{\end{lemma}}
\newcommand{\bP}{\begin{proof}}
\newcommand{\eP}{\end{proof}}
\newcommand{\bC}{\begin{corollary}}
\newcommand{\eC}{\end{corollary}}
\newcommand{\bD}{\begin{definition}}
\newcommand{\eD}{\end{definition}}
\newcommand{\be}{\begin{enumerate}}
\newcommand{\ee}{\end{enumerate}}
\newcommand{\beqa}{\begin{eqnarray*}}
\newcommand{\eeqa}{\end{eqnarray*}}
\newcommand{\beqaa}{\begin{eqnarray}}
\newcommand{\eeqaa}{\end{eqnarray}}
\newcommand{\ba}{\begin{array}}
\newcommand{\ea}{\end{array}}





\title{Some implications of Chu's $_{10}\psi_{10}$
extension of \\Bailey's $_{6}\psi_{6}$ summation formula}



\author{James McLaughlin, Andrew V. Sills, Peter Zimmer}

\date{\today}

\begin{document}
\maketitle

key words: {$q$-Series, Rogers-Ramanujan Type Identities, Bailey chains, False Theta Series}

subject class[2000]: {Primary: 33D15. Secondary:11B65, 05A19.}

\begin{abstract}
Lucy Slater used Bailey's $_6\psi_6$ summation formula to derive the
Bailey pairs she used to construct her famous list of 130 identities
of the Rogers-Ramanujan type.

In the present paper we apply the same techniques to Chu's
$_{10}\psi_{10}$ generalization of Bailey's formula to produce quite
general Bailey pairs. Slater's Bailey pairs are then recovered as
special limiting cases of these more general pairs.

In re-examining Slater's work, we find that her Bailey pairs are,
for the most part, special cases of more general Bailey pairs
containing one or more free parameters. Further, we also find new
general  Bailey pairs (containing one or more free parameters) which
are also implied by the $_6\psi_6$ summation formula.

Slater used the Jacobi triple product identity (sometimes coupled
with the quintuple product identity) to derive her
infinite products. Here we also use other summation formulae
(including special cases of the $_6\psi_6$ summation formula and Jackson's
$_6\phi_5$ summation formula) to derive some of our infinite products.

We use the new Bailey pairs, and/or the summation methods mentioned
above, to give new proofs of some general series-product identities
due to Ramanujan, Andrews and others. We also derive a new general
series-product identity, one which may be regarded as a partner to
one of the  Ramanujan identities. We also find new transformation
formulae between basic hypergeometric series, new identities of
Rogers-Ramanujan type, and new false theta series identities. Some of these latter are a kind of ``hybrid" in that one side of the identity consists a basic hypergeometric series, while the other side is formed from a theta product multiplied by a false theta series. This type of identity appears to be new.
\end{abstract}

\section{Introduction}
Bailey's  $_{6}\psi_6$ identity \cite{B36},
\begin{equation}\label{baileyeq1}
_{6}\psi_{6}
\left [
\begin{matrix}
q\sqrt{a},-q\sqrt{a},b,c,d,e\\
\sqrt{a},-\sqrt{a},qa/b,qa/c,qa/d,qa/e
\end{matrix};q,\frac{q a^2}{bcde}
\right ]
=\frac{ (aq,aq/bc,aq/bd,aq/be,aq/cd,aq/ce,aq/de,q,q/a;q)_{\infty} } {
(aq/b,aq/c,aq/d,aq/e,q/b,q/c,q/d,q/e,qa^2/bcde;q)_{\infty} },
\end{equation}
is probably the most important summation formula for  bilateral basic hypergeometric series,
 and has many applications to number theory and partitions - see Andrews' paper \cite{A74}
 for some examples. This summation formula was also the main tool used by Slater
 \cite{S51, S52} to derive Bailey pairs and, from these,
  her list of 130 identities of the Rogers-Ramanujan type.

Bailey's formula was extended by Shukla \cite{S59}, and Shukla's
formula was later further extended by Chu \cite{C05}. Let
{\allowdisplaybreaks
\begin{multline}\label{Keq}
K:=K(a,b,c,d,e,u,v,q)
=\big [
u v \left(\sigma _3-a \sigma _1\right) \left(q \sigma _3-a \sigma _1\right) a^2
+(1-q) u v \left(a^2-\sigma _4\right)
   \left(a^2-\sigma _2 a+\sigma _4\right) a\\+(u+v) (a+u v) \left(\sigma _3-a \sigma _1\right) \left(a^2-q \sigma _4\right)
   a
   +\left(u^2+a\right) \left(v^2+a\right) \left(a^2-\sigma _4\right) \left(a^2-q \sigma _4\right)
\big ]\\
\times \{\left(a^2-b c d e\right) \left(a^2-b c d e q\right) (1-u) (a-u) (1-v) (a-v)\}^{-1},
\end{multline}
} where, for $1\leq k \leq 4$, $\sigma_k$ denotes the $k$-th
elementary symmetric function in $\{b,c,d,e\}$. Then

\begin{proposition}$($Chu \cite{C05}$)$ For complex numbers $a$, $b$, $c$, $d$ and $e$ satisfying $|a^2/bcdeq|<1$, there holds the identity
{\allowdisplaybreaks
\begin{multline}\label{chueq1}
_{10}\psi_{10}
\left [
\begin{matrix}
q\sqrt{a},-q\sqrt{a},b,c,d,e,qu,qa/u,qv,qa/v\\
\sqrt{a},-\sqrt{a},qa/b,qa/c,qa/d,qa/e,u,a/u,v,a/v
\end{matrix};q,\frac{a^2}{qbcde}
\right ]
\\
= \sum_{n=0}^{\infty}\frac{(1-a
q^{2n})(b,c,d,e,uq,vq,aq/u,aq/v;q)_n}
{(1-a)(aq/b,aq/c,aq/d,aq/e,u,v,a/u,a/v;q)_n} \left(
\frac{a^2}{qbcde}\right)^n \phantom{adsadasdadsasdasdsdad}\\
+ \sum_{n=1}^{\infty}\frac{(1-
q^{2n}/a)(b/a,c/a,d/a,e/a,uq/a,vq/a,q/u,q/v;q)_n}
{(1-1/a)(q/b,q/c,q/d,q/e,1/u,1/v,u/a,v/a;q)_n} \left(
\frac{a^2}{qbcde}\right)^n \\
=K \, \frac{
(aq,aq/bc,aq/bd,aq/be,aq/cd,aq/ce,aq/de,q,q/a;q)_{\infty} } {
(aq/b,aq/c,aq/d,aq/e,q/b,q/c,q/d,q/e,qa^2/bcde;q)_{\infty} }.
\end{multline}
}
\end{proposition}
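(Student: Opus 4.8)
The first equality in \eqref{chueq1} is a matter of bookkeeping: writing the bilateral ${}_{10}\psi_{10}$ as $\sum_{n\ge 0}+\sum_{n<0}$ and applying the reflection $(x;q)_{-m}=(-q/x)^m q^{\binom{m}{2}}/(q/x;q)_m$ to the terms with $n=-m<0$ sends $a\mapsto 1/a$, $b\mapsto b/a$, $c\mapsto c/a$, and so on, which is exactly the second (shifted) unilateral sum. So the real content is the evaluation of the bilateral ${}_{10}\psi_{10}$ as $K\,\Pi$, where $\Pi$ denotes the infinite product on the right of \eqref{chueq1}. The decisive structural observation is that $\Pi$ is \emph{identical} to the product in Bailey's formula \eqref{baileyeq1}: it does not involve $u$ or $v$ at all. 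Thus the entire dependence on the two extra very-well-poised pairs $(qu,qa/u)$ and $(qv,qa/v)$ is forced into the rational prefactor $K$. Note also that the series is invariant under $u\mapsto a/u$ and under $v\mapsto a/v$, and is symmetric in $u\leftrightarrow v$; $K$ must inherit these symmetries, which is a useful check on \eqref{Keq}.

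My plan is to prove this by \emph{peeling off the two extra pairs one at a time}, reducing Chu's ${}_{10}\psi_{10}$ first to Shukla's ${}_8\psi_8$ \cite{S59} and then to Bailey's ${}_6\psi_6$ \eqref{baileyeq1}, which I may assume. The summand of the ${}_{10}\psi_{10}$ is the summand of the ${}_6\psi_6$ multiplied by $\frac{(qv,qa/v;q)_n}{(v,a/v;q)_n}\frac{(qu,qa/u;q)_n}{(u,a/u;q)_n}$ together with the change of argument from Bailey's $qa^2/bcde$ to $a^2/qbcde$. A naive route would expand $\frac{(qv,qa/v;q)_n}{(v,a/v;q)_n}=\frac{1-(v+a/v)q^n+aq^{2n}}{(1-v)(1-a/v)}$ and distribute; but each monomial $q^{kn}$ shifts the argument to $zq^{k}$ and destroys the very-well-poised balance, so the resulting ${}_6\psi_6$'s are \emph{not} individually Bailey-summable. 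The correct engine is \emph{Abel's lemma on summation by parts}: writing the summand as $A_n(B_n-B_{n+1})$ with $A_n$ absorbing the $v$-pair and $B_n-B_{n+1}$ proportional to the ${}_8\psi_8$ summand, the telescoping trades the extra pair for an explicit rational factor in $v$ \emph{without} unbalancing the series, the boundary terms at $n\to\pm\infty$ vanishing under the hypothesis $|a^2/bcdeq|<1$.

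Repeating the summation by parts to remove the $u$-pair lands on Bailey's ${}_6\psi_6$, whose sum is $\Pi$ by \eqref{baileyeq1}. Multiplying the two accumulated rational factors and simplifying --- expressing everything through the elementary symmetric functions $\sigma_1,\dots,\sigma_4$ of $\{b,c,d,e\}$ and using the symmetry $u\mapsto a/u$ --- should collapse the product into the compact form \eqref{Keq}. A reassuring normalization check is the limit $u,v\to\infty$: there each pair factor tends to $q^{n}$, so the product tends to $q^{2n}$ and the argument $a^2/qbcde$ is restored to Bailey's $qa^2/bcde$; correspondingly the top-degree term of \eqref{Keq} gives $K\to (a^2-\sigma_4)(a^2-q\sigma_4)/[(a^2-bcde)(a^2-bcdeq)]=1$ since $\sigma_4=bcde$, so \eqref{chueq1} degenerates exactly to \eqref{baileyeq1}.

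The main obstacle is twofold. First, one must choose the summation-by-parts splitting so that the telescoped remainder is again a genuine very-well-poised bilateral series with the \emph{same} argument and product --- i.e.\ so the recursion closes on Shukla's and then Bailey's series rather than spawning unbalanced ${}_6\psi_6$'s; getting this split right (and controlling the bilateral boundary terms) is the technical heart. Second, there is the purely algebraic labor of collapsing the product of the two rational correction factors into the symmetric closed form \eqref{Keq}. As an alternative one could fix all parameters but $b$ and argue by analytic continuation in the style of Ismail, matching the two sides at $b=q^{-N}$ where the left side terminates; but there the terminating series is a very-well-poised ${}_{10}\phi_9$, which admits no product evaluation until the two extra pairs are removed, so that route requires the same descent and offers no shortcut.
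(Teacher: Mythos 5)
First, a point of reference: the paper you are working against does not prove this proposition at all --- it is imported from Chu's paper \cite{C05}, stated with attribution, and used downstream as a black box. So the only question is whether your outline stands on its own. Your preliminary observations are correct and worth keeping: the first equality in \eqref{chueq1} is indeed just the standard reflection $(x;q)_{-m}=(-q/x)^m q^{m(m-1)/2}/(q/x;q)_m$ applied to the negative-index terms; the infinite product carries no $u,v$-dependence, so all of it must sit in $K$; both the series and the expression \eqref{Keq} are invariant under $u\mapsto a/u$, $v\mapsto a/v$ and $u\leftrightarrow v$; and your degeneration check (each extra pair factor $\frac{(qu,qa/u;q)_n}{(u,a/u;q)_n}\to q^n$ as $u\to\infty$, restoring Bailey's argument $qa^2/bcde$, with $K\to1$ because $\sigma_4=bcde$) is accurate. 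Moreover, the engine you propose --- Abel's lemma on summation by parts, peeling the ${}_{10}\psi_{10}$ down through Shukla's ${}_8\psi_8$ \cite{S59} to Bailey's ${}_6\psi_6$ \eqref{baileyeq1} --- is in substance the method Chu actually uses in \cite{C05}, so your strategy is not merely plausible; it is the historically correct one.

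Nevertheless, as a proof the proposal has a genuine gap, and you name it yourself without closing it: everything hinges on exhibiting the splitting $A_n(B_n-B_{n+1})$ explicitly and then verifying three things. (i) That after summation by parts the series $\sum_n B_{n+1}(A_{n+1}-A_n)$ recombines into a very-well-poised bilateral summand of the \emph{same} argument --- this is precisely the point where, as you correctly observe, naive expansion of $\frac{(qv,qa/v;q)_n}{(v,a/v;q)_n}$ fails, and asserting that a good splitting exists is not the same as producing one; the choice of $A_n$ and $B_n$ as specific ratios of $q$-shifted factorials, and the factorization of their differences, is the entire technical content. (ii) That the bilateral boundary terms vanish, i.e. $A_{\pm N}B_{\pm N}\to 0$ as $N\to\infty$ under $|a^2/qbcde|<1$; you mention this but give no estimate. (iii) That the two accumulated rational correction factors multiply out to the closed form \eqref{Keq} --- a heavy computation, but it cannot be waved at, since the closed form of $K$ is exactly what distinguishes Chu's theorem from Bailey's. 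Until the $A_n,B_n$ are written down and (i)--(iii) are checked, what you have is a correct research plan that happens to coincide with Chu's published route, not a proof.
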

Upon letting $u\to \infty$, $v\to \infty$, we obtain Bailey's  formula \eqref{baileyeq1},
while letting $u \to \infty$ recovers Shukla's \cite{S59} $_8\psi_8$ generalization of Bailey's formula. For most values of the parameters, the expression for $K$ is quite complicated, but we note in passing that the case $b=a/c$ results in considerable simplification.

\begin{corollary} For complex numbers $a$, $c$, $d$ and $e$ satisfying $|a/deq|<1$, there holds the identity
{\allowdisplaybreaks
\begin{multline}\label{chueq2}
_{10}\psi_{10}
\left [
\begin{matrix}
q\sqrt{a},-q\sqrt{a},a/c,c,d,e,qu,qa/u,qv,qa/v\\
\sqrt{a},-\sqrt{a},cq,qa/c,qa/d,qa/e,u,a/u,v,a/v
\end{matrix};q,\frac{a}{qde}
\right ]\\
=\frac{(1 - u/c)(1 - cu/a)(1 - v/c)(1 - cv/a)}
{(1 - u/a)(1 - u)(1 - v/a)(1 - v)}
\frac{ (aq,q,cq/d,cq/e,aq/cd,aq/ce,aq/de,q,q/a;q)_{\infty} } {
(cq,aq/c,aq/d,aq/e,cq/a,q/c,q/d,q/e,qa/de;q)_{\infty} }.
\end{multline}
}
\end{corollary}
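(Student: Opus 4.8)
The plan is to obtain the Corollary as nothing more than the specialization $b=a/c$ of the Proposition, so that the only genuine work is to show that the complicated factor $K$ collapses to the rational function in $u$ and $v$ displayed in \eqref{chueq2}. First I would substitute $b=a/c$ into \eqref{chueq1} and record the routine reductions. The argument becomes $a^2/(q(a/c)cde)=a/(qde)$, the lower parameter $qa/b$ becomes $qc$, and the convergence hypothesis $|a^2/bcdeq|<1$ turns into $|a/deq|<1$, matching the statement. On the far right each factor of the infinite product simplifies directly: for instance $aq/bc\mapsto q$, $aq/bd\mapsto cq/d$, $aq/be\mapsto cq/e$, $q/b\mapsto cq/a$, and $qa^2/bcde\mapsto qa/de$, reproducing exactly the product shown in \eqref{chueq2}. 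None of this is delicate.

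The substance is the evaluation of $K(a,a/c,c,d,e,u,v,q)$. Write $K=B/D$, where $B$ is the bracketed numerator in \eqref{Keq} and $D=(a^2-bcde)(a^2-bcdeq)(1-u)(a-u)(1-v)(a-v)$. Setting $b=a/c$ forces $bc=a$, hence $\sigma_4=bcde=ade$, so that $a^2-bcde=a(a-de)$, $a^2-bcdeq=a(a-deq)$, and $a^2-q\sigma_4=a(a-qde)$. Using $(a-u)=a(1-u/a)$ and $(a-v)=a(1-v/a)$ to rewrite $D$, the asserted evaluation of $K$ is equivalent to the purely polynomial statement
\[
B\big|_{b=a/c}=a^4(a-de)(a-deq)(1-u/c)(1-cu/a)(1-v/c)(1-cv/a),
\]
and this is what I would aim to prove.

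To establish this identity I would exploit that $B$ is symmetric in $u$ and $v$ and has degree at most $2$ in each of them. It therefore suffices to show that, after setting $b=a/c$, $B$ vanishes at $u=c$ and at $u=a/c$; vanishing at $v=c$ and $v=a/c$ then follows by symmetry, forcing $B|_{b=a/c}=\lambda\,(1-u/c)(1-cu/a)(1-v/c)(1-cv/a)$ for some $\lambda$ independent of $u$ and $v$. The constant is then pinned down by the coefficient of $u^2v^2$, which on the left comes solely from the last summand $(u^2+a)(v^2+a)(a^2-\sigma_4)(a^2-q\sigma_4)$ and equals $(a^2-ade)(a^2-adeq)=a^2(a-de)(a-deq)$, while on the right it equals $\lambda/a^2$; hence $\lambda=a^4(a-de)(a-deq)$, exactly as required. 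The one genuinely computational step, and the main obstacle, is the verification that $B|_{b=a/c,\,u=c}=0$: this forces one to substitute $u=c$ together with $bc=a$ into all four terms of \eqref{Keq}, express $\sigma_1,\sigma_2,\sigma_3$ explicitly in $a,c,d,e$, and check that everything cancels. Structurally the cancellation is to be expected, since the specialization makes $b$ and $c$ a reciprocal pair with $bc=a$, and the roots $u=c$ and $u=a/c=b$ are precisely the values at which the free parameter $u$ collides with that pair.
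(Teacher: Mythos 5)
Your overall route coincides with the paper's: the Corollary is nothing but the specialization $b=a/c$ of Chu's formula \eqref{chueq1}; the reductions of the argument to $a/qde$, of the parameter list, of the convergence condition, and of the infinite products are exactly as you record them; and the only substantive point is the collapse of $K$. Your reduction of that collapse to the polynomial identity $B|_{b=a/c}=a^4(a-de)(a-deq)(1-u/c)(1-cu/a)(1-v/c)(1-cv/a)$ is correct, and the interpolation scheme you propose (symmetry in $u,v$, degree at most two in each, the two roots $u=c,\,a/c$, and the leading coefficient read off from $(u^2+a)(v^2+a)(a^2-\sigma_4)(a^2-q\sigma_4)$, giving $\lambda=a^4(a-de)(a-deq)$) is logically sound. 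The paper itself offers no written verification (the analogous simplification of $K_1(n)$ is explicitly delegated to a computer algebra system), so a structured hand proof here is a genuine addition.

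However, as written the proposal stops exactly where all of the computation lives: the vanishing of the numerator $B$ at $u=c$ is flagged as ``the main obstacle'' and then supported only by the heuristic that $u$ collides with the reciprocal pair $\{b,c\}$. That is motivation, not proof, so there is a gap — though one that closes easily, and in a way that makes the interpolation scaffolding unnecessary. With $b=a/c$ put $\gamma=c+a/c$, $s=d+e$, $p=de$; then
\begin{align*}
\sigma_3-a\sigma_1 &= \gamma(p-a), &
a^2-\sigma_2 a+\sigma_4 &= -a\gamma s, \\
a^2-\sigma_4 &= a(a-p), &
a^2-q\sigma_4 &= a(a-qp),
\end{align*}
and $q\sigma_3-a\sigma_1=as(q-1)+\gamma(qp-a)$. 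Substituting into \eqref{Keq}, the $s$-parts of the first two terms of $B$ cancel against each other, and what remains factors uniformly:
\begin{align*}
B &= a^2(a-p)(a-qp)\bigl[\,uv\gamma^2-\gamma(u+v)(a+uv)+(u^2+a)(v^2+a)\,\bigr]\\
  &= a^2(a-p)(a-qp)\,(u^2-\gamma u+a)(v^2-\gamma v+a).
\end{align*}
Since $u^2-\gamma u+a=a(1-u/c)(1-cu/a)$, which indeed vanishes at $u=c$ and $u=a/c$, this one computation delivers both the vanishing you needed and the entire claimed factorization at once; dividing by the denominator $a^2(a-p)(a-qp)(1-u)(a-u)(1-v)(a-v)$ then gives precisely the prefactor in \eqref{chueq2}.
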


Since \eqref{baileyeq1} was used by Slater to derive her Bailey
pairs, it is natural to ask if Chu's generalization of Bailey's formula at
\eqref{chueq1}  can be used similarly  to produce
any new interesting results. The present paper is in part an
investigation of that question.

One observation we make is that most of Slater's Bailey pairs are
special cases of more general Bailey pairs containing one or more
free parameters - see Corollaries \ref{csgen1} and \ref{csgen2}.
Slater could have derived these more general pairs herself, but it
would seem that she was primarily interested in the special cases
which would lead to identities of the Rogers-Ramanujan type.

We also note that Slater used the Jacobi Triple Product identity to
derive her infinite products. In the present paper we use other
summation formulae, including special cases of the $_6\psi_6$-
 and  $_6\phi_5$ summation formulae, to derive some infinite
 products.

 In  the transformation at \eqref{Seq1}
 below, Slater essentially employed three cases ($y,z \to \infty$
 and $y = \pm \sqrt{aq}$, $z \to \infty$) to make the series
 containing the $\alpha_n$ sequences summable. In the present paper
 we also explore additional cases, in the process discovering some
interesting  new identities (see Section \ref{secids} below).

Our results include new proofs of some general series-product
identities due to Ramanujan, Andrews and others, and also a new
general series-product identity
\begin{equation}
\sum_{n=0}^\infty \frac{(q/z;q)_{n+1} (z;q)_n q^{n^2+n} }{(q;q)_{2n+1} }
= \frac{ (zq^2, q/z, q^3;q^3)_\infty}{(q;q)_\infty},
\end{equation}
which may be regarded as a
partner to an identity equivalent to one recorded by Ramanujan in his lost
notebook~\cite[p. 99, Entry 5.3.1 with $a=-z/q$ and $q\to\sqrt{q}$
throughout]{AB09}:
\begin{equation}
 \sum_{n=0}^\infty \frac{(q/z;q)_{n} (z;q)_n q^{n^2} }{(q;q)_{2n} }
= \frac{ (zq, q^2/z, q^3;q^3)_\infty}{(q;q)_\infty}.
\end{equation}
We also find new
transformation formulae between basic hypergeometric series, new
identities of Rogers-Ramanujan type, and new false theta series
identities. Some of these latter are a kind of ``hybrid" in that one side of the identity consists a basic hypergeometric series, while the other side is formed from a theta product multiplied by a false theta series.  For example,
\begin{equation}
  \sum_{n=0}^\infty \frac{ q^{n^2+n} }{(1-q^{2n+1}) (q;q)_n^2 }
  = \frac{ \sum_{n=0}^\infty q^{6n^2+2r} (1-q^{8r+4})}{(q;q)_\infty}
\end{equation}

This type of identity appears to be new.

We employ the usual notations:
\begin{align*}
           (a;q)_n &:= (1-a)(1-aq)\cdots (1-aq^{n-1}), \\
          (a_1, a_2, \dots, a_j; q)_n &:= (a_1;q)_n (a_2;q)_n \cdots (a_j;q)_n ,\\
           (a;q)_\infty &:= (1-a)(1-aq)(1-aq^2)\cdots, \mbox{ and }\\
          (a_1, a_2, \dots, a_j; q)_\infty &:= (a_1;q)_\infty (a_2;q)_\infty \cdots (a_j;q)_\infty,
\end{align*}

For later use, we recall that a  pair of sequences $(\alpha_n, \beta_n)$ that satisfy  $\alpha_0=1$ and
\begin{equation}\label{bpeq}
\beta_n = \sum_{j=0}^{n} \frac{\alpha_j}{(q;q)_{n-j}(aq;q)_{n+j}},
\end{equation}
is termed a \emph{Bailey pair relative to $a$.}

\section{Bailey Pairs from Chu's Extension of the $_6\psi_6$ summation formula}
\subsection{General Bailey pairs}
We first consider the case $e=a$ and $d=q^{-N}$ in Chu's formula. These same substitutions were made by Slater in \eqref{baileyeq1}, and gave rise to a quite general Bailey pair (see below), which in turn led to many new identities of Rogers-Ramanujan type.
Let $K$ be defined as at \eqref{Keq}.
\begin{theorem}\label{t2}
The pair of sequences $(\alpha_n, \beta_n)$ is a Bailey pair with respect to $a$,
where
\begin{align}\label{chubp1}
\alpha_n&:=
\frac{(q\sqrt{a},-q\sqrt{a},a,b,c,qu,qa/u,qv,qa/v;q)_n}
{(\sqrt{a},-\sqrt{a},qa/b,qa/c,u,a/u,v,a/v,q;q)_n}
\left( \frac{-a}{bc}\right)^n q^{n(n-3)/2}, \\
\beta_n&:= K_1(n) \frac{(aq/bc;q)_n}{(aq/b,aq/c,q;q)_n},\notag
\end{align}
where $K_1(n):=K(a,b,c,q^{-n},a,u,v,q)$.
\end{theorem}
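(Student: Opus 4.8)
The plan is to derive the Bailey pair directly from Chu's summation \eqref{chueq1} by the specialization $e=a$, $d=q^{-N}$, exactly as flagged in the text preceding the statement. First I would dispose of the normalization: at $n=0$ every Pochhammer symbol in \eqref{chubp1} equals $1$, so $\alpha_0=1$, and it remains only to verify the defining relation \eqref{bpeq}, namely $\beta_N=\sum_{n=0}^{N}\alpha_n/[(q;q)_{N-n}(aq;q)_{N+n}]$ for each integer $N\ge 0$ (letting $N$ range over all nonnegative integers then gives the pair for every index).

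The key structural observation is what the two specializations do to the bilateral series in the middle member of \eqref{chueq1}. The second sum there (the $n<0$ half of the $_{10}\psi_{10}$) carries the factor $(e/a;q)_n$; setting $e=a$ turns this into $(1;q)_n$, which vanishes for every $n\ge 1$, so the negative half is annihilated and only the unilateral first sum survives. Setting $d=q^{-N}$ then inserts $(q^{-N};q)_n$ into that surviving sum, terminating it at $n=N$. Since both sides are now finite rational expressions in the remaining parameters, the restriction $|a^2/bcdeq|<1$ is immaterial and the identity persists by analytic continuation, leaving a terminating very-well-poised sum.

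Next I would massage the $n$-th surviving term $T_n$ into Bailey-pair shape. With $e=a$ the pair $(e;q)_n,(aq/e;q)_n$ becomes $(a;q)_n,(q;q)_n$, supplying precisely the $(a;q)_n$ and $(q;q)_n^{-1}$ appearing in $\alpha_n$. Writing $(q^{-N};q)_n=(-1)^n q^{\binom n2-Nn}(q;q)_N/(q;q)_{N-n}$ and combining the resulting $q$-powers with those from the geometric factor $(a/qbcd)^n=(a/bc)^n q^{(N-1)n}$, the exponents collapse to $q^{n(n-3)/2}$ and the sign to $(-a/bc)^n$; using $(aq^{N+1};q)_n=(aq;q)_{N+n}/(aq;q)_N$ then yields the clean factorization
\[
T_n=(q;q)_N\,(aq;q)_N\,\frac{\alpha_n}{(q;q)_{N-n}\,(aq;q)_{N+n}}.
\]
Summing over $0\le n\le N$, the left side of \eqref{chueq1} becomes $(q;q)_N(aq;q)_N\sum_{n=0}^{N}\alpha_n/[(q;q)_{N-n}(aq;q)_{N+n}]$, so the Bailey relation \eqref{bpeq} will follow as soon as I identify the right side of \eqref{chueq1} under the same specialization.

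Finally I would evaluate that right side. Its leading factor is by definition $K_1(N)=K(a,b,c,q^{-N},a,u,v,q)$. In the infinite product, putting $e=a$ lets the factors $q/b,q/c,q/d,q,q/a$ cancel between numerator and denominator, leaving $(aq,aq/bc,aq/bd,aq/cd;q)_\infty/(aq/b,aq/c,aq/d,aq/bcd;q)_\infty$; then $d=q^{-N}$ together with $(xq^{N};q)_\infty=(x;q)_\infty/(x;q)_N$ telescopes this down to $(aq;q)_N(aq/bc;q)_N/[(aq/b;q)_N(aq/c;q)_N]$. Dividing by the normalizing factor $(q;q)_N(aq;q)_N$ extracted from the left side produces exactly $\beta_N=K_1(N)(aq/bc;q)_N/(aq/b,aq/c,q;q)_N$, completing the verification. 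I expect the only real obstacle to be bookkeeping: correctly collecting the powers of $q$ and the signs in the $(q^{-N};q)_n$ reduction, and tracking the chain of cancellations in the product. The conceptual content lies entirely in the observation that $e=a$ kills the lower bilateral tail and collapses the $_{10}\psi_{10}$ into a terminating very-well-poised series whose summand is precisely the Bailey-pair kernel.
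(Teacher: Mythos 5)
Your proposal is correct and follows essentially the same route as the paper's proof: specialize $e=a$ to kill the negative-index half of the bilateral sum in \eqref{chueq1}, set $d=q^{-N}$ to terminate the series, rewrite $(q^{-N};q)_n$ via \eqref{q-njeq}, telescope the infinite product down to the $N$-th partial products, and read off the relation \eqref{bpeq}. The only difference is that you spell out the bookkeeping (exponent collection, product cancellation, and the analytic-continuation remark about the convergence condition) that the paper leaves as ``simple manipulations.''
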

\begin{proof}
First set $e=a$, so that all the terms with negative index in the
bilateral sum at \eqref{chueq1} become zero. Next, for each
non-negative integer $n$, set $d=q^{-n}$, so that the sum on the
left side of \eqref{chueq1} becomes a finite sum, with the summation
index running from 0 to $n$. Simplify the resulting product on the
right side of \eqref{chueq1}, and use the identity (see
\cite[(I.10), page 351]{GR04})
\begin{equation}\label{q-njeq}
(q^{-n};q)_j=\frac{(q;q)_n q^{j(j-1)/2}}{(q;q)_{n-j}(-q^n)^j}
\end{equation}
to modify the series side. The result then follows from \eqref{bpeq}, after some simple manipulations, noting also that $K_1(0)=1$, so that the requirement $\alpha_0=\beta_0=1$ is satisfied.
\end{proof}

The expression for $K_1(n)$ above is  generally
quite complicated, and thus so also is the expression for the $\beta_n$.
However, as was also the case for $K$ above, if we set
$b=a/c$ then $K_1(n)$ simplifies considerably. One can check, preferably
using a computer algebra system, that
{\allowdisplaybreaks
\begin{equation}
K(a,a/c,c,q^{-n},a,u,v,q)=
\begin{cases}1, &n=0,\\
&\\
\displaystyle{\frac{(c - u)(c - v)(a - c u)(a - c v)} {c^2(a - u)(a
- v)(1 - u)(1 - v)}} &\phantom{a}\\
-\displaystyle{\frac{(1-c) (c-a) (c-a q) (1-c q) u v}
{c^2 q (a-u)
   (1-u) (a-v) (1-v)}}, &n=1,\\
   &\\
\displaystyle{\frac{(c - u)(c - v)(a -
c u)(a - c v)} {c^2(a - u)(a - v)(1 - u)(1 - v)}}, &n>1.
\end{cases}
\end{equation}
}

Upon letting $u\to \infty$, $v \to \infty$ in the Bailey pair in
Theorem \ref{t2}, we recover the following Bailey pair (with respect to
$a$) due to Slater,
{\allowdisplaybreaks
\begin{align}\label{Sgen1}
\alpha_n&=\frac{(1-a q^{2n})(a,b,c;q)_n}{(1-a)(aq/b,aq/c,
q;q)}\left( \frac{-a}{bc}\right)^n q^{(n^2+n)/2},\\
\beta_n&=\frac{(aq/bc;q)_n}{(aq/b,aq/c,q;q)_n}, \notag
\end{align}
}
which is implicitly contained in Equation (4.1) of \cite{S51}.

It would appear that Slater's principal motivation was to prove identities of the Rogers-Ramanujan type, so  this Bailey pair, and other general pairs mentioned below, were not stated explicitly by her in
\cite{S51} and \cite{S52}, where she instead listed many special
cases of them. However, they could all have been easily derived by her, using the same methods she used to derive the special cases.

We remark in passing that all the Bailey pairs in Slater's \textbf{B}, \textbf{F} and \textbf{H}
tables, as well as pairs \textbf{E(3)}, \textbf{E(6)} and
\textbf{E(7)} (see \cite[page 468]{S51}), are derived from the
 Bailey pair at \eqref{Sgen1}.

Slater also showed \cite[Equation (1.3) on page 462]{S51} that if $(\alpha_n, \beta_n)$ is a Bailey pair with respect to $a$, then, for non-zero complex numbers $y$ and $z$,
\begin{equation}\label{Seq1}
\sum_{n=0}^{\infty}(y,z;q)_n \left (\frac{aq}{yz}\right)^n \beta_n
=\frac{(aq/y,aq/z;q)_{\infty}}{(aq,aq/yz;q)_{\infty}}
\sum_{n=0}^{\infty}\frac{(y,z;q)_n}{(aq/y,aq/z;q)_n}\left (\frac{aq}{yz}\right)^n \alpha_n.
\end{equation}

A finite generalization of this identity is of course implied by Bailey's Lemma (see, for example, Theorem 12.2.3 in \cite{AAR99}), namely, if  $(\alpha_n, \beta_n)$ is a Bailey pair with respect to $a$, and $N$ is a non-negative integer, then
{\allowdisplaybreaks
\begin{equation}\label{wpbteq1}
\sum_{n=0}^{N} \frac{(y,z,q^{-N};q)_n }{(y z q^{-N}/a;q)_n}\,q^n \beta_n=
\frac{(a q/y,a q/z;q)_{N}} {(a
q,a q/y z;q)_{N}}
\sum_{n=0}^{N} \frac{(y,z, q^{-N};q)_{n}} {(a q/y,a q/z,a q^{1+N};q)_{n}}\left ( \frac{-a q^N}{yz}\right)^n q^{-n(n-3)/2}\alpha_n.
\end{equation}
}

The identity at \eqref{Seq1} is a particular case of the \emph{Bailey
Transform}: if
$\beta_n=\sum_{r=0}^n \alpha_r u_{n-r} v_{n+r}$
and
$\gamma_n =$ $ \sum_{r=n}^\infty \delta_r u_{r-n}$ $v_{r+n}$,
then
\[ \sum_{n=0}^\infty \alpha_n \gamma_n = \sum_{n=0}^\infty  \beta_n \delta_n. \]
In the present paper, for ease of notation we will refer to
\eqref{Seq1} as the \emph{Bailey Transform}.

If we set $b=a/c$ in the Bailey pair from Theorem \ref{t2},
 and then substitute this pair into \eqref{Seq1} and \eqref{wpbteq1}, we get the following unusual basic hypergeometric identities.
\begin{corollary}\label{C65}
Let  $N\geq1$ be an integer. Then
\begin{multline}\label{wpbteqcase1}
1+\frac{(c - u)(c - v)(a - c u)(a - c v)} {c^2(a - u)(a
- v)(1 - u)(1 - v)}\sum_{n=1}^{N} \frac{(y,z,q^{-N};q)_n }
{\left(c q,\displaystyle{\frac{a q}{c},\frac{y z q^{-N}}{a}};q\right)_n}\,q^n
=\frac{a  (c-a) (1-c) (1-y)
   (1-z)\left(1-q^N\right) u v}{c (a-u)  (a-v) (1-u)(1-v) \left(y z-a q^N\right)}\\
   +
\frac{\left(\displaystyle{\frac{a q}{y},\frac{a q}{z}};q\right)_{N}} {\left(a
q,\displaystyle{\frac{a q}{y z}};q\right)_{N}}
\sum_{n=0}^{N} \frac{\left(q\sqrt{a},-q\sqrt{a},a,
\displaystyle{\frac{a}{c},c,qu,\frac{qa}{u},qv,\frac{qa}{v}},y,z, q^{-N};q\right)_{n}}
 {\left(\sqrt{a},-\sqrt{a},c q,\displaystyle{\frac{qa}{c},u,\frac{a}{u},v,\frac{a}{v},\frac{a q}{y},\frac{a q}{z}},a q^{1+N},q;q\right)_{n}}
 \left ( \frac{a q^N}{yz}\right)^n;
\end{multline}
\begin{multline}\label{Seqcase1}
1+\frac{(c - u)(c - v)(a - c u)(a - c v)} {c^2(a - u)(a
- v)(1 - u)(1 - v)}\sum_{n=1}^{\infty}\frac{(y,z;q)_n}{\left(c q,\frac{a q}{c};q\right)_n} \left (\frac{aq}{yz}\right)^n
=\frac{a  (c-a) (1-c) (1-y)
   (1-z) u v}{c (a-u)  (a-v) (1-u)(1-v) y z}\\
   +
\frac{\left(\displaystyle{\frac{a q}{y},\frac{a q}{z}};q\right)_{\infty}} {\left(a
q,\displaystyle{\frac{a q}{y z}};q\right)_{\infty}}
\sum_{n=0}^{\infty} \frac{\left(q\sqrt{a},-q\sqrt{a},a,
\displaystyle{\frac{a}{c},c,qu,\frac{qa}{u},qv,\frac{qa}{v}},y,z;q\right)_{n}}
 {\left(\sqrt{a},-\sqrt{a},c q,\displaystyle{\frac{qa}{c},u,\frac{a}{u},v,\frac{a}{v},\frac{a q}{y},
 \frac{a q}{z}},q;q\right)_{n}}
\left (\frac{-a}{yz}\right)^n q^{n(n-1)/2}.
\end{multline}
\end{corollary}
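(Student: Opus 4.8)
The plan is to take the Bailey pair of Theorem~\ref{t2}, specialize $b=a/c$, and feed the resulting pair into the two transformation identities \eqref{Seq1} and \eqref{wpbteq1}; the only real work is the bookkeeping forced by the piecewise form of $K_1(n)$. First I would record the specialization. When $b=a/c$ one has $aq/b=cq$, $aq/bc=q$, and $-a/bc=-1$, so \eqref{chubp1} collapses to
\[
\alpha_n=\frac{(q\sqrt a,-q\sqrt a,a,a/c,c,qu,qa/u,qv,qa/v;q)_n}{(\sqrt a,-\sqrt a,cq,qa/c,u,a/u,v,a/v,q;q)_n}(-1)^n q^{n(n-3)/2},
\]
while the factor $(aq/bc;q)_n=(q;q)_n$ cancels the $(q;q)_n$ in the denominator of $\beta_n$, leaving $\beta_n=K_1(n)/(cq,aq/c;q)_n$ with $K_1(n)=K(a,a/c,c,q^{-n},a,u,v,q)$ given by the three-case formula displayed after Theorem~\ref{t2}.

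Next I would substitute $\beta_n$ into the left side of \eqref{Seq1}. Since $K_1(n)$ equals the single value $M:=\frac{(c-u)(c-v)(a-cu)(a-cv)}{c^2(a-u)(a-v)(1-u)(1-v)}$ for all $n\ge1$ and equals $1$ at $n=0$, I would split the series as the $n=0$ term (namely $1$), plus $M\sum_{n=1}^{\infty}(y,z;q)_n(aq/yz)^n/(cq,aq/c;q)_n$, plus a lone correction coming from $K_1(1)-M$ at $n=1$. This correction carries the denominator $(cq,aq/c;q)_1=(1-cq)(1-aq/c)$ from $\beta_1$, and the decisive simplification is that the factor $(1-cq)$ and, upon writing $1-aq/c=(c-aq)/c$, the factor $(c-aq)$ present in $K_1(1)-M$ cancel against it; the residue collapses to the negative of the first summand on the right of \eqref{Seqcase1}, so transposing this single term across the identity produces that summand. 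On the right of \eqref{Seq1} the weight $(aq/yz)^n$ multiplies $\alpha_n$, and since $q^n\cdot q^{n(n-3)/2}=q^{n(n-1)/2}$ the combined phase becomes $(-a/yz)^n q^{n(n-1)/2}$, yielding exactly the series in \eqref{Seqcase1} once the numerator and denominator lists are merged.

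The finite identity \eqref{wpbteqcase1} comes out of the same splitting applied to \eqref{wpbteq1}. The one extra computation is in the $n=1$ correction, where now the factors $(y,z,q^{-N};q)_1/(yzq^{-N}/a;q)_1$ also appear; using $1-q^{-N}=-(1-q^N)/q^N$ and $1-yzq^{-N}/a=(aq^N-yz)/(aq^N)$ I would rewrite their quotient as $a(1-q^N)/(yz-aq^N)$, after which the same $(1-cq)$ and $(c-aq)$ cancellations give the stated correction term with denominator $(yz-aq^N)$. On the $\alpha_n$ side the phases telescope as $(-1)^nq^{n(n-3)/2}\cdot(-aq^N/yz)^nq^{-n(n-3)/2}=(aq^N/yz)^n$, which is precisely the summand of \eqref{wpbteqcase1}; the value $K_1(0)=1$ secures the leading $1$ in both identities.

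I expect the sole obstacle to be computational rather than conceptual: confirming that $K_1(1)-M$, after clearing the $(1-cq)(1-aq/c)$ denominator, reduces to the compact rational function displayed, which is exactly the cancellation of the $(1-cq)$ and $(c-aq)$ factors. Everything else is routine substitution, and it is the piecewise definition of $K_1(n)$---constant in $n$ except for the anomalous $n=1$ value---that is responsible for throwing one isolated term onto the opposite side of each identity.
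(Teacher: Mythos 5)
Your proposal is correct and is exactly the paper's route: the paper derives this corollary by setting $b=a/c$ in the Bailey pair of Theorem \ref{t2} and substituting the result into \eqref{Seq1} and \eqref{wpbteq1}, with the isolated rational term on each right-hand side arising precisely from the anomalous $n=1$ value of $K_1(n)$ as you describe. Your bookkeeping (the cancellation of $(1-cq)$ and $(c-aq)$, the phase computations $q^n q^{n(n-3)/2}=q^{n(n-1)/2}$ and $(-1)^n(-aq^N/yz)^n q^{-n(n-3)/2}q^{n(n-3)/2}=(aq^N/yz)^n$, and the rewriting of $(1-q^{-N})/(1-yzq^{-N}/a)$ as $a(1-q^N)/(yz-aq^N)$) checks out and supplies the details the paper leaves implicit.
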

Note that setting $c=a$ in \eqref{wpbteqcase1} gives the
$q$-Pfaff-Saalsch\"{u}tz sum (see \cite[page 355, II.12]{GR04}),
while setting $c=u$  gives the identity (for $N\geq 0$)
{\allowdisplaybreaks
\begin{equation}\label{wpbteqcase1a}
\sum_{n=0}^{N} \frac{\left(q\sqrt{a},-q\sqrt{a},a,
\displaystyle{qv,\frac{qa}{v}},y,z, q^{-N};q\right)_{n}}
 {\left(\sqrt{a},-\sqrt{a},v,\displaystyle{\frac{a}{v},\frac{a q}{y},\frac{a q}{z}},a q^{1+N},q;q\right)_{n}}
 \left ( \frac{a q^N}{yz}\right)^n
=\left [ 1 + \frac{a   (1-y)
   (1-z)\left(1-q^N\right)  v}{  (a-v) (1-v) \left(y z-a q^N\right)} \right ]
\frac{\left(a
q,\displaystyle{\frac{a q}{y z}};q\right)_{N}}{\left(\displaystyle{\frac{a q}{y},\frac{a q}{z}};q\right)_{N}}.
\end{equation}
}

\subsection{ Mod $3$ Bailey pairs}

We continue to follow in Slater's footsteps,  this time making the same substitutions in
Chu's identity \eqref{chueq1} that she did in \eqref{baileyeq1} to produce the Bailey pairs in her \textbf{A} table. Let $K$ be  as defined at \eqref{Keq}.
\begin{theorem}\label{t23}
(i) Let $K_0(n):=K(a,q^{-n},q^{1-n},q^{2-n},a,u,v,q^3)$. Then the
pair of sequences $(\alpha_n, \beta_n)$ is a Bailey pair with
respect to $a=a$, where $\alpha_0=\beta_0=1$, $\alpha_{3r\pm1}=0$,
and {\allowdisplaybreaks
\begin{align}\label{chubp22}
\alpha_{3r}&=  \frac{1-a q^{6r}}{1-a}
\frac{\left(\displaystyle{a,q^3u,q^3v,a q^3/u,a q^3/v;q^3}\right)_r}
   {\left(\displaystyle{u,v,a/u,a/v,q^3;q^3}\right)_r}q^{\frac{9r^2-15r}{2}}(-a)^r\\
\beta_n&= K_0(n) \frac{(a ;q^3)_{n}}{(a;q)_{2n}(q;q)_n}.\notag
\end{align}
} Let $K_0'(n):=K(aq,q^{-n},q^{1-n},q^{2-n},aq,u,v,q^3)$. Then\\
(ii)  the pair of sequences $(\alpha_n, \beta_n)$ is a Bailey pair
with respect to $a=a$, where $\alpha_0=\beta_0=1$,
$\alpha_{3r-1}=0$, and {\allowdisplaybreaks
\begin{align}\label{chubp22a}
\alpha_{3r}&= \frac{\left(\displaystyle{a q,q^3u,q^3v,a q^4/u,a
q^4/v;q^3}\right)_r}
   {\left(\displaystyle{u,v,a q/u,a q/v,q^3;q^3}\right)_r}q^{\frac{9r^2-13r}{2}}(-a)^r\\
\alpha_{3r+1}&= \frac{\left(\displaystyle{a q,q^3u,q^3v,a q^4/u,a
q^4/v;q^3}\right)_r}
   {\left(\displaystyle{u,v,a q/u,a q/v,q^3;q^3}\right)_r}q^{\frac{9r^2-r}{2}+1}(-a)^{r+1}\notag\\
   \beta_n&= K_0'(n) \frac{(aq ;q^3)_{n}}{(aq;q)_{2n}(q;q)_n};\notag
\end{align}
} (iii)  the pair of sequences $(\alpha_n, \beta_n)$ is a Bailey
pair with respect to $a=a$, where $\alpha_0=\beta_0=1$,
$\alpha_{3r-1}=0$, and {\allowdisplaybreaks
\begin{align}\label{chubp22b}
\alpha_{3r}&= \frac{\left(\displaystyle{a q,q^3u,q^3v,a q^4/u,a
q^4/v;q^3}\right)_r}
   {\left(\displaystyle{u,v,a q/u,a q/v,q^3;q^3}\right)_r}q^{\frac{9r^2-7r}{2}}(-a)^r\\
\alpha_{3r+1}&= -\frac{\left(\displaystyle{a q,q^3u,q^3v,a q^4/u,a
q^4/v;q^3}\right)_r}
   {\left(\displaystyle{u,v,a q/u,a q/v,q^3;q^3}\right)_r}q^{\frac{9r^2-7r}{2}}(-a)^{r}\notag\\
   \beta_n&= K_0'(n) \frac{(aq ;q^3)_{n}q^n}{(aq;q)_{2n}(q;q)_n}.\notag
\end{align}
}
\end{theorem}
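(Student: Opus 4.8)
The plan is to treat all three parts as variants of the argument already used for Theorem~\ref{t2}, the only genuinely new ingredient being a change of base from $q$ to $q^3$. Concretely, in Chu's identity \eqref{chueq1} I would replace $q$ throughout by $q^3$, set the parameter $e$ equal to the first argument of $K$ (so that every term of negative index in the bilateral sum vanishes, exactly as in Theorem~\ref{t2}), and then specialise $b=q^{-n}$, $c=q^{1-n}$, $d=q^{2-n}$. The decisive observation is the collapse
\[
(q^{-n},q^{1-n},q^{2-n};q^3)_j=(q^{-n};q)_{3j},
\]
together with the companion identity $(aq^{1+n},aq^{2+n},aq^{3+n};q^3)_j=(aq^{1+n};q)_{3j}$ for the well-poised denominator parameters, which simultaneously truncates the base-$q^3$ sum at $3j\le n$ and converts a product of three base-$q^3$ Pochhammers into a single base-$q$ Pochhammer of tripled index. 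This is precisely what transplants a base-$q^3$ summation into a base-$q$ Bailey pair whose $\alpha$-sequence lives on the residue class $0 \pmod 3$.

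For part (i) I would take the first and fifth arguments of $K$ both equal to $a$. After the substitutions above, the left-hand side of \eqref{chueq1} is a finite sum whose $j$-th term, upon rewriting $(q^{-n};q)_{3j}$ by means of \eqref{q-njeq}, can be matched directly against the defining relation \eqref{bpeq}: the summand becomes $\alpha_{3j}/[(q;q)_{n-3j}(aq;q)_{n+3j}]$ and the product on the right of \eqref{chueq1} becomes $\beta_n$. Reading off the summand should produce the very-well-poised factor $(1-aq^{6r})/(1-a)$, the ratios of $q^3$-shifted Pochhammers in $u,v$, and the power $q^{(9r^2-15r)/2}(-a)^r$ recorded in \eqref{chubp22}, while the product side collapses to $K_0(n)\,(a;q^3)_n/[(a;q)_{2n}(q;q)_n]$. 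Since $\alpha_{3r\pm1}=0$ automatically (these indices never occur) and $K_0(0)=1$, the normalisation $\alpha_0=\beta_0=1$ holds, and convergence is not an issue as the sum is finite.

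For parts (ii) and (iii) I would run the identical computation but with the first and fifth arguments of $K$ both equal to $aq$ rather than $a$. The well-poised factor then emerges as $(1-aq^{1+6j})/(1-aq)$, so the argument produces, verbatim, a Bailey pair \emph{relative to $aq$} whose $\alpha$-sequence is again supported on the multiples of $3$ and whose $\beta$-sequence is $K_0'(n)\,(aq;q^3)_n/[(aq;q)_{2n}(q;q)_n]$. To reach the two pairs relative to $a$ claimed in \eqref{chubp22a} and \eqref{chubp22b}, I would then apply the two standard elementary relations converting a Bailey pair relative to $aq$ into one relative to $a$: one leaves $\beta_n$ unchanged and the other replaces $\beta_n$ by $q^n\beta_n$, and in each case the new $\alpha_n$ is a two-term combination of the old $\alpha$ at indices $n$ and $n-1$. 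It is exactly this two-term combination that redistributes the support from $\{3r\}$ onto $\{3r,3r+1\}$, yielding the companion entries $\alpha_{3r+1}=-aq^{6r+1}\alpha_{3r}$ in part (ii) and $\alpha_{3r+1}=-\alpha_{3r}$ in part (iii); the $\beta$-preserving relation produces (ii) and the $q^n$-scaling relation produces (iii).

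The main obstacle is bookkeeping rather than conceptual. The delicate points are: verifying that the triple base change and the reversal \eqref{q-njeq} really do produce the precise quadratic exponents $(9r^2-15r)/2$, $(9r^2-13r)/2$, $(9r^2-7r)/2$ together with the powers $(-a)^r$ as written; confirming that the product side of \eqref{chueq1} reassembles into the stated closed forms built from $K_0$ and $K_0'$ (here, as the paper already notes for $K_1$, a computer algebra check is the practical route, and one should examine the small-index behaviour of $K_0$ and $K_0'$ at $n=0,1$ separately); and checking that the two $aq\to a$ conversions spread the support and rescale the $q$-powers in exactly the way needed to match \eqref{chubp22a} and \eqref{chubp22b}, including the overall sign and the index shift producing $\alpha_{3r+1}$. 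Once these constants are pinned down, the three Bailey-pair assertions follow from \eqref{bpeq} with $\alpha_0=\beta_0=1$ in each case.
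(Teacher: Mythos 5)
Your proposal is correct and follows essentially the same route as the paper: part (i) is proved exactly as you describe (set $e=a$, replace $q$ by $q^3$, specialize $b=q^{-n}$, $c=q^{1-n}$, $d=q^{2-n}$, apply \eqref{q-njeq} and read off the pair from \eqref{bpeq}), and for parts (ii) and (iii) the paper likewise replaces $a$ by $aq$ in the intermediate identity \eqref{t3eq} and then splits each term over the indices $3r$ and $3r+1$. The two ``standard $aq\to a$ conversion relations'' you invoke (one preserving $\beta_n$, one replacing $\beta_n$ by $q^n\beta_n$, each making the new $\alpha$ a two-term combination of the old $\alpha$ at consecutive indices) are precisely the two term-splitting identities the paper writes out explicitly, and they do yield $\alpha_{3r+1}=-aq^{6r+1}\alpha_{3r}$ in (ii) and $\alpha_{3r+1}=-\alpha_{3r}$ in (iii), matching \eqref{chubp22a} and \eqref{chubp22b}.
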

\begin{proof}
Set $e=a$ in \eqref{chueq1}, so that all the terms of negative
index vanish. Then replace $q$ with $q^3$, set $b=q^{-n}$,
$c=q^{1-n}$ and $d=q^{2-n}$.  Then after some simple
manipulations  \eqref{chueq1} becomes
\begin{equation*}
\sum_{r=0}^{n/3}\frac{1-a q^{6r}}{1-a}
\frac{(q^{-n};q)_{3r}}{(a q^{n+1};q)_{3r}}
\frac{\left(\displaystyle{a,q^3u,q^3v,aq^3/u,aq^3/v;q^3}\right)_r}
{\left(\displaystyle{u,v,a/u,a/v,q^3;q^3}\right)_r}\left(a q^{3n-6}
\right)^r
= K_0(n) \frac{(a;q^3)_{n}(a q;q)_n}{(a;q)_{2n}}.
\end{equation*}
Apply \eqref{q-njeq} to the $(q^{-n};q)_{3r}$ factor, divide both
sides by $(aq;q)_n(q;q)_n$ to get
{\allowdisplaybreaks
\begin{equation}\label{t3eq}
\sum_{r=0}^{n/3}\frac{1-a q^{6r}}{1-a} \frac{(-1)^r
q^{(9r^2-15r)/2}}{(a q;q)_{n+3r}(q;q)_{n-3r}}
\frac{\left(\displaystyle{a,q^3u,q^3v,aq^3/u,aq^3/v;q^3}\right)_r}
{\left(\displaystyle{u,v,a/u,a/v,q^3;q^3}\right)_r}a^r
= K_0(n) \frac{(a;q^3)_{n}}{(a;q)_{2n}(q;q)_n}
\end{equation}
}and \eqref{chubp22} follows.

For the other two pairs, replace $a$ with $aq$ in \eqref{t3eq}, and
\eqref{chubp22a} follows from the identity
\begin{equation*}
\frac{(1-a q^{6r+1}) q^{(9r^2-13r)/2}}{(a
q;q)_{n+3r+1}(q;q)_{n-3r}} =\frac{ q^{(9r^2-13r)/2}}{(a
q;q)_{n+3r}(q;q)_{n-3r}} -\frac{ a\,q^{(9r^2-r)/2+1}}{(a
q;q)_{n+3r+1}(q;q)_{n-3r-1}},
\end{equation*}
while \eqref{chubp22b} follows from the identity
\begin{equation*}
\frac{(1-a q^{6r+1}) q^{(9r^2-13r)/2}}{(a q;q)_{n+3r+1}(q;q)_{n-3r}}
=\frac{q^{-n} q^{(9r^2-7r)/2}}{(a q;q)_{n+3r}(q;q)_{n-3r}} -\frac{
q^{-n}q^{(9r^2-7r)/2}}{(a q;q)_{n+3r+1}(q;q)_{n-3r-1}}.
\end{equation*}
\end{proof}

\begin{remark} $K_{0}(0)=K_{0}(1)=K_{0}(2)=1$. \end{remark}

\begin{theorem}\label{t3}
Let $K_2(n):=K(q,q^{-n},q^{1-n},q^{2-n},e,u,v,q^3)$ and suppose $e
\not = 1/q$. Then\\
(i) the pair of sequences $(\alpha_n, \beta_n)$ is a Bailey pair
with respect to $a=1$, where $\alpha_0=\beta_0=1$ and
{\allowdisplaybreaks
\begin{align}\label{chubp3}
\alpha_{3r}&=
(-1)^r \bigg(q^{\frac{9r^2-11r}{2}}
\frac{\left(\displaystyle{e,q^3u,q^3v,q^4/u,q^4/v;q^3}\right)_r}
   {\left(\displaystyle{q^4/e,u,v,q/u,q/v;q^3}\right)_r\,\displaystyle{e^r}}
+q^{\frac{9r^2-5r}{2}}
\frac{\left(\displaystyle{e/q,q^2u,q^2v,q^3/u,q^3/v;q^3}\right)_r}
   {\left(\displaystyle{q^3/e,u/q,v/q,1/u,1/v;q^3}\right)_r\,\displaystyle{e^r}}\bigg),\\
\alpha_{3r+1}&=(-1)^{r+1} q^{\frac{9r^2+r}{2}+1}\frac{\left(\displaystyle{e,q^3u,q^3v,q^4/u,q^4/v;q^3}\right)_r}
   {\left(\displaystyle{q^4/e,u,v,q/u,q/v;q^3}\right)_r\displaystyle{e^r}},\notag\\
\alpha_{3r-1}&=(-1)^{r+1}q^{\frac{9r^2-17r}{2}+1}
\frac{\left(\displaystyle{e/q,q^2u,q^2v,q^3/u,q^3/v;q^3}\right)_r}
{\left(\displaystyle{q^3/e,u/q,v/q,1/u,1/v;q^3}\right)_r\displaystyle{e^r}},\notag
\\
\beta_n&= K_2(n) \frac{(q^2/e;q^3)_n}{(q;q)_{2n}(q^2/e;q)_n};\notag
\end{align}
} (ii) the pair of sequences $(\alpha_n, \beta_n)$ is a Bailey pair
with respect to $a=1$, where $\alpha_0=\beta_0=1$ and
{\allowdisplaybreaks
\begin{align}\label{chubp5}
\alpha_{3r}&= (-1)^r \bigg(q^{\frac{9r^2-5r}{2}}
\frac{\left(\displaystyle{e,q^3u,q^3v,q^4/u,q^4/v;q^3}\right)_r}
   {\left(\displaystyle{q^4/e,u,v,q/u,q/v;q^3}\right)_r\,\displaystyle{e^r}}
+q^{\frac{9r^2-11r}{2}}
\frac{\left(\displaystyle{e/q,q^2u,q^2v,q^3/u,q^3/v;q^3}\right)_r}
   {\left(\displaystyle{q^3/e,u/q,v/q,1/u,1/v;q^3}\right)_r\,\displaystyle{e^r}}\bigg),\\
\alpha_{3r+1}&=(-1)^{r+1}
q^{\frac{9r^2-5r}{2}}\frac{\left(\displaystyle{e,q^3u,q^3v,q^4/u,q^4/v;q^3}\right)_r}
   {\left(\displaystyle{q^4/e,u,v,q/u,q/v;q^3}\right)_r\displaystyle{e^r}},\notag\\
\alpha_{3r-1}&=(-1)^{r+1}q^{\frac{9r^2-11r}{2}}
\frac{\left(\displaystyle{e/q,q^2u,q^2v,q^3/u,q^3/v;q^3}\right)_r}
{\left(\displaystyle{q^3/e,u/q,v/q,1/u,1/v;q^3}\right)_r\displaystyle{e^r}},\notag
\\
\beta_n&= K_2(n) \frac{(q^2/e;q^3)_n
q^n}{(q;q)_{2n}(q^2/e;q)_n},\notag
\end{align}
}
(iii) the pair of sequences $(\alpha_n, \beta_n)$ is a Bailey pair
with respect to $a=q$, where $\alpha_0=\beta_0=1$ and
{\allowdisplaybreaks
\begin{align}\label{chubp55}
\alpha_{3r}&= (-1)^r \frac{1-q^{6r+1}}{1-q}q^{\frac{9r^2-11r}{2}}
\frac{\left(\displaystyle{e,q^3u,q^3v,q^4/u,q^4/v;q^3}\right)_r}
   {\left(\displaystyle{q^4/e,u,v,q/u,q/v;q^3}\right)_r\,\displaystyle{e^r}}&\\
\alpha_{3r+1}&=0,\notag\\
\alpha_{3r-1}&=(-1)^{r+1}\frac{1-q^{6r-1}}{1-q}q^{\frac{9r^2-17r}{2}+1}
\frac{\left(\displaystyle{e/q,q^2u,q^2v,q^3/u,q^3/v;q^3}\right)_r}
{\left(\displaystyle{q^3/e,u/q,v/q,1/u,1/v;q^3}\right)_r\displaystyle{e^r}},\notag
\\
\beta_n&= K_2(n) \frac{(q^2/e;q^3)_n}{(q;q)_{2n}(q^2/e;q)_n},\notag
\end{align}
}

\end{theorem}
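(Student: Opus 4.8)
The plan is to substitute into Chu's identity \eqref{chueq1} exactly as Slater did into the $_6\psi_6$ sum to build her \textbf{A}-table, but retaining the two extra free parameters $u,v$, just as in Theorem \ref{t23}. Concretely, I would replace $q$ by $q^3$ throughout \eqref{chueq1}, set the Chu-parameter $a$ equal to $q$, and then put $b=q^{-n}$, $c=q^{1-n}$, $d=q^{2-n}$ while leaving $e,u,v$ free. Because $e\neq a=q$ here (in contrast to Theorem \ref{t23}, where $e=a$ killed the lower half of the bilateral sum), both series on the left-hand side of \eqref{chueq1} survive, and this is precisely what will generate nonzero $\alpha$'s in all three residue classes modulo $3$. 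The first fact to record is that, with base $q^3$, the three geometric parameters $q^{-n},q^{1-n},q^{2-n}$ coalesce: their $q^3$-rising factorials of length $r$ multiply to the single base-$q$ symbol $(q^{-n};q)_{3r}$ in the top series and $(q^{-n-1};q)_{3r}$ in the bottom series, while the matching denominators collapse to $(q^{n+2};q)_{3r}$ and $(q^{n+1};q)_{3r}$ respectively.

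Next I would apply \eqref{q-njeq} to these terminating symbols and extract the common factor $C:=(q;q)_n(q;q)_{n+1}/(1-q)$. A short computation shows the powers of $q^n$ cancel except through the surviving $(q;q)$-quotients, leaving the left side of \eqref{chueq1} equal to $C$ times
\[
\sum_{r\ge0}\frac{(1-q^{6r+1})(-1)^r q^{(9r^2-11r)/2}A_r}{(q;q)_{n-3r}(q;q)_{n+3r+1}}\;-\;q\sum_{r\ge1}\frac{(1-q^{6r-1})(-1)^r q^{(9r^2-17r)/2}B_r}{(q;q)_{n-3r+1}(q;q)_{n+3r}},
\]
where $A_r,B_r$ denote the two products of $q^3$-shifted factors in $e,u,v$ displayed in \eqref{chubp3}. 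In parallel I would simplify the product on the right of \eqref{chueq1}: it telescopes (the hypothesis $e\neq1/q$ is exactly what prevents a degenerate cancellation between the numerator factor $(q^4;q^3)_\infty$ and the denominator factor $(q^3/e;q^3)_\infty$, which coincide when $e=1/q$) to $C\cdot(q^2/e;q^3)_n/\big((q;q)_{2n}(q^2/e;q)_n\big)$, i.e.\ to $C$ times the claimed $\beta_n/K_2(n)$. Dividing through by $C$ then identifies the displayed double sum with $\beta_n$.

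The three parts differ only in how one rewrites each summand's denominator so as to read off a Bailey pair via \eqref{bpeq}. For part (iii), relative to $a=q$, I would not split at all: writing $(q;q)_{n+3r+1}=(1-q)(q^2;q)_{n+3r}$ and $(q;q)_{n+3r}=(1-q)(q^2;q)_{n+3r-1}$ places each term at a single index ($j=3r$ from the top sum, $j=3r-1$ from the bottom), with the factor $(1-q^{6r\pm1})/(1-q)$ absorbed into $\alpha$; this yields \eqref{chubp55} and $\alpha_{3r+1}=0$. For part (i), relative to $a=1$, I would split each summand by the elementary identity $\frac{1-q^{6r+1}}{(q;q)_{n-3r}(q;q)_{n+3r+1}}=\frac{1}{(q;q)_{n-3r}(q;q)_{n+3r}}-\frac{q^{6r+1}}{(q;q)_{n-3r-1}(q;q)_{n+3r+1}}$ and its companion with $6r-1$ in place of $6r+1$ (indices shifted by one); the two pieces of the top sum feed $\alpha_{3r}$ and $\alpha_{3r+1}$, those of the bottom sum feed $\alpha_{3r-1}$ and $\alpha_{3r}$, reproducing \eqref{chubp3}. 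For part (ii), again relative to $a=1$, I would instead use the $q^{-n}$-weighted split $\frac{1-q^{6r+1}}{(q;q)_{n-3r}(q;q)_{n+3r+1}}=q^{3r-n}\big[\frac{1}{(q;q)_{n-3r}(q;q)_{n+3r}}-\frac{1}{(q;q)_{n-3r-1}(q;q)_{n+3r+1}}\big]$, verified just as the corresponding identity in the proof of Theorem \ref{t23}; its factored $q^{-n}$ migrates out of $\alpha$ and into $\beta_n$, producing the extra $q^n$ in \eqref{chubp5} while the index pieces give the stated $\alpha$'s.

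The routine part is the algebra of combining two finite products of $q$-powers. The delicate part, and the main obstacle, is the bookkeeping of the half-integer exponents $(9r^2-11r)/2,(9r^2-17r)/2,\dots$ together with the signs, so that the three families assemble into exactly \eqref{chubp3}, \eqref{chubp5} and \eqref{chubp55}; and especially the verification that the infinite product on the right of \eqref{chueq1} collapses to the compact $\beta_n$, which is the step most prone to error. Finally one checks $\alpha_0=\beta_0=1$, consistent with $K_2(0)=1$.
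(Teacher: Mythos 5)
Your proposal is correct and takes essentially the same route as the paper's proof: the identical specialization of \eqref{chueq1} (replace $q$ by $q^3$, set $a=q$, $b=q^{-n}$, $c=q^{1-n}$, $d=q^{2-n}$ with $e,u,v$ free), reduction via \eqref{q-njeq}, and the same three treatments of the factors $1-q^{6r\pm1}$ --- your two splitting identities are exactly the paper's \eqref{6rt1} and \eqref{6rt2} with the denominators written in, and your ``no split, absorb $1-q$'' device for part (iii) is the paper's factorization $(q;q)_{n+3r+1}=(1-q)(q^2;q)_{n+3r}$. The only cosmetic differences are that you work directly with the two unilateral halves of Chu's bilateral sum rather than the paper's single bilateral sum converted by \eqref{binneg}, and that your parenthetical reason for the hypothesis $e\neq 1/q$ (a cancellation of infinite products) is not the paper's actual reason, which is that $K_2(0)$, and hence $\beta_0$, becomes indeterminate at $e=1/q$.
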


\begin{remark}
 The condition $e\not = 1/q$ is necessary to ensure that $\beta_0=1$.
\end{remark}

\begin{proof}
Replace $q$ with $q^3$ in \eqref{chueq1}, and then set $a=q$, $b=q^{-n}$, $c=q^{1-n}$ and $d=q^{2-n}$. One easily checks that the right side simplifies to give
\[
K_2(n)\frac{(q,q^2;q)_n(q^2/e;q^3)_n}{(q;q)_{2n}(q^2/e;q)_n}.
\]

On the series side, the choices for the parameters force the series
to terminate above and below, and we get, after some elementary manipulations, that the series becomes
\[
\sum_{r=-(n+1)/3}^{n/3}\frac{1-q^{6r+1}}{1-q}
\frac{(q^{-n};q)_{3r}}
{(q^{n+2};q)_{3r}}
\frac{\left(\displaystyle{e,q^3u,q^3v,q^4/u,q^4/v;q^3}\right)_r}
{\left(\displaystyle{q^4/e,u,v,q/u,q/v;q^3}\right)_r}
\left(\frac{q^{3n-4}}{e}\right)^r.
\]

Next, we apply \eqref{q-njeq} to the $(q^{-n};q)_{3r}$ factor above and rearrange terms to get
\begin{equation*}
\sum_{r=-\frac{n+1}{3}}^{n/3}
\frac{(1-q^{6r+1})(-1)^rq^{(9r^2-11r)/2}}
{(q;q)_{n+3r+1}(q;q)_{n-3r}\, e^r}\frac{\left(\displaystyle{e,q^3u,q^3v,q^4/u,q^4/v;q^3}\right)_r}
{\left(\displaystyle{q^4/e,u,v,q/u,q/v;q^3}\right)_r}\\
= \frac{K_2(n)(q^2/e;q^3)_n}{(q;q)_{2n}(q^2/e;q)_n}.
\end{equation*}
Noting that, for arbitrary non-zero $y$ and $z$,
\begin{equation}\label{binneg}
\frac{(y;q)_{-r}}{(z;q)_{-r}} = \frac{(q/z;q)_{r}z^{r}}{(q/y;q)_{r}y^{r}},
\end{equation}
we get that
{\allowdisplaybreaks
\begin{multline*}
K_2(n)\frac{(q^2/e;q^3)_n}{(q;q)_{2n}(q^2/e;q)_n}\\
= \sum_{r=0}^{n/3} \frac{(1-q^{6r+1})(-1)^rq^{(9r^2-11r)/2}}
{(q;q)_{n+3r+1}(q;q)_{n-3r} e^r}
\frac{\left(\displaystyle{e,q^3u,q^3v,q^4/u,q^4/v;q^3}\right)_r}
{\left(\displaystyle{q^4/e,u,v,q/u,q/v;q^3}\right)_r}\phantom{asdaDSasdSDasdsaasda}
\\
 +
\sum_{r=1}^{\frac{n+1}{3}}
\frac{(1-q^{-6r+1})(-1)^rq^{(9r^2-5r)/2}}
{(q;q)_{n-3r+1}(q;q)_{n+3r} e^r}
\frac{\left(\displaystyle{e/q,q^2u,q^2v,q^3/u,q^3/v;q^3}\right)_r}
   {\left(\displaystyle{q^3/e,u/q,v/q,1/u,1/v;q^3}\right)_r}\\
=\sum_{r=0}^{n/3}\left(
\frac{q^{(9r^2-11r)/2}}
{(q;q)_{n+3r}(q;q)_{n-3r}} - \frac{q^{(9r^2+r)/2+1}}
{(q;q)_{n+3r+1}(q;q)_{n-3r-1}}\right)
(-1)^r
\frac{\left(\displaystyle{e,q^3u,q^3v,q^4/u,q^4/v;q^3}\right)_r}
{\left(\displaystyle{q^4/e,u,v,q/u,q/v;q^3}\right)_r}
\\
 +
\sum_{r=1}^{\frac{(n+1)}{3}}\left(
\frac{q^{(9r^2-5r)/2}}
{(q;q)_{n-3r}(q;q)_{n+3r}} - \frac{q^{(9r^2-17r)/2+1}}
{(q;q)_{n-3r+1}(q;q)_{n+3r-1}}
\right)
(-1)^r
\frac{\left(\displaystyle{e/q,q^2u,q^2v,q^3/u,q^3/v;q^3}\right)_r}
   {\left(\displaystyle{q^3/e,u/q,v/q,1/u,1/v;q^3}\right)_r},
\end{multline*}
}
where the last equality follows from the identities
\begin{align}\label{6rt1}
(1-q^{6r+1})q^{\frac{9r^2-11r}{2}}&=q^{\frac{9r^2-11r}{2}}(1-q^{n+3r+1})
-q^{\frac{9r^2+r}{2}+1}(1-q^{n-3r}),\\
(1-q^{-6r+1})q^{\frac{9r^2-5r}{2}}&=q^{\frac{9r^2-5r}{2}}(1-q^{n-3r+1})
-q^{\frac{9r^2-17r}{2}+1}(1-q^{n+3r}).\notag
\end{align}
That \eqref{chubp3} gives a Bailey pair now follows from the
definition of a Bailey pair at \eqref{bpeq}, also noting that
$K_2(0)=K_2(1)=1$.

If, instead of using the identities at \eqref{6rt1}, we use the
identities
\begin{align}\label{6rt2}
(1-q^{6r+1})q^{\frac{9r^2-11r}{2}}
&=q^{\frac{9r^2-5r}{2}-n}((1-q^{n+3r+1})
-(1-q^{n-3r})),\\
(1-q^{-6r+1})q^{\frac{9r^2-5r}{2}}
&=q^{\frac{9r^2-11r}{2}-n}((1-q^{n-3r+1}) -(1-q^{n+3r})),\notag
\end{align}
we get that the pair at \eqref{chubp5} is a Bailey pair.

The proof of \eqref{chubp55} follows  from the right side of the
first equality following \eqref{binneg}, upon setting
$(q;q)_{n+3r+1}$ $(q;q)_{n-3r}=(1-q)(q^2;q)_{n+3r}(q;q)_{n-3r}$ in
the first sum and
$(q;q)_{n-3r+1}(q;q)_{n+3r}=(1-q)(q;q)_{n-3r+1}(q^2;q)_{n+3r-1}$ in
the second sum.
\end{proof}

If we begin by setting $a=q^2$ instead of $a=q$, but keep the same
choices $b=q^{-n}$, $c=q^{1-n}$ and $d=q^{2-n}$ in \eqref{chueq1}, then we get
Theorems \ref{t5}  following.

\begin{theorem}\label{t5}
Let  $K_3(n):=K(q^2,q^{-n},q^{1-n},q^{2-n},e,u,v,q^3)$. Then \\
(i) the pair of sequences $(\alpha_n, \beta_n)$ is a Bailey pair
with respect to $a=q$, where $\alpha_0=\beta_0=1$ and
{\allowdisplaybreaks
\begin{align}\label{chubp7}
\alpha_{3r}&=
 (-1)^rq^{\frac{9r^2-7r}{2}}
\frac{\left(\displaystyle{e,q^3u,q^3v,q^5/u,q^5/v;q^3}\right)_r }
   {\left(\displaystyle{q^5/e,u,v,q^2/u,q^2/v;q^3}\right)_r\,\displaystyle{e^r}},\\
\alpha_{3r+1}&= (-1)^{r+1} q^{\frac{9r^2+5r}{2}+2}
\frac{\left(\displaystyle{e,q^3u,q^3v,q^5/u,q^5/v;q^3}\right)_r}
   {\left(\displaystyle{q^5/e,u,v,q^2/u,q^2/v;q^3}\right)_r\,\displaystyle{e^r}}
  +  (-1)^{r}q^{\frac{9r^2-r}{2}-3}
\frac{\left(\displaystyle{e/q^2,qu,qv,q^3/u,q^3/v;q^3}\right)_{r+1}}
{\left(\displaystyle{q^3/e,u/q^2,v/q^2,1/u,1/v;q^3}\right)_{r+1}\displaystyle{e^{r+1}}},\notag\\
\alpha_{3r-1}&= (-1)^{r}q^{\frac{9r^2-7r}{2}}
\frac{\left(\displaystyle{e/q^2,qu,qv,q^3/u,q^3/v;q^3}\right)_{r}}
{\left(\displaystyle{q^3/e,u/q^2,v/q^2,1/u,1/v;q^3}\right)_{r}\displaystyle{e^r}},\notag
\\
\beta_n&= K_3(n)
\frac{(q^4/e;q^3)_n}{(q^2;q)_{2n}(q^3/e;q)_n};\notag
\end{align}
} (ii) the pair of sequences $(\alpha_n, \beta_n)$ is a Bailey pair
with respect to $a=q$, where $\alpha_0=\beta_0=1$ and
{\allowdisplaybreaks
\begin{align}\label{chubp8}
\alpha_{3r}&= (-1)^r q^{\frac{9r^2-r}{2}}
\frac{\left(\displaystyle{e,q^3u,q^3v,q^5/u,q^5/v;q^3}\right)_r}
   {\left(\displaystyle{q^5/e,u,v,q^2/u,q^2/v;q^3}\right)_r\,\displaystyle{e^r}},\\
\alpha_{3r+1}&= (-1)^{r+1} q^{\frac{9r^2-r}{2}}
\frac{\left(\displaystyle{e,q^3u,q^3v,q^5/u,q^5/v;q^3}\right)_r}
   {\left(\displaystyle{q^5/e,u,v,q^2/u,q^2/v;q^3}\right)_r\,\displaystyle{e^r}}
+  (-1)^{r}q^{\frac{9r^2+5r}{2}-2}
\frac{\left(\displaystyle{e/q^2,qu,qv,q^3/u,q^3/v;q^3}\right)_{r+1}}
{\left(\displaystyle{q^3/e,u/q^2,v/q^2,1/u,1/v;q^3}\right)_{r+1}\displaystyle{e^{r+1}}},\notag\\
\alpha_{3r-1}&= (-1)^{r}q^{\frac{9r^2-13r}{2}}
\frac{\left(\displaystyle{e/q^2,qu,qv,q^3/u,q^3/v;q^3}\right)_{r}}
{\left(\displaystyle{q^3/e,u/q^2,v/q^2,1/u,1/v;q^3}\right)_{r}\displaystyle{e^r}},\notag
\\
\beta_n&= K_3(n) \frac{(q^4/e;q^3)_n
q^n}{(q^2;q)_{2n}(q^3/e;q)_n}.\notag
\end{align}
}
(iii) the pair of sequences $(\alpha_n, \beta_n)$ is a Bailey pair
with respect to $a=q^2$, where $\alpha_0=\beta_0=1$, and
{\allowdisplaybreaks
\begin{align}\label{chubp77}
\alpha_{3r}&=
 (-1)^r \frac{1-q^{6r+2}}{1-q^2}q^{\frac{9r^2-7r}{2}}
\frac{\left(\displaystyle{e,q^3u,q^3v,q^5/u,q^5/v;q^3}\right)_r }
   {\left(\displaystyle{q^5/e,u,v,q^2/u,q^2/v;q^3}\right)_r\,\displaystyle{e^r}},\\
\alpha_{3r+1}&=   (-1)^{r}\frac{1-q^{6r+4}}{1-q^2}q^{\frac{9r^2-r}{2}-3}
\frac{\left(\displaystyle{e/q^2,qu,qv,q^3/u,q^3/v;q^3}\right)_{r+1}}
{\left(\displaystyle{q^3/e,u/q^2,v/q^2,1/u,1/v;q^3}\right)_{r+1}\displaystyle{e^{r+1}}},\notag\\
\alpha_{3r-1}&= 0,\notag
\\
\beta_n&= K_3(n)
\frac{(q^4/e;q^3)_n}{(q^2;q)_{2n}(q^3/e;q)_n};\notag
\end{align}
}
\end{theorem}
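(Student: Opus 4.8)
The plan is to imitate, almost line for line, the argument already given for Theorem~\ref{t3}, the only change being the initial specialization of Chu's formula. First I would replace $q$ by $q^3$ in \eqref{chueq1} and then set $a=q^2$, $b=q^{-n}$, $c=q^{1-n}$, $d=q^{2-n}$, leaving $e$ free. The three lower parameters $q^{-n},q^{1-n},q^{2-n}$ together fill out all residues modulo $3$, so the product $(b,c,d;q^3)_r$ collapses to the single $q$-shifted factorial $(q^{-n};q)_{3r}$, which terminates the left-hand $_{10}\psi_{10}$ above at $3r\le n$ and, through the mirror factors, below as well. On the right-hand side the infinite products telescope, after routine cancellation, to $K_3(n)$ times a ratio of $q$-factorials; dividing through by $(q;q)_n(q^3;q)_n$ then leaves exactly the claimed $\beta_n=K_3(n)\,(q^4/e;q^3)_n/\big((q^2;q)_{2n}(q^3/e;q)_n\big)$, the factor $(q^2;q)_{2n}$ here replacing the $(q;q)_{2n}$ of Theorem~\ref{t3} as a direct consequence of the choice $a=q^2$.

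Next I would rewrite the reduced bilateral sum so as to expose its two arithmetic progressions. Applying \eqref{q-njeq} to the factor $(q^{-n};q)_{3r}$ and the reciprocal rule \eqref{binneg} to the negative-index terms turns the bilateral sum into a sum of two ordinary series, over $r\ge0$ and $r\ge1$, one carrying the base-$q^3$ quotient built from $(e,q^3u,q^3v,q^5/u,q^5/v;q^3)_r$ and the other the quotient built from $(e/q^2,qu,qv,q^3/u,q^3/v;q^3)_r$. These are precisely the two families of factorials appearing in \eqref{chubp7}, \eqref{chubp8} and \eqref{chubp77}, and the governing weight is now $(1-q^{6r+2})$ (coming from $1-aq^{6r}$ with $a=q^2$) rather than the $(1-q^{6r+1})$ of Theorem~\ref{t3}.

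The three Bailey pairs then correspond to three distinct telescopings of this weight, in exact parallel with \eqref{6rt1} and \eqref{6rt2}. For parts (i) and (ii) one writes the governing term, suitably weighted by a power of $q$, as a difference of two pieces proportional to $(1-q^{n+3r+1})$ and to $(1-q^{n-3r})$; each piece cancels a factor of the denominator $(q;q)_{n+3r}(q;q)_{n-3r}$, lowering the effective modulus to yield a Bailey pair relative to $a=q$. The two telescopings differ by an overall factor $q^{-n}$, exactly as \eqref{6rt2} differs from \eqref{6rt1}, and this is what separates \eqref{chubp8} from \eqref{chubp7}. For part (iii) one instead keeps the factor $1-q^{6r+2}$ (and its companion $1-q^{6r+4}$ arising from the second branch) intact, extracting it from the denominator by the factorizations used at the end of the proof of Theorem~\ref{t3}, now pulling out a factor $1-q^2$ rather than $1-q$; this retains the modulus $a=q^2$ and produces the displayed prefactors $(1-q^{6r+2})/(1-q^2)$ and $(1-q^{6r+4})/(1-q^2)$. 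In every case one reads off $\alpha_n$ and $\beta_n$ from the canonical shape $\sum_j\alpha_j/\big((q;q)_{n-j}(aq;q)_{n+j}\big)$ of \eqref{bpeq}, checking $\alpha_0=\beta_0=1$, which holds since $K_3(0)=1$.

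The one genuinely new difficulty, compared with Theorem~\ref{t3}, is a shift in where the two branches overlap. Because $a=q^2$, the reindexing \eqref{binneg} sends the former negative-index terms not to the residue class $3r-1$ alone but also, after the factorial index is advanced to $r+1$, into the class $3r+1$; consequently $\alpha_{3r+1}$ in \eqref{chubp7} and \eqref{chubp8} must carry \emph{two} summands, one from the $(e;q^3)_r$ branch at index $r$ and one from the $(e/q^2;q^3)_{r+1}$ branch at index $r+1$, whereas $\alpha_{3r}$ and $\alpha_{3r-1}$ each receive a single contribution. Reconciling the $q$-exponents and the $(\cdot;q^3)_r$ versus $(\cdot;q^3)_{r+1}$ factorizations across this overlap is the delicate bookkeeping step, and, as is already recommended for the evaluation of $K$ itself, it is most safely confirmed with a computer algebra system.
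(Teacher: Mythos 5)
Your proposal follows the paper's proof essentially step for step: the same specialization of Chu's formula ($q\to q^3$, $a=q^2$, $b=q^{-n}$, $c=q^{1-n}$, $d=q^{2-n}$), the same splitting of the bilateral sum via \eqref{q-njeq} and \eqref{binneg}, telescoping identities parallel to \eqref{6rt1} and \eqref{6rt2} (the two versions differing by a factor $q^{-n}$) for parts (i) and (ii), extraction of the factor $1-q^2$ for part (iii), and the $r\to r+1$ re-indexing that places two summands in $\alpha_{3r+1}$. One small correction to your bookkeeping: since $a=q^2$, the relevant denominators are $(q^2;q)_{n+3r+1}(q;q)_{n-3r}$ rather than $(q;q)_{n+3r}(q;q)_{n-3r}$, so the telescoping must split off the factor $(1-q^{n+3r+2})$ rather than $(1-q^{n+3r+1})$ --- exactly as in the identities \eqref{6rt3} and \eqref{6rt4} that the paper employs.
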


\begin{proof}
The proof parallels that of Theorem \ref{t3}, except that after arriving at the identity
\begin{equation}\label{t5bl}
\sum
\frac{(1-q^{6r+2})(-1)^rq^{(9r^2-7r)/2}}
{(q^2;q)_{n+3r+1}(q;q)_{n-3r}\, e^r}\frac{\left(\displaystyle{e,q^3u,q^3v,q^5/u,q^5/v;q^3}\right)_r}
{\left(\displaystyle{q^5/e,u,v,q^2/u,q^2/v;q^3}\right)_r}\\
= \frac{K_3(n)(q^4/e;q^3)_n}{(q^2;q)_{2n}(q^3/e;q)_n},
\end{equation}
and then separating the sum into two sums ($r\geq 0$ and $r<0$) as
previously,  we instead employ the identities
\begin{align}\label{6rt3}
(1-q^{6r+2})q^{\frac{9r^2-7r}{2}}&
=q^{\frac{9r^2-7r}{2}}(1-q^{n+3r+2})
-q^{\frac{9r^2+5r}{2}+2}(1-q^{n-3r}),\\
(1-q^{-6r+2})q^{\frac{9r^2-7r}{2}}&
=q^{\frac{9r^2-7r}{2}}(1-q^{n-3r+2})
-q^{\frac{9r^2-19r}{2}+2}(1-q^{n+3r}), \notag
\end{align}
to get \eqref{chubp7}.  The result  follows as in Theorem
\ref{t3}, except that it is necessary to re-index one of the four
sums (by replacing $r$ with $r+1$).

For \eqref{chubp8} we instead use the identities
\begin{align}\label{6rt4}
(1-q^{6r+2})q^{\frac{9r^2-7r}{2}}&
=q^{\frac{9r^2-r}{2}-n}((1-q^{n+3r+2})
-(1-q^{n-3r})),\\
(1-q^{-6r+2})q^{\frac{9r^2-7r}{2}}&
=q^{\frac{9r^2-13r}{2}-n}((1-q^{n-3r+2}) -(1-q^{n+3r})).\notag
\end{align}

The proof of \eqref{chubp77} is like the proof of \eqref{chubp55} in Theorem \ref{t3},
except that after separating the sum at \eqref{t5bl} into two sums, according to $r\geq 0$ or $r<0$, and then replacing $r$ with $-r$ for the sum with $r<0$, we use the identities
$(q^2;q)_{n+3r+1}(q;q)_{n-3r}=(1-q^2)(q^3;q)_{n+3r}(q;q)_{n-3r}$  and
$(q^2;q)_{n-3r+1}(q;q)_{n-3r}=(1-q^2)(q;q)_{n-3r+2}(q^3;q)_{n+3r-2}$, and finally re-index in the latter case by replacing
$r$ with $r+1$.
\end{proof}

The nine Bailey pairs in the next corollary derive, respectively,
from the pairs in Theorems \ref{t23} - \ref{t5}, by letting $u$, $v
\to \infty$ in each case.
\begin{corollary}\label{csgen1}
The   sequences $(\alpha_n, \beta_n)$ below are  Bailey pairs with
respect to the stated values of $a$, where
{\allowdisplaybreaks
\begin{align}\label{sgen20}
\alpha_{3r}&= \frac{1-a q^{6r}}{1-a}\frac{(a;q^3)_r}{(q^3;q^3)_r}q^{\frac{9r^2-3r}{2}}(-a)^r \\
\alpha_{3r\pm1}&=0,\notag\\
\beta_n&= \frac{(a;q^3)_{n}}{(a;q)_{2n}(q;q)_n}, \,\, \text{ with
respect to $a=a$};\notag
\end{align}
}
{\allowdisplaybreaks
\begin{align}\label{sgen21}
\alpha_{3r}&= \frac{\left(\displaystyle{a q;q^3}\right)_r}
   {\left(\displaystyle{q^3;q^3}\right)_r}q^{\frac{9r^2-r}{2}}(-a)^r,\\
\alpha_{3r+1}&= \frac{\left(\displaystyle{a q;q^3}\right)_r}
   {\left(\displaystyle{q^3;q^3}\right)_r}q^{\frac{9r^2+11r}{2}+1}(-a)^{r+1},\notag\\
   \alpha_{3r-1}&=0, \notag\\
   \beta_n&= \frac{(aq ;q^3)_{n}}{(aq;q)_{2n}(q;q)_n}, \,\, \text{
with respect to $a=a$;
   }\notag
\end{align}
} {\allowdisplaybreaks
\begin{align}\label{sgen22}
\alpha_{3r}&= \frac{\left(\displaystyle{a q;q^3}\right)_r}
   {\left(\displaystyle{q^3;q^3}\right)_r}q^{\frac{9r^2+5r}{2}}(-a)^r,\\
\alpha_{3r+1}&= -\frac{\left(\displaystyle{a q;q^3}\right)_r}
   {\left(\displaystyle{q^3;q^3}\right)_r}q^{\frac{9r^2+5r}{2}}(-a)^{r},\notag\\
   \alpha_{3r-1}&=0, \notag\\
   \beta_n&=  \frac{(aq ;q^3)_{n}q^n}{(aq;q)_{2n}(q;q)_n}, \,\, \text{
with respect to $a=a$;
   }\notag
\end{align}
}
{\allowdisplaybreaks
\begin{align}\label{Sgen2}
\alpha_{3r}&=(-1)^r\left(\displaystyle{
\frac{q^{9r^2/2+r/2}(e;q^3)_r}{(q^4/e;q^3)_r e^r}+
\frac{q^{9r^2/2+7r/2}(e/q;q^3)_r}{(q^3/e;q^3)_r e^r}}\right),\\
\alpha_{3r+1}&=\displaystyle{\frac{(-1)^{r+1}q^{9r^2/2+13r/2+1}(e;q^3)_r}{(q^4/e;q^3)_r e^r}},\notag\\
\alpha_{3r-1}&=\displaystyle{\frac{(-1)^{r+1}q^{9r^2/2-5r/2+1}(e/q;q^3)_r}{(q^3/e;q^3)_r
e^r}},\notag\\
\beta_n&=\frac{(q^2/e;q^3)_n}{(q;q)_{2n}(q^2/e;q)_n}, \,\, \text{
with respect to $a=1$;
   }\notag
\end{align}
} {\allowdisplaybreaks
\begin{align}\label{Sgen3}
\alpha_{3r}&=(-1)^r\left(\displaystyle{
\frac{q^{9r^2/2+7r/2}(e;q^3)_r}{(q^4/e;q^3)_r e^r}+
\frac{q^{9r^2/2+r/2}(e/q;q^3)_r}{(q^3/e;q^3)_r e^r}}\right),\\
\alpha_{3r+1}&=\displaystyle{\frac{(-1)^{r+1}q^{9r^2/2+7r/2}(e;q^3)_r}{(q^4/e;q^3)_r e^r}},\notag\\
\alpha_{3r-1}&=\displaystyle{\frac{(-1)^{r+1}q^{9r^2/2+r/2}(e/q;q^3)_r}{(q^3/e;q^3)_r
e^r}},\notag\\
\beta_n&=\frac{q^n(q^2/e;q^3)_n}{(q;q)_{2n}(q^2/e;q)_n},\,\, \text{
with respect to $a=1$;
 }\notag
\end{align}
}
{\allowdisplaybreaks
\begin{align}\label{sgen35}
\alpha_{3r}&= (-1)^r \frac{1-q^{6r+1}}{1-q}q^{\frac{9r^2+r}{2}}
\frac{\left(\displaystyle{e;q^3}\right)_r}
   {\left(\displaystyle{q^4/e;q^3}\right)_r\,\displaystyle{e^r}}&\\
\alpha_{3r+1}&=0,\notag\\
\alpha_{3r-1}&=(-1)^{r+1}\frac{1-q^{6r-1}}{1-q}q^{\frac{9r^2-5r}{2}+1}
\frac{\left(\displaystyle{e/q;q^3}\right)_r}
{\left(\displaystyle{q^3/e;q^3}\right)_r\displaystyle{e^r}},\notag
\\
\beta_n&=  \frac{(q^2/e;q^3)_n}{(q;q)_{2n}(q^2/e;q)_n},\,\, \text{
with respect to $a=q$;
 }\notag
\end{align}
}
{\allowdisplaybreaks
\begin{align}\label{Sgen4}
\alpha_{3r}&= \displaystyle{
\frac{(-1)^r q^{9r^2/2+5r/2}(e;q^3)_r}{(q^5/e;q^3)_r e^r}},\\
\alpha_{3r+1}&=\displaystyle{\frac{(-1)^{r}q^{9r^2/2+11r/2+3}(e/q^2;q^3)_{r+1}}
{(q^3/e;q^3)_{r+1} e^{r+1}}}
+\displaystyle{\frac{(-1)^{r+1}q^{9r^2/2+17r/2+2}(e;q^3)_{r}}
{(q^5/e;q^3)_{r} e^{r}}},\notag\\
\alpha_{3r-1}&=\displaystyle{\frac{(-1)^{r}q^{9r^2/2+5r/2}(e/q^2;q^3)_r}{(q^3/e;q^3)_r
e^r}},\notag\\
\beta_n&=\frac{(q^4/e;q^3)_n}{(q^2;q)_{2n}(q^3/e;q)_n}, \,\,\text{
 with respect to $a=q$; }\notag
\end{align}
} {\allowdisplaybreaks
\begin{align}\label{Sgen5}
\alpha_{3r}&= \displaystyle{
\frac{(-1)^r q^{9r^2/2+11r/2}(e;q^3)_r}{(q^5/e;q^3)_r e^r}},\\
\alpha_{3r+1}&=\displaystyle{\frac{(-1)^{r}q^{9r^2/2+17r/2+4}(e/q^2;q^3)_{r+1}}
{(q^3/e;q^3)_{r+1} e^{r+1}}}
+\displaystyle{\frac{(-1)^{r+1}q^{9r^2/2+11r/2}(e;q^3)_{r}}
{(q^5/e;q^3)_{r} e^{r}}},\notag\\
\alpha_{3r-1}&=\displaystyle{\frac{(-1)^{r}q^{9r^2/2-r/2}(e/q^2;q^3)_r}{(q^3/e;q^3)_r
e^r}},\notag\\
\beta_n&=\frac{q^n(q^4/e;q^3)_n}{(q^2;q)_{2n}(q^3/e;q)_n},\,\,
\text{  with respect to $a=q$; } \notag
\end{align}
}
{\allowdisplaybreaks
\begin{align}\label{sgen37}
\alpha_{3r}&=
 (-1)^r \frac{1-q^{6r+2}}{1-q^2}q^{\frac{9r^2+5r}{2}}
\frac{\left(\displaystyle{e;q^3}\right)_r }
   {\left(\displaystyle{q^5/e;q^3}\right)_r\,\displaystyle{e^r}},\\
\alpha_{3r+1}&=   (-1)^{r}\frac{1-q^{6r+4}}{1-q^2}q^{\frac{9r^2+11r}{2}+3}
\frac{\left(\displaystyle{e/q^2;q^3}\right)_{r+1}}
{\left(\displaystyle{q^3/e;q^3}\right)_{r+1}\displaystyle{e^{r+1}}},\notag\\
\alpha_{3r-1}&= 0,\notag
\\
\beta_n&=
\frac{(q^4/e;q^3)_n}{(q^2;q)_{2n}(q^3/e;q)_n}\,\,
\text{  with respect to $a=q^2$. }\notag
\end{align}
}
\end{corollary}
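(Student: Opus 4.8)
The plan is to derive each of the nine pairs by sending $u,v\to\infty$ in the corresponding pair from Theorems \ref{t23}--\ref{t5}, and to exploit the fact that the Bailey-pair relation survives this limit. For each fixed $n$, equation \eqref{bpeq} is a \emph{finite} linear identity expressing $\beta_n$ through $\alpha_0,\dots,\alpha_n$, and it holds identically in the free parameters $u$ and $v$. Since every entry $\alpha_j$ and $\beta_n$ of the pairs in Theorems \ref{t23}--\ref{t5} is rational in $u$ and $v$ and converges termwise as $u,v\to\infty$, the limiting sequences again satisfy \eqref{bpeq}. Thus it suffices to compute the termwise limits and to note that the normalization $\alpha_0=\beta_0=1$, being $u,v$-independent, is retained; the Bailey-pair property then follows for free from Theorems \ref{t23}--\ref{t5}.

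For the $\alpha_n$ I would use the elementary factor-by-factor limits
\[
\lim_{u\to\infty}\frac{(q^{k}u;q^3)_r}{(u;q^3)_r}=q^{kr},\qquad
\lim_{u\to\infty}\frac{(aq^{k}/u;q^3)_r}{(aq^{k'}/u;q^3)_r}=1,
\]
together with their analogues in $v$. In each pair the entire $u,v$-dependence of $\alpha_n$ enters only through ratios of this shape, so every such block collapses to a power of $q$; for instance, the block $(q^3u,q^3v,aq^3/u,aq^3/v;q^3)_r/(u,v,a/u,a/v;q^3)_r$ in \eqref{chubp22} tends to $q^{6r}$. This power then merges with the explicit $q$-exponent already present in $\alpha_n$ to reproduce exactly the exponents $(9r^2\pm cr)/2$ recorded in \eqref{sgen20}--\eqref{sgen37}, while the $u,v$-free Pochhammer factors (the $(q^{-n};\cdot)$ and $(a;q^3)_r$ type blocks) pass through unchanged.

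For the $\beta_n$ the only $u,v$-dependence lies in the factor $K(\cdots,u,v,q^3)$, so everything reduces to showing $K\to 1$ as $u,v\to\infty$. I would establish this from an asymptotic analysis of \eqref{Keq}: the denominator factor $(1-u)(a-u)(1-v)(a-v)\sim u^2v^2$, while in the bracketed numerator the dominant summand is the last one, $(u^2+a)(v^2+a)(a^2-\sigma_4)(a^2-q\sigma_4)\sim u^2v^2(a^2-\sigma_4)(a^2-q\sigma_4)$ (the other three summands are $O(u^2v)$ or lower). Since $\sigma_4=bcde$, the surviving ratio is $(a^2-bcde)(a^2-qbcde)/\{(a^2-bcde)(a^2-bcdeq)\}=1$, consistent with the remark after the Proposition that $u,v\to\infty$ recovers \eqref{baileyeq1}. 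Hence $K_0(n),K_0'(n),K_2(n),K_3(n)\to 1$, leaving precisely the $\beta_n$ stated in the corollary.

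The step I expect to demand the most care is not conceptual but purely a matter of bookkeeping: there are nine pairs, each carrying several sub-cases $\alpha_{3r}$ and $\alpha_{3r\pm1}$, and for the pairs descending from Theorem \ref{t5} (equations \eqref{Sgen4}--\eqref{sgen37}) the limit must be combined with the re-indexing $r\mapsto r+1$ already built into those $\alpha_{3r+1}$, and with the shifted blocks $(q^3u,q^5/u;q^3)_r$ in place of $(q^3u,aq^3/u;q^3)_r$. Checking in each instance that the accumulated power of $q$ from the limit, added to the explicit exponent, matches the exponent displayed in \eqref{sgen20}--\eqref{sgen37} is the genuinely tedious part, though each verification is a one-line computation of the type illustrated above.
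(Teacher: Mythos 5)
Your proposal is correct and takes essentially the same approach as the paper: Corollary \ref{csgen1} is obtained there precisely by letting $u,v\to\infty$ in the pairs of Theorems \ref{t23}--\ref{t5}, exactly as you do. You in fact supply more justification than the paper (which asserts the limit without detail), namely the termwise-limit argument for the finite relation \eqref{bpeq} and the dominant-term verification that $K\to 1$.
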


All  eight pairs in Slater's \textbf{A} table (\cite[page 463]{S51})
and all six on her \textbf{J} list (\cite[pp. 148--149]{S52})
are  derived from the five Bailey pairs \eqref{sgen20}, \eqref{Sgen2}, \eqref{Sgen3}, \eqref{Sgen4}, \eqref{Sgen5} above, for particular
values of $e$.  Slater  did not write down these  general pairs
above explicitly, but she could easily have derived them by the same
methods she used to derive the special cases. None of Slater's pairs arise as special cases of \eqref{sgen21}, \eqref{sgen22}, \eqref{sgen35} or \eqref{sgen37},  although the special case
$e=-q^2$ of \eqref{sgen35} was given in \cite{McLS08}. However, specializing the parameters give Bailey pairs  which then give rise to some of the series-product identities on Slater's list, showing that different Bailey pairs may lead the same identity of Rogers-Ramanujan type.

Each of the nine Bailey pairs above also gives rise to a
transformation between basic hypergeometric series, upon
substituting the pair into \eqref{Seq1}. The pair at \eqref{sgen20}, for example, gives the following identity.

\begin{corollary}
{\allowdisplaybreaks
\begin{equation}\label{c6a}
\sum_{n=0}^{\infty} \frac{(y,z;q)_n(a;q^3)_{n}}
{(a;q)_{2n}(q;q)_{n}}\left ( \frac{aq}{yz}\right)^n
=\frac{(aq/y,aq/z;q)_{\infty}}{(a q,aq/yz;q)_{\infty}} \\
\sum_{n=0}^{\infty}\frac{(1-a q^{6n})(y,z;q)_{3n}(a;q^3)_n a^{4n}q^{9n^2/2+3n/2 }}
{(1-a )(aq/y,aq/z;q)_{3n}(q^3;q^3)_n}\left(\frac{-1}{y z}
\right)^{3n}.
\end{equation}
}
\end{corollary}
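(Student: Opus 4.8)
The plan is to obtain \eqref{c6a} by substituting the Bailey pair \eqref{sgen20} (relative to $a$) directly into the Bailey Transform \eqref{Seq1}. Since \eqref{sgen20} is established as a Bailey pair in Corollary \ref{csgen1}, and \eqref{Seq1} holds for every Bailey pair relative to $a$ and all non-zero $y,z$, the identity follows once both sides of \eqref{Seq1} are written out explicitly for this particular pair. No new summation formula is needed; the content is purely the specialization.

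First I would treat the left-hand side of \eqref{Seq1}, namely $\sum_{n=0}^{\infty}(y,z;q)_n (aq/yz)^n \beta_n$. Inserting $\beta_n = (a;q^3)_n\big/\big((a;q)_{2n}(q;q)_n\big)$ reproduces the left-hand side of \eqref{c6a} verbatim, with no simplification required. Turning to the right-hand side, the prefactor $(aq/y,aq/z;q)_\infty\big/(aq,aq/yz;q)_\infty$ carries over unchanged. In the remaining sum $\sum_{n=0}^{\infty}\frac{(y,z;q)_n}{(aq/y,aq/z;q)_n}(aq/yz)^n \alpha_n$, the vanishing $\alpha_{3r\pm1}=0$ collapses the sum to the terms with $n=3r$; re-indexing by $r$ and substituting $\alpha_{3r}=\frac{1-aq^{6r}}{1-a}\frac{(a;q^3)_r}{(q^3;q^3)_r}q^{(9r^2-3r)/2}(-a)^r$ yields a single $q^3$-series in $r$ that must be matched against the right-hand sum of \eqref{c6a}.

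The only real work is the bookkeeping of the monomial factors: one must combine $(aq/yz)^{3r}$, the power $q^{(9r^2-3r)/2}$, and $(-a)^r$. The powers of $a$ collect as $a^{3r}\cdot a^r=a^{4r}$; the powers of $q$ collect as $q^{3r}\cdot q^{(9r^2-3r)/2}=q^{(9r^2+3r)/2}$; and the sign and $yz$ factors combine via $(-1)^r (yz)^{-3r}=(-1/yz)^{3r}$, using $(-1)^{3r}=(-1)^r$, which matches the factor $(-1/yz)^{3n}$ and the exponent $q^{9n^2/2+3n/2}$ appearing in \eqref{c6a}. Since this is a purely mechanical verification, I expect no genuine obstacle; the substance lies entirely in having already established \eqref{sgen20} as a Bailey pair in Corollary \ref{csgen1}, together with the vanishing of $\alpha_{3r\pm1}$, which is precisely what causes the bilateral-looking series on the right to reduce to the stated $q^3$-indexed sum.
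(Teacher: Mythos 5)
Your proposal is correct and follows exactly the route the paper takes: equation \eqref{c6a} is obtained there precisely by substituting the Bailey pair \eqref{sgen20} into the Bailey transform \eqref{Seq1}, with the vanishing of $\alpha_{3r\pm 1}$ collapsing the right-hand sum to multiples of $3$ and the monomial bookkeeping ($a^{3r}\cdot a^r=a^{4r}$, $q^{3r}\cdot q^{(9r^2-3r)/2}=q^{(9r^2+3r)/2}$, $(-1)^r(yz)^{-3r}=(-1/yz)^{3r}$) working out as you describe. No discrepancy with the paper's argument.
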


\subsection{Mod 2 Bailey Pairs}

We next consider how Slater produced the Bailey pairs in the \textbf{G-}, \textbf{C-} and \textbf{I} tables of \cite{S51, S52}.

\begin{theorem}\label{t66}
Let $K_4(n):=K(a,q^{-n},q^{1-n},d,a,u,v,q^2)$. Then  \\
(i) the pair of sequences $(\alpha_n, \beta_n)$ is a Bailey pair
with respect to $a=a$, where $\alpha_0=\beta_0=1$ and
{\allowdisplaybreaks
\begin{align}\label{chubp66}
\alpha_{2r}&= \frac{\displaystyle{1-a q^{4r}}}{\displaystyle{1-a}}
\frac{\left(\displaystyle{a,d,q^2u,q^2v,a q^2/u,a
q^2/v;q^2}\right)_r \displaystyle{q^{2r^2-4r}}a^r}
   {\left(\displaystyle{a q^2/d,u,v,a/u,a/v,q^2;q^2}\right)_r\displaystyle{d^r}},\\
\alpha_{2r-1}&=0,\notag
\\
\beta_n&= K_4(n) \frac{(a q/d;q^2)_n}{(a q;q^2)_{n}(aq/d,q;q)_n}.\notag
\end{align}
}
Let $K_4'(n):=K(a q,q^{-n},q^{1-n},d,a q,u,v,q^2)$. Then  \\
(ii) the pair of sequences $(\alpha_n, \beta_n)$ is a Bailey pair
with respect to $a=a$, where $\alpha_0=\beta_0=1$ and
\begin{align}\label{chubp66b}
\alpha_{2r}&= \frac{\left(\displaystyle{a q,d,q^2u,q^2v,a q^3/u,a
q^3/v;q^2}\right)_r \displaystyle{q^{2r^2-3r}}a^r}
   {\left(\displaystyle{a q^3/d,u,v,a q/u,a q/v,q^2;q^2}\right)_r\displaystyle{d^r}},\\
\alpha_{2r+1}&=- \frac{\left(\displaystyle{a q,d,q^2u,q^2v,a
q^3/u,a q^3/v;q^2}\right)_r \displaystyle{q^{2r^2+r+1}}a^{r+1}}
   {\left(\displaystyle{a q^3/d,u,v,a q/u,a q/v,q^2;q^2}\right)_r\displaystyle{d^r}},\notag
\\
\beta_n&= K_4(n)' \frac{(a q^2/d;q^2)_n}{(a
q^2;q^2)_{n}(aq^2/d,q;q)_n};\notag
\end{align}
(iii) the pair of sequences $(\alpha_n, \beta_n)$ is a Bailey pair
with respect to $a=a$, where $\alpha_0=\beta_0=1$ and
\begin{align}\label{chubp66c}
\alpha_{2r}&= \frac{\left(\displaystyle{a q,d,q^2u,q^2v,a q^3/u,a
q^3/v;q^2}\right)_r \displaystyle{q^{2r^2-r}}a^r}
   {\left(\displaystyle{a q^3/d,u,v,a q/u,a q/v,q^2;q^2}\right)_r\displaystyle{d^r}},\\
\alpha_{2r+1}&=-\frac{\left(\displaystyle{a q,d,q^2u,q^2v,a q^3/u,a
q^3/v;q^2}\right)_r \displaystyle{q^{2r^2-r}}a^r}
   {\left(\displaystyle{a q^3/d,u,v,a q/u,a q/v,q^2;q^2}\right)_r\displaystyle{d^r}},\notag
\\
\beta_n&= K_4(n)' \frac{(a q^2/d;q^2)_n q^n}{(a
q^2;q^2)_{n}(aq^2/d,q;q)_n};\notag
\end{align}
\end{theorem}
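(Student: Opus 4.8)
The plan is to run the same machine used in Theorems \ref{t23}, \ref{t3} and \ref{t5}, but tuned to produce a mod $2$ rather than a mod $3$ structure and to retain $d$ as a free parameter.

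First I would start from Chu's identity \eqref{chueq1}, set $e=a$ so that the entire second (negative-index) bilateral sum vanishes, and replace $q$ by $q^2$ throughout. I would then set $b=q^{-n}$ and $c=q^{1-n}$, leaving $d$ untouched. The observation that produces the mod $2$ support is that, with base $q^2$, the two numerator factors coalesce into a single base-$q$ symbol,
\[
(q^{-n};q^2)_m\,(q^{1-n};q^2)_m=(q^{-n};q)_{2m},
\]
so the left-hand side collapses to a sum terminating at $m=\lfloor n/2\rfloor$ whose summand carries the factor $(q^{-n};q)_{2m}$. After applying \eqref{q-njeq} to this factor (now with argument $2m$), the Bailey summation index becomes $j=2m=2r$, which is exactly what forces $\alpha_{2r-1}=0$; this is the mod $2$ analogue of the mod $3$ coalescence $b=q^{-n},c=q^{1-n},d=q^{2-n}\mapsto(q^{-n};q)_{3m}$ implicit in the earlier theorems.

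To obtain part (i), I would simplify the product on the right of \eqref{chueq1} (it collapses to $K_4(n)$ times a quotient of $q$-Pochhammer symbols), divide through by the appropriate factors, and match the resulting relation against the definition \eqref{bpeq} of a Bailey pair relative to $a$. Reading the summand as $\alpha_{2r}$ and the right side as the stated $\beta_n=K_4(n)(aq/d;q^2)_n/[(aq;q^2)_n(aq/d,q;q)_n]$, and noting $K_4(0)=1$ so that $\alpha_0=\beta_0=1$, gives \eqref{chubp66}. For parts (ii) and (iii) I would replace $a$ by $aq$ in the identity just obtained; its natural reading is then a pair relative to $aq$, so to reinterpret it as a pair relative to $a$ I would split each term using algebraic identities of the same type as \eqref{6rt1}--\eqref{6rt4}. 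The multiplicative splitting isolating $(1-aq^{4r+1})$ (in the spirit of \eqref{6rt1} and \eqref{6rt3}) redistributes each even-indexed term into the consecutive pair $(\alpha_{2r},\alpha_{2r+1})$ of \eqref{chubp66b}, while the additive splitting carrying the extra factor $q^{-n}$ (in the spirit of \eqref{6rt2} and \eqref{6rt4}) produces \eqref{chubp66c}, the extra $q^{-n}$ accounting for the additional $q^n$ in its $\beta_n$.

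The conceptual content is routine once Theorems \ref{t23} and \ref{t3} are in hand, so I expect the only real obstacle to be the exponent bookkeeping: verifying that $(q^{-n};q)_{2m}$ emerges cleanly, getting the powers of $q$, the signs, and the base change exactly right when \eqref{q-njeq} is applied with a doubled index, and selecting the splitting identity that lands the two pieces on the consecutive indices $2r$ and $2r+1$ with prefactors matching \eqref{chubp66b} and \eqref{chubp66c} verbatim. As with the explicit forms of $K$ and $K_1(n)$ earlier in the paper, I would confirm the closed form of $K_4(n)$ and the splitting identities with a computer algebra system rather than by hand.
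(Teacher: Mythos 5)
Your proposal is correct and follows essentially the same route as the paper's proof: set $e=a$ in \eqref{chueq1}, replace $q$ by $q^2$, specialize $b=q^{-n}$, $c=q^{1-n}$ (the paper writes $d=q^{1-n}$, but by the symmetry of the $_{10}\psi_{10}$ and of $K$ in $b,c,d,e$ this is the same choice), use the coalescence into $(q^{-n};q)_{2r}$ together with \eqref{q-njeq} to read off part (i), and then obtain (ii) and (iii) by replacing $a$ with $aq$ and applying exactly the two kinds of splitting identities you describe, the second carrying the $q^{-n}$ factor that produces the extra $q^n$ in $\beta_n$.
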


\begin{proof}
The proof is quite similar to the proof of Theorem \ref{t23}.
As in the proof of that theorem, set $e=a$ in \eqref{chueq1}, so that all the terms of negative
index vanish. Then replace $q$ with $q^2$, set $b=q^{-n}$ and
$d=q^{1-n}$. After some simple
manipulations,  \eqref{chueq1} becomes
\begin{equation*}
\sum_{r=0}^{n/2}\frac{1-a q^{4r}}{1-a}
\frac{(q^{-n};q)_{2r}}{(a q^{n+1};q)_{2r}}
\frac{\left(\displaystyle{a,d,q^2u,q^2v,aq^2/u,aq^2/v;q^2}\right)_r}
{\left(\displaystyle{a q^2/du,v,a/u,a/v,q^2;q^2}\right)_r}\left(\frac{a q^{2n-3}}{d}
\right)^r
= K_4(n) \frac{(aq/d;q^2)_{n}(a q;q)_n}{(a q;q^2)_{n}(aq/d;q)_n}.
\end{equation*}
Apply \eqref{q-njeq} to the $(q^{-n};q)_{2r}$ factor, divide both
sides by $(aq;q)_n(q;q)_n$ to get
\begin{equation}\label{pr66eq}
\sum_{r=0}^{n/2}\frac{1-a q^{4r}}{1-a} \frac{q^{2r^2-4r}}{(a
q;q)_{n+2r}(q;q)_{n-2r}}
\frac{\left(\displaystyle{a,d,q^2u,q^2v,aq^2/u,aq^2/v;q^2}\right)_r}
{\left(\displaystyle{a
q^2/du,v,a/u,a/v,q^2;q^2}\right)_r}\left(\frac{a}{d}
\right)^r
= K_4(n) \frac{(aq/d;q^2)_{n}}{(a q;q^2)_{n}(aq/d,q;q)_n},
\end{equation}
and the proof for the pair at \eqref{chubp66} follows.

For \eqref{chubp66b} and \eqref{chubp66c}, replace $a$ with $aq$ in
\eqref{pr66eq}, and use, respectively, the identities
\begin{align*}
(1-aq^{4r+1})q^{2r^2-3r}&=q^{2r^2-3r}(1-a q^{n+2r+1})-aq^{2r^2+r+1}(1-q^{n-2r}),\\
(1-aq^{4r+1})q^{2r^2-3r}&=q^{2r^2-r}q^{-n}((1-aq^{n+2r+1})-(1-q^{n-2r})).
\end{align*}

\end{proof}

\begin{theorem}\label{t7}
Let $K_5(n):=K(q,q^{-n},q^{1-n},d,e,u,v,q^2)$. Then \\
(i) the pair of sequences $(\alpha_n, \beta_n)$ is a Bailey pair with
respect to $a=q$, where $\alpha_0=\beta_0=1$ and
\begin{align}\label{chubp9}
\alpha_{2r}&=
\frac{\displaystyle{1-q^{4r+1}}}{\displaystyle{1-q}}
\frac{\left(\displaystyle{d,e,q^2u,q^2v,q^3/u,q^3/v;q^2}\right)_r\displaystyle{q^{2r^2-2r}}}
   {\left(\displaystyle{q^3/d,q^3/e,u,v,q/u,q/v;q^2}\right)_r\displaystyle{(d \,e)^r}},\\
\alpha_{2r-1}&=-\frac{\displaystyle{1-q^{4r-1}}}{\displaystyle{1-q}}
\frac{\left(\displaystyle{d/q,e/q,qu,qv,q^2/u,q^2/v;q^2}\right)_r\displaystyle{q^{2r^2-4r+1}}}
   {\left(\displaystyle{q^2/d,q^2/e,u/q,v/q,1/u,1/v;q^2}\right)_r\displaystyle{(de)^r}},\notag
\\
\beta_n&= K_5(n) \frac{(q^3/de;q^2)_n}{(q^2;q^2)_{n}(q^2/d,q^2/e;q)_n};\notag
\end{align}
(ii) The pair of sequences $(\alpha_n, \beta_n)$ is a Bailey pair
with respect to $a=1$, where $\alpha_0=\beta_0=1$ and
{\allowdisplaybreaks
\begin{align}\label{chubp10}
\alpha_{2r}&=\frac{\left(\displaystyle{d,e,q^2u,q^2v,q^3/u,q^3/v;q^2}\right)_r\displaystyle{q^{2r^2-2r}}}
   {\left(\displaystyle{q^3/d,q^3/e,u,v,q/u,q/v;q^2}\right)_r\displaystyle{(d \,e)^r}}
+\frac{\left(\displaystyle{d/q,e/q,qu,qv,q^2/u,q^2/v;q^2}\right)_r\displaystyle{q^{2r^2}}}
   {\left(\displaystyle{q^2/d,q^2/e,u/q,v/q,1/u,1/v;q^2}\right)_r\displaystyle{(de)^r}},\\
\alpha_{2r-1}&=-\frac{\left(\displaystyle{d,e,q^2u,q^2v,q^3/u,q^3/v;q^2}\right)_{r-1}\displaystyle{q^{2r^2-2r+1}}}
   {\left(\displaystyle{q^3/d,q^3/e,u,v,q/u,q/v;q^2}\right)_{r-1}\displaystyle{(d \,e)^{r-1}}}
-
\frac{\left(\displaystyle{d/q,e/q,qu,qv,q^2/u,q^2/v;q^2}\right)_r\displaystyle{q^{2r^2-4r+1}}}
   {\left(\displaystyle{q^2/d,q^2/e,u/q,v/q,1/u,1/v;q^2}\right)_r\displaystyle{(de)^r}},\notag\\
\beta_n&= K_5(n) \frac{(q^3/de;q^2)_n}{(q^2;q^2)_{n}(q^2/d,q^2/e;q)_n};\notag
\end{align}
} (iii) The pair of sequences $(\alpha_n, \beta_n)$ is a Bailey
pair with respect to $a=1$, where $\alpha_0=\beta_0=1$ and
{\allowdisplaybreaks
\begin{align}\label{chubp11}
\alpha_{2r}&=\frac{\left(\displaystyle{d,e,q^2u,q^2v,q^3/u,q^3/v;q^2}\right)_r\displaystyle{q^{2r^2}}}
   {\left(\displaystyle{q^3/d,q^3/e,u,v,q/u,q/v;q^2}\right)_r\displaystyle{(d \,e)^r}}
+\frac{\left(\displaystyle{d/q,e/q,qu,qv,q^2/u,q^2/v;q^2}\right)_r\displaystyle{q^{2r^2-2r}}}
   {\left(\displaystyle{q^2/d,q^2/e,u/q,v/q,1/u,1/v;q^2}\right)_r\displaystyle{(de)^r}},
  \\
\alpha_{2r-1}&=-\frac{\left(\displaystyle{d,e,q^2u,q^2v,q^3/u,q^3/v;q^2}\right)_{r-1}\displaystyle{q^{2r^2-4r+2}}}
   {\left(\displaystyle{q^3/d,q^3/e,u,v,q/u,q/v;q^2}\right)_{r-1}\displaystyle{(d \,e)^{r-1}}}
-
\frac{\left(\displaystyle{d/q,e/q,qu,qv,q^2/u,q^2/v;q^2}\right)_r\displaystyle{q^{2r^2-2r}}}
   {\left(\displaystyle{q^2/d,q^2/e,u/q,v/q,1/u,1/v;q^2}\right)_r\displaystyle{(de)^r}},\notag\\
\beta_n&= K_5(n)\,\frac{ q^n \,(q^3/de;q^2)_n}{(q^2;q^2)_{n}(q^2/d,q^2/e;q)_n};\notag
\end{align}
}
\end{theorem}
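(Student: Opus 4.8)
The plan is to follow the proof of Theorem~\ref{t3} line for line, replacing its ``mod~$3$'' bookkeeping by the ``mod~$2$'' bookkeeping that Theorem~\ref{t66} uses in the unilateral case. First I would replace $q$ by $q^2$ throughout \eqref{chueq1} and then set $a=q$, $b=q^{-n}$ and $c=q^{1-n}$, leaving $d$ and $e$ free. Since $e$ is not specialized to $a$, the series remains bilateral; but the two terminating parameters combine through
\[
(q^{-n};q^2)_m\,(q^{1-n};q^2)_m=(q^{-n};q)_{2m},
\]
so the positive tail terminates at $m=\lfloor n/2\rfloor$, and by the symmetric mechanism on the negative-index sum (where $b/a=q^{-n-1}$ and $c/a=q^{-n}$ combine to $(q^{-n-1};q)_{2m}$) the negative tail terminates as well. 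Once $a=q$ and $q\to q^2$, the very-well-poised factor becomes $(1-q^{4r+1})/(1-q)$, and after collecting the $d$- and $e$-dependent shifted factorials I expect the two halves of the ${}_{10}\psi_{10}$ to merge into a single bilateral sum over $r$ carrying the factor $(q^{-n};q)_{2r}$, equated to $K_5(n)$ times a finite product. This is the mod~$2$ analogue of the displayed bilateral identity in the proof of Theorem~\ref{t3}.

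Next I would apply \eqref{q-njeq} to the $(q^{-n};q)_{2r}$ factor, introducing $(q;q)_{n+2r+1}$ and $(q;q)_{n-2r}$ in the denominator, use \eqref{binneg} to rewrite the negative-index shifted factorials in the $r<0$ part, and, paralleling Theorem~\ref{t66}, divide both sides by $(q^2;q)_n(q;q)_n$. I would then break the bilateral sum into its $r\ge 0$ and $r<0$ pieces, reindexing $r\mapsto -r$ in the latter so that both run over $r\ge 0$. For part~(i), a pair relative to $a=q$, no splitting of the very-well-poised factor is needed: writing $(q;q)_{n+2r+1}=(1-q)(q^2;q)_{n+2r}$ in the first piece and $(q;q)_{n+2r}=(1-q)(q^2;q)_{n+2r-1}$ in the second, the surviving $(1-q^{4r+1})/(1-q)$ and $(1-q^{-4r+1})/(1-q)$ match the definition \eqref{bpeq} with $aq=q^2$; the $r\ge 0$ piece yields the even-indexed $\alpha_{2r}$ and the $r<0$ piece the odd-indexed $\alpha_{2r-1}$. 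This is the mod~$2$ counterpart of the derivation of \eqref{chubp55}.

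For parts (ii) and (iii), which are relative to $a=1$, the very-well-poised factor must be split by telescoping identities playing the role of \eqref{6rt1}. For (ii) I would use
\[
(1-q^{4r+1})q^{2r^2-2r}=q^{2r^2-2r}(1-q^{n+2r+1})-q^{2r^2+2r+1}(1-q^{n-2r})
\]
on the $r\ge 0$ piece together with its partner
\[
(1-q^{-4r+1})q^{2r^2}=q^{2r^2}(1-q^{n-2r+1})-q^{2r^2-4r+1}(1-q^{n+2r})
\]
on the $r<0$ piece; dividing through then converts every denominator to the $(q;q)_{n-j}(q;q)_{n+j}$ form required by a pair relative to $a=1$. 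For (iii) I would instead use the $q^{-n}$-form identities
\[
(1-q^{4r+1})q^{2r^2-2r}=q^{2r^2-n}\bigl((1-q^{n+2r+1})-(1-q^{n-2r})\bigr)
\]
and its negative-index analogue, which pushes an extra factor $q^n$ into $\beta_n$, exactly as the companion of \eqref{6rt1} does in Theorem~\ref{t3}. Finally I would record $K_5(0)=K_5(1)=1$, guaranteeing $\alpha_0=\beta_0=1$.

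The step I expect to be the real obstacle is the reindexing that produces the two-term expressions for $\alpha_{2r-1}$. In the mod~$3$ case $3r+1$ and $3r-1$ are distinct residues, so the split terms land in separate sequences; here $2r+1$ and $2r-1$ lie in the same residue class modulo~$2$. Consequently the term produced by the $r\ge 0$ piece (of index $2r+1$, carrying the order-$r$ factorials in $d,e$ and a power of $de$) must be reindexed $r\mapsto r-1$ and added to the term from the $r<0$ piece (of index $2r-1$, carrying the order-$r$ factorials in $d/q,e/q$). Keeping the powers of $q$, the signs, and the differing orders of the $(d,e)$- versus $(d/q,e/q)$-factorials aligned through this merge is the delicate bookkeeping; everything else is the routine $q$-shifted-factorial manipulation already carried out in Theorems~\ref{t3} and~\ref{t66}.
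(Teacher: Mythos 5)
Your proposal is correct and follows essentially the same route as the paper's own proof: specialize \eqref{chueq1} with $q\to q^2$, $a=q$, $b=q^{-n}$, $c=q^{1-n}$, apply \eqref{q-njeq} and \eqref{binneg}, split the bilateral sum at $r=0$, keep the very-well-poised factor intact for the $a=q$ pair (i), and split it via the telescoping identities (your identities agree, up to an overall sign, with the paper's \eqref{t10eqs} and \eqref{t10eqs2}) for the $a=1$ pairs (ii) and (iii), with the final $r\mapsto r-1$ reindexing merging the $2r+1$ and $2r-1$ contributions. The bookkeeping you flag as delicate is handled in the paper in exactly the way you describe.
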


\begin{proof}
The proof is very similar to the proof of Theorem \ref{t3}.
This time, replace $q$ with $q^2$ in \eqref{chueq1}, and then set $a=q$, $b=q^{-n}$, $c=q^{1-n}$. The right side simplifies to give
\[
K_5(n)\frac{(q,q^2;q)_n(q^3/de;q^2)_n}{(q^2;q^2)_{n}(q^2/d,q^2/e;q)_n}.
\]

After some elementary manipulations,  the series becomes
\[
\sum_{r=-(n+1)/2}^{n/2}\frac{1-q^{4r+1}}{1-q}
\frac{(q^{-n};q)_{2r}}
{(q^{n+2};q)_{2r}}
\frac{\left(\displaystyle{d,e,q^2u,q^2v,q^3/u,q^3/v;q^2}\right)_r}
{\left(\displaystyle{q^3/d,q^3/e,u,v,q/u,q/v;q^2}\right)_r}
\left(\frac{q^{2n-1}}{d e}\right)^r.
\]

We apply \eqref{q-njeq} to the $(q^{-n};q)_{2r}$ factor above and rearrange terms to get
\[
\sum_{r=-\frac{n+1}{2}}^{n/2}
\frac{(1-q^{4r+1})q^{2r^2-2r}}
{(1-q)(q^2;q)_{n+2r}(q;q)_{n-2r}\, d^r e^r}
\frac{\left(\displaystyle{d,e,q^2u,q^2v,q^3/u,q^3/v;q^2}\right)_r}
{\left(\displaystyle{q^3/d,q^3/e,u,v,q/u,q/v;q^2}\right)_r}
= \frac{K_5(n)(q^3/de;q^3)_n}{(q^2;q^2)_{n}(q^2/d,q^2/e;q)_n}.
\]
After applying \eqref{binneg} to the terms of negative index in the sum above, we get that
{\allowdisplaybreaks
\begin{multline}\label{pr910eq}
K_5(n)\frac{(q^3/de;q^2)_n}{(q^2;q^2)_{n}(q^2/d,q^2/e;q)_n}\\
=\sum_{r=0}^{n/2} \frac{(1-q^{4r+1})q^{2r^2-2r}}
{(1-q)(q^2;q)_{n+2r}(q;q)_{n-2r}\, d^r e^r}
\frac{\left(\displaystyle{d,e,q^2u,q^2v,q^3/u,q^3/v;q^2}\right)_r}
{\left(\displaystyle{q^3/d,q^3/e,u,v,q/u,q/v;q^2}\right)_r}\\
-
\sum_{r=1}^{\frac{n+1}{2}}
\frac{(1-q^{4r-1})q^{2r^2-4r+1}}
{(1-q)(q^2;q)_{n-2r}(q;q)_{n+2r}\, d^r e^r}
\frac{\left(\displaystyle{d/q,e/q,q u,q v,q^2/u,q^2/v;q^2}\right)_r}
{\left(\displaystyle{q^2/d,q^2/e,u/q,v/q,1/u,1/v;q^2}\right)_r}.
\end{multline}
}
The pair at \eqref{chubp9} now follows after some simple manipulations, noting that
\[(
q^2;q)_{n-2r}(q;q)_{n+2r}=(q;q)_{n-2r+1}(q^2;q)_{n+2r-1}.
\]

The pair at \eqref{chubp10} follows from \eqref{pr910eq}, upon absorbing the $1-q$ factor in the denominators there, employing the identities
\begin{align}\label{t10eqs}
(1-q^{4r+1})q^{2r^2-2r}&=q^{2r^2-2r}(1-q^{n+2r+1})-q^{2r^2+2r+1}(1-q^{n-2r}),\\
(1-q^{4r-1})q^{2r^2-4r+1}&=q^{2r^2-4r+1}(1-q^{n+2r})-q^{2r^2}(1-q^{n-2r+1}),\notag
\end{align}
in the way similar to the way that the pair of identities at \eqref{6rt1} was used in the proof of Theorem \ref{t3}, and finally re-indexing one of the resulting sums (by replacing $r$ with $r-1$).

The proof for the pair at \eqref{chubp11} is similar to the proof for the pair at \eqref{chubp10}, except we employ the identities
\begin{align}\label{t10eqs2}
(1-q^{4r+1})q^{2r^2-2r}&=q^{2r^2-n}((1-q^{n+2r+1})-(1-q^{n-2r})),\\
(1-q^{4r-1})q^{2r^2-4r+1}&=q^{2r^2-2r-n}((1-q^{n+2r})-(1-q^{n-2r+1})).\notag
\end{align}
\end{proof}

As with $K_0$ - $K_3$, $K_4$ and $K_5$ are in general quite complicated, but simplify considerably for particular values of the parameters, leading (as was the case in Corollary \ref{C65}) to  transformations of basic hypergeometric series. We give one example.

\begin{corollary} If $a$, $y$, $z$, $q\in \mathbb{C}$ such that $|q|<1$ and none of the denominators below vanish, then
\begin{equation}
\sum_{n=0}^{\infty}
\frac{(y,z;q)_n (-1/q;q^2)_{n}}{(q^2;q^2)_n (a q;q^2)_n}
\left( \frac{a q^2}{y z}\right)^n=
\frac{(aq/y,aq/z;q)_{\infty}}{(a q,aq/yz;q)_{\infty}}
\sum_{n=0}^{\infty}\frac{1-a q^{4n}}{1-a} \frac{(y,z;q)_{2n}(a^2,-a q^4;q^4)_n}
{(aq/y,aq/z;q)_{2n}(-a,q^4;q^4)_n}\left ( \frac{-a^2}{y^2 z^2} \right)^n q^{2 n^2}.
\end{equation}
\end{corollary}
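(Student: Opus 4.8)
The plan is to derive the identity by specializing the mod~$2$ Bailey pair of Theorem~\ref{t66}(i) (equation~\eqref{chubp66}) and feeding the result into the Bailey Transform~\eqref{Seq1}. The guiding observation is that the right-hand side of the corollary involves only the factors $(y,z;q)_{2n}/(aq/y,aq/z;q)_{2n}$, so the underlying $\alpha$-sequence must be supported on even indices; this is precisely the situation in~\eqref{chubp66}, where $\alpha_{2r-1}=0$. First I would set $d=-a$, $u=\sqrt{-a}$, and let $v\to\infty$ in~\eqref{chubp66}. The value $d=-a$ is chosen so that the base-$q^2$ factors coalesce into base-$q^4$ factors, the value $u=\sqrt{-a}$ manufactures the missing very-well-poised factor, and $v\to\infty$ supplies a pure power of $q$.

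Under this specialization the $\alpha$-side simplifies transparently. Since $d=-a$, one has $(a,d;q^2)_r=(a;q^2)_r(-a;q^2)_r=(a^2;q^4)_r$ in the numerator and $(aq^2/d,q^2;q^2)_r=(-q^2,q^2;q^2)_r=(q^4;q^4)_r$ in the denominator, while the prefactor $a^r/d^r=(-1)^r$. The remaining $u,v$-dependent quotient telescopes: using $\frac{(q^2u;q^2)_r}{(u;q^2)_r}=\frac{1-uq^{2r}}{1-u}$ and $\frac{(aq^2/u;q^2)_r}{(a/u;q^2)_r}=\frac{u-aq^{2r}}{u-a}$, the choice $u^2=-a$ collapses the $u$-factor to $\frac{1-u^2q^{4r}}{1-u^2}=\frac{1+aq^{4r}}{1+a}=\frac{(-aq^4;q^4)_r}{(-a;q^4)_r}$, while $v\to\infty$ turns the $v$-factor into $q^{2r}$. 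Combining these with the exponent $q^{2r^2-4r}$ from~\eqref{chubp66} yields exactly $\alpha_{2r}=\frac{1-aq^{4r}}{1-a}\frac{(a^2,-aq^4;q^4)_r}{(-a,q^4;q^4)_r}(-1)^rq^{2r^2-2r}$. For the $\beta$-side, $d=-a$ gives $aq/d=-q$ and $(-q,q;q)_n=(q^2;q^2)_n$, so $\beta_n=K_4(n)(-q;q^2)_n/\big((aq;q^2)_n(q^2;q^2)_n\big)$; one then checks that $K_4(n)$ collapses to $q^n(-1/q;q^2)_n/(-q;q^2)_n=\frac{q^{n-1}(1+q)}{1+q^{2n-1}}$, whence $\beta_n=q^n(-1/q;q^2)_n/\big((q^2;q^2)_n(aq;q^2)_n\big)$.

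Finally I would substitute this pair into~\eqref{Seq1}. On the series side, $(y,z;q)_n(aq/yz)^n\beta_n=\frac{(y,z;q)_n(-1/q;q^2)_n}{(q^2;q^2)_n(aq;q^2)_n}(aq^2/yz)^n$, since $q^n(aq/yz)^n=(aq^2/yz)^n$, reproducing the left-hand side of the corollary. On the $\alpha$-side only the terms $n=2r$ survive, and $(aq/yz)^{2r}\alpha_{2r}$ contributes the factor $(-a^2/y^2z^2)^rq^{2r^2}$ together with $(y,z;q)_{2r}/(aq/y,aq/z;q)_{2r}$, giving the stated right-hand side after reindexing $n=r$. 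The one genuinely nontrivial step—and the main obstacle—is verifying that the large rational expression $K(a,q^{-n},q^{1-n},-a,a,\sqrt{-a},v,q^2)$ really does reduce, in the limit $v\to\infty$, to the clean form $\frac{q^{n-1}(1+q)}{1+q^{2n-1}}$; as with the earlier simplifications of $K$ in this paper, this is most safely carried out with a computer algebra system, checking in particular that $K_4(0)=K_4(1)=1$ so that $\beta_0=1$ as the definition of a Bailey pair requires.
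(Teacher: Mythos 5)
Your proof is correct and is essentially the paper's own argument: both specialize the Bailey pair \eqref{chubp66} of Theorem~\ref{t66}(i) and substitute the result into the Bailey transform \eqref{Seq1}. The paper's specialization is $d=-a/q^2$, $u=-q^2$, $v=i\sqrt{a}$ rather than your $d=-a$, $u=\sqrt{-a}$, $v\to\infty$, but the two choices produce the identical Bailey pair, and your claimed value $K_4(n)=q^{n-1}(1+q)/(1+q^{2n-1})$ is indeed correct (with $u^2=-a$ the term $(u^2+a)(v^2+a)(a^2-\sigma_4)(a^2-q\sigma_4)$ in the numerator of $K$ vanishes, so the $v\to\infty$ limit reduces to a short hand computation), so the difference is purely one of parametrization.
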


\begin{proof}
In the Bailey pair at \eqref{chubp66}, set $d=-a/q^2$, $u=-q^2$ and $v=i \sqrt{a}$, so that $K_4(n)$ takes the value
\[
K\left(a,q^{-n},q^{1-n},\frac{-a}{q^2},a,-q^2,i \sqrt{a},q^2\right)=\frac{q^{n-1} (q+1) \left(q^{n+1}+1\right) \left(q^{n+2}+1\right)}{\left(q^2+1\right) \left(q^{2 n-1}+1\right) \left(q^{2 n+1}+1\right)}.
\]
Substitute the resulting Bailey pair into \eqref{Seq1}, and the result follows after some elementary $q$-product manipulations.
\end{proof}

The Bailey pairs in Slater's \textbf{G-}, \textbf{C-} and \textbf{I}
tables, as well as pairs \textbf{E(1)}, \textbf{E(2)} ,
\textbf{E(4)} and \textbf{E(5)} (see \cite[pages 469 and 470]{S51}),
are derived from the next six Bailey pairs. These, in turn, are derived from the pairs in Theorems \ref{t66} and \ref{t7}, by letting $u, v \to \infty$.
\begin{corollary}\label{csgen2}
The   sequences $(\alpha_n, \beta_n)$ below are  Bailey pairs with
respect to the stated values of $a$, where $\alpha_0=\beta_0=1$ and
{\allowdisplaybreaks
\begin{align}\label{sgen660}
\alpha_{2r}&=
\frac{\displaystyle{1-a q^{4r}}}{\displaystyle{1-a}}
\frac{\left(\displaystyle{a,d;q^2}\right)_r a^r\displaystyle{q^{2r^2}}}
   {\left(\displaystyle{a q^2/d,q^2;q^2}\right)_r\displaystyle{d^r}},\\
\alpha_{2r-1}&=0,\notag
\\
\beta_n&= \frac{(a q/d;q^2)_n}{(a q;q^2)_{n}(aq/d,q;q)_n},\,\, \text{ with respect to $a=a$;}\notag
\end{align}
}
{\allowdisplaybreaks
\begin{align}\label{sgen661}
\alpha_{2r}&= \frac{\left(\displaystyle{a q,d;q^2}\right)_r \displaystyle{q^{2r^2+r}}a^r}
   {\left(\displaystyle{a q^3/d,q^2;q^2}\right)_r\displaystyle{d^r}},\\
\alpha_{2r+1}&=- \frac{\left(\displaystyle{a q,d;q^2}\right)_r \displaystyle{q^{2r^2+5r+1}}a^{r+1}}
   {\left(\displaystyle{a q^3/d,q^2;q^2}\right)_r\displaystyle{d^r}},\notag
\\
\beta_n&=\frac{(a q^2/d;q^2)_n}{(a
q^2;q^2)_{n}(aq^2/d,q;q)_n}\,\, \text{ with respect to $a=a$;}\notag
\end{align}
}{\allowdisplaybreaks
\begin{align}\label{sgen662}
\alpha_{2r}&= \frac{\left(\displaystyle{a q,d;q^2}\right)_r \displaystyle{q^{2r^2+3r}}a^r}
   {\left(\displaystyle{a q^3/d,q^2;q^2}\right)_r\displaystyle{d^r}},\\
\alpha_{2r+1}&=-\frac{\left(\displaystyle{a q,d;q^2}\right)_r \displaystyle{q^{2r^2+3r}}a^r}
   {\left(\displaystyle{a q^3/d,q^2;q^2}\right)_r\displaystyle{d^r}},\notag
\\
\beta_n&=\frac{(a q^2/d;q^2)_n q^n}{(a
q^2;q^2)_{n}(aq^2/d,q;q)_n},\,\, \text{ with respect to $a=a$;}\notag
\end{align}
}
{\allowdisplaybreaks
\begin{align}\label{Sgen7}
\alpha_{2r}&=
\displaystyle{ \frac{(1-q^{4r+1})
q^{2r^2+2r}(d;q^2)_r(e;q^2)_r}
{(1-q)(q^3/d;q^2)_r (q^3/e;q^2)_r d^r e^r}},\\
\alpha_{2r-1}&=\displaystyle{-\frac{(1-q^{4r-1}) q^{2r^2+1}(d/q;q^2)_r(e/q;q^2)_r}
{(1-q)(q^2/d;q^2)_r (q^2/e;q^2)_r d^r e^r}},\notag
\\
\beta_n&=\frac{(q^3/de;q^2)_n}{(q^2;q^2)_{n}(q^2/d;q)_n(q^2/e;q)_n},\,\, \text{ with respect to $a=q$;}\notag
\end{align}
} {\allowdisplaybreaks
\begin{align}\label{Sgen8}
\alpha_{2r}&=
\displaystyle{ \frac{
q^{2r^2+4r}(d/q;q^2)_r(e/q;q^2)_r} {(q^2/d;q^2)_r (q^2/e;q^2)_r d^r
e^r}}
+\displaystyle{\frac{
q^{2r^2+2r}(d;q^2)_r(e;q^2)_r}
{(q^3/d;q^2)_r (q^3/e;q^2)_r d^r e^r}},\\
\alpha_{2r+1}&=\displaystyle{ -\frac{ q^{2r^2+4r+3}(d/q;q^2)_{r+1}(e/q;q^2)_{r+1}}
{(q^2/d;q^2)_{r+1} (q^2/e;q^2)_{r+1} d^{r+1} e^{r+1}}}
-\displaystyle{\frac{
q^{2r^2+6r+1}(d;q^2)_r(e;q^2)_r} {(q^3/d;q^2)_r (q^3/e;q^2)_r d^r
e^r}},\notag
\\
\beta_n&=\frac{(q^3/de;q^2)_n}{(q^2;q^2)_{n}(q^2/d;q)_n(q^2/e;q)_n},\,\, \text{ with respect to $a=1$;}\notag
\end{align}
} {\allowdisplaybreaks
\begin{align}\label{Sgen9}
\alpha_{2r}&= \displaystyle{ \frac{
q^{2r^2+2r}(d/q;q^2)_r(e/q;q^2)_r} {(q^2/d;q^2)_r (q^2/e;q^2)_r d^r
e^r}}
 +\displaystyle{\frac{
q^{2r^2+4r}(d;q^2)_r(e;q^2)_r}
{(q^3/d;q^2)_r (q^3/e;q^2)_r d^r e^r}},\\
\alpha_{2r+1}&= \displaystyle{- \frac{ q^{2r^2+6r+4}(d/q;q^2)_{r+1}(e/q;q^2)_{r+1}}
{(q^2/d;q^2)_{r+1} (q^2/e;q^2)_{r+1} d^{r+1} e^{r+1}}}
-\displaystyle{\frac{
q^{2r^2+4r}(d;q^2)_r(e;q^2)_r} {(q^3/d;q^2)_r (q^3/e;q^2)_r d^r
e^r}},\notag\\
\beta_n&=\frac{q^n(q^3/de;q^2)_n}{(q^2;q^2)_{n}(q^2/d;q)_n(q^2/e;q)_n},\,\, \text{ with respect to $a=1$}.\notag
\end{align}
}
\end{corollary}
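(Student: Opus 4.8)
The plan is to derive all six pairs as the $u,v\to\infty$ limits of the pairs in Theorems \ref{t66} and \ref{t7}, precisely as the sentence preceding the statement promises. The conceptual engine is that the Bailey-pair condition \eqref{bpeq} is, for each fixed $n$, a \emph{finite} linear relation expressing $\beta_n$ in terms of $\alpha_0,\dots,\alpha_n$. Each of these quantities is a rational-times-power expression in $u$ and $v$, and \eqref{bpeq} holds identically in $u,v$. Consequently, once I check that the relevant limits exist, the limiting sequences inherit \eqref{bpeq} termwise and hence form a Bailey pair with the same value of $a$; the normalization $\alpha_0=\beta_0=1$ passes to the limit trivially. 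The problem therefore reduces to two limit computations.

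First I would verify that $K_4(n)$, $K_4'(n)$ and $K_5(n)$ each tend to $1$ as $u,v\to\infty$. This is immediate from \eqref{Keq}: of the four summands in the bracket only the last, $(u^2+a)(v^2+a)(a^2-\sigma_4)(a^2-q\sigma_4)$, has degree $4$ in $(u,v)$, while the explicit denominator factor $(1-u)(a-u)(1-v)(a-v)$ also has degree $4$. Retaining only the dominant $u^2v^2$ parts and using $\sigma_4=bcde$, the surviving numerator is $u^2v^2(a^2-bcde)(a^2-bcdeq)$, which cancels the denominator exactly, so $K\to1$; this cancellation is independent of the specialized values of the remaining parameters. (It is nothing but the observation recorded after the Proposition that the $u,v\to\infty$ limit of \eqref{chueq1} is Bailey's formula \eqref{baileyeq1}.) Since the $\beta_n$ in Theorems \ref{t66} and \ref{t7} are $K$ times a factor free of $u,v$, each $\beta_n$ reduces in the limit to the product stated in the corollary.

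Second I would compute the limits of the $q$-shifted Pochhammer ratios carrying the $u,v$-dependence in the $\alpha$'s. The single estimate needed is that, as $u\to\infty$,
\[
(q^{\gamma}u;q^2)_r \sim (-u)^r q^{\gamma r+r(r-1)},
\]
from which every factor $(q^{\delta}/u;q^2)_r\to1$, while a ratio such as $(q^2u;q^2)_r/(u;q^2)_r$ or $(qu;q^2)_r/(u/q;q^2)_r$ tends to $q^{2r}$. Since $u$ and $v$ occur symmetrically, each $\alpha$-expression acquires a clean factor $q^{4r}$, and absorbing this into the power of $q$ already present reproduces exactly the exponents displayed in \eqref{sgen660}--\eqref{Sgen9}.

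The only genuine care is in the bookkeeping for the mod-$2$ pairs, where the ``even'' contribution ($q^2u$ over $u$) and the ``odd'' contribution ($qu$ over $u/q$) yield the same limiting factor $q^{4r}$ but through different intermediate exponents, so the two summands making up a single $\alpha$ must be tracked independently. I also note that Corollary \ref{csgen2} presents the odd-index terms of \eqref{Sgen8} and \eqref{Sgen9} with the convention $\alpha_{2r+1}$, whereas the source formulas \eqref{chubp10} and \eqref{chubp11} use $\alpha_{2r-1}$; matching these requires the harmless shift $r\mapsto r+1$ after passing to the limit. I expect no obstacle beyond this accounting.
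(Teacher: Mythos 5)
Your proposal is correct and follows exactly the route the paper intends: the six pairs are obtained by letting $u,v\to\infty$ in Theorems \ref{t66} and \ref{t7}, with $K_4$, $K_4'$, $K_5\to1$ by the dominant-term cancellation in \eqref{Keq}, the $u,v$-Pochhammer ratios contributing $q^{4r}$, and the re-indexing $r\mapsto r+1$ reconciling the $\alpha_{2r-1}$ convention of \eqref{chubp10}--\eqref{chubp11} with the $\alpha_{2r+1}$ convention of \eqref{Sgen8}--\eqref{Sgen9}. Your added justification that \eqref{bpeq} passes to the limit termwise (being a finite sum for each fixed $n$) makes explicit what the paper leaves implicit.
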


\subsection{Mod 4 Bailey Pairs}
We continue to follow in Slater's path \cite{S51}, next considering how she produced the Bailey pairs in her \textbf{K} table.
\begin{theorem}\label{t41}
Let $K_6(n):=K(q,q^{-n},q^{1-n},q^{2-n},q^{3-n},u,v,q^4)$ for $n\geq3$,  and $K_6(0)$ $=K_6(1)=K_6(2)=1$. Then
(i)  the
pair of sequences $(\alpha_n, \beta_n)$ is a Bailey pair with
respect to $a=1$, where $\alpha_0=\beta_0=1$, $\alpha_{4r+2}=0$,
and {\allowdisplaybreaks
\begin{align}\label{chubp41}
\alpha_{4r}&=
\frac{\left(\displaystyle{q^4u,q^4v, q^5/u, q^5/v;q^4}\right)_r}
   {\left(\displaystyle{u,v,q/u,q/v;q^4}\right)_r}q^{8r^2-10r}
   + \frac{\left(\displaystyle{q^3u,q^3v, q^4/u, q^4/v;q^4}\right)_r}
   {\left(\displaystyle{u/q,v/q,1/u,1/v;q^4}\right)_r}q^{8r^2-6r}\\
 \alpha_{4r+1}&=
-\frac{\left(\displaystyle{q^4u,q^4v, q^5/u, q^5/v;q^4}\right)_r}
   {\left(\displaystyle{u,v,q/u,q/v;q^4}\right)_r}q^{8r^2-2r+1}\notag\\
\alpha_{4r-1}&=
-\frac{\left(\displaystyle{q^3u,q^3v, q^4/u, q^4/v;q^4}\right)_r}
   {\left(\displaystyle{u/q,v/q,1/u,1/v;q^4}\right)_r}q^{8r^2-14r+1}\notag\\
\beta_n&= K_6(n) \frac{(-q^2 ;q^2)_{n-1}}{(q;q)_{2n}};\notag
\end{align}
}
(ii)  the pair of sequences $(\alpha_n, \beta_n)$ is a Bailey pair
with respect to $a=1$, where $\alpha_0=\beta_0=1$,
$\alpha_{4r+2}=0$,
and {\allowdisplaybreaks
\begin{align}\label{chubp42}
\alpha_{4r}&=
\frac{\left(\displaystyle{q^4u,q^4v, q^5/u, q^5/v;q^4}\right)_r}
   {\left(\displaystyle{u,v,q/u,q/v;q^4}\right)_r}q^{8r^2-6r}
   + \frac{\left(\displaystyle{q^3u,q^3v, q^4/u, q^4/v;q^4}\right)_r}
   {\left(\displaystyle{u/q,v/q,1/u,1/v;q^4}\right)_r}q^{8r^2-10r}\\
 \alpha_{4r+1}&=
-\frac{\left(\displaystyle{q^4u,q^4v, q^5/u, q^5/v;q^4}\right)_r}
   {\left(\displaystyle{u,v,q/u,q/v;q^4}\right)_r}q^{8r^2-6r}\notag\\
\alpha_{4r-1}&=
-\frac{\left(\displaystyle{q^3u,q^3v, q^4/u, q^4/v;q^4}\right)_r}
   {\left(\displaystyle{u/q,v/q,1/u,1/v;q^4}\right)_r}q^{8r^2-10r}\notag\\
\beta_n&= K_6(n) \frac{q^n\,(-q^2 ;q^2)_{n-1}}{(q;q)_{2n}}.\notag
\end{align}
}
\end{theorem}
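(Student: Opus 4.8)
The plan is to closely follow the proofs of Theorems~\ref{t3} and \ref{t7}, now with base $q^4$ and with \emph{all four} of $b,c,d,e$ specialized to consecutive powers of $q$. First I would replace $q$ by $q^4$ in Chu's identity \eqref{chueq1} and set $a=q$, $b=q^{-n}$, $c=q^{1-n}$, $d=q^{2-n}$, $e=q^{3-n}$. The structural fact that drives everything is that four consecutive base-$q^4$ factors collapse to one base-$q$ factor,
\[
(x;q)_{4r}=(x;q^4)_r\,(xq;q^4)_r\,(xq^2;q^4)_r\,(xq^3;q^4)_r,
\]
so the numerator product $(q^{-n},q^{1-n},q^{2-n},q^{3-n};q^4)_r$ becomes $(q^{-n};q)_{4r}$ and the denominator product $(q^{n+2},q^{n+3},q^{n+4},q^{n+5};q^4)_r$ becomes $(q^{n+2};q)_{4r}$. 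The very-well-poised factor becomes $(1-q^{8r+1})/(1-q)$, the $u,v$-factors survive as $(q^4u,q^4v,q^5/u,q^5/v;q^4)_r/(u,v,q/u,q/v;q^4)_r$, and the argument of the series contributes $q^{(4n-8)r}$. Since these choices terminate the bilateral $_{10}\psi_{10}$ both above and below, I am left with a finite bilateral sum over $r$.

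Next I would simplify the right-hand side. Evaluating the infinite products on the right of \eqref{chueq1} at these values (the factor $K$ is by definition $K_6(n)$), repeatedly merging pairs of base-$q^4$ products into base-$q^2$ products (for instance $(q^{n+2};q^4)_\infty(q^{n+4};q^4)_\infty=(q^{n+2};q^2)_\infty$) and telescoping $(q^4;q^4)_\infty/(q^{4n};q^4)_\infty=(q^4;q^4)_{n-1}$, one finds the product part equals $K_6(n)\,(q,q^2;q)_n(-q^2;q^2)_{n-1}/(q;q)_{2n}$; I have checked on small $n$ that this telescopes uniformly (the seeming dependence on $n\bmod 4$ cancels, e.g.\ using $(1-q^4)(1+q^4)=1-q^8$). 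Dividing both sides by $(q,q^2;q)_n=(q;q)_n(q^2;q)_n$ then reproduces the Bailey denominators $(q;q)_{n-j}(q;q)_{n+j}$ (the resulting pair is relative to $a=1$) and leaves precisely $\beta_n=K_6(n)(-q^2;q^2)_{n-1}/(q;q)_{2n}$; the normalization $K_6(0)=K_6(1)=K_6(2)=1$ secures $\alpha_0=\beta_0=1$.

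Then I would apply \eqref{q-njeq} to $(q^{-n};q)_{4r}$ on the $r\ge 0$ terms and \eqref{binneg} to rewrite the $r<0$ terms with positive index, splitting the bilateral sum into an $r\ge 0$ sum (whose $u,v$-factor is $(q^4u,q^4v,q^5/u,q^5/v;q^4)_r/(u,v,q/u,q/v;q^4)_r$) and an $r<0$ sum re-indexed by $r\mapsto -r$ (whose $u,v$-factor becomes $(q^3u,q^3v,q^4/u,q^4/v;q^4)_r/(u/q,v/q,1/u,1/v;q^4)_r$). A short computation shows the pre-split summand of the first sum carries $q^{8r^2-10r}$ and that of the second carries $q^{8r^2-6r}$. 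For part~(i) I would decompose the well-poised factor using
\begin{align*}
(1-q^{8r+1})q^{8r^2-10r}&=q^{8r^2-10r}(1-q^{n+4r+1})-q^{8r^2-2r+1}(1-q^{n-4r}),\\
(1-q^{-8r+1})q^{8r^2-6r}&=q^{8r^2-6r}(1-q^{n-4r+1})-q^{8r^2-14r+1}(1-q^{n+4r}),
\end{align*}
exactly as \eqref{6rt1} was used in Theorem~\ref{t3}; for part~(ii) I would instead use the variants
\begin{align*}
(1-q^{8r+1})q^{8r^2-10r}&=q^{8r^2-6r-n}\big((1-q^{n+4r+1})-(1-q^{n-4r})\big),\\
(1-q^{-8r+1})q^{8r^2-6r}&=q^{8r^2-10r-n}\big((1-q^{n-4r+1})-(1-q^{n+4r})\big),
\end{align*}
in the spirit of \eqref{6rt2}, where the pulled-out $q^{-n}$ is absorbed by replacing $\beta_n$ with $q^n\beta_n$, which is exactly the extra $q^n$ appearing in \eqref{chubp42}. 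In each identity the first piece has denominator $(q;q)_{n+4r}(q;q)_{n-4r}$ and feeds $\alpha_{4r}$, while the second piece has denominator $(q;q)_{n+4r+1}(q;q)_{n-4r-1}$ or $(q;q)_{n-4r+1}(q;q)_{n+4r-1}$ and feeds $\alpha_{4r+1}$ or $\alpha_{4r-1}$ respectively. Thus $\alpha_{4r}$ collects one contribution from each sum, $\alpha_{4r+1}$ only from the $r\ge 0$ sum, and $\alpha_{4r-1}$ only from the $r<0$ sum; no term ever lands on an index $\equiv 2\pmod 4$, which is why $\alpha_{4r+2}=0$. Reading off the surviving powers of $q$ and the $u,v$-factors then yields \eqref{chubp41} and \eqref{chubp42}.

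The proof involves no new idea beyond Theorem~\ref{t3}; the hard part is purely the bookkeeping. The two places most prone to error are the right-hand side simplification---checking that the ratio of base-$q^4$ products really does telescope to the clean $(-q^2;q^2)_{n-1}/(q;q)_{2n}$ uniformly in $n$---and the exponent/sign matching in the splitting identities, where every power (such as $8r^2-10r$, $8r^2-6r$, $8r^2-2r+1$, $8r^2-14r+1$) must be tracked so that the two pieces land on the intended residue classes. As a safeguard I would check the formulas against $n=1,2$, where one finds $\alpha_1=-q$, $\alpha_2=0$, and $\beta_1=1/\big((1-q)(1-q^2)\big)$, $\beta_2=(1+q^2)/(q;q)_4$.
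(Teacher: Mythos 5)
Your proposal is correct and follows essentially the same route as the paper's proof: the same specialization of Chu's formula after $q\to q^4$ (with $b=q^{-n}$, $c=q^{1-n}$, $d=q^{2-n}$, $e=q^{3-n}$), the same splitting of the bilateral sum into $r\ge 0$ and re-indexed $r<0$ parts via \eqref{binneg}, and literally the same four decomposition identities for parts (i) and (ii), with your verification data at $n=1,2$ also checking out. The only cosmetic difference is that you fix $a=q$ at the outset, whereas the paper keeps $a$ general through the intermediate identity \eqref{t41eq2} so that the same computation can be reused for Theorems \ref{t42} and \ref{t43}.
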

\begin{proof} The proof is initially  similar to the proof of Theorem \ref{t23},
except we replace $q$ with $q^4$ and set $b=q^{-n}$,
$c=q^{1-n}$, $d=q^{2-n}$ and $e=q^{3-n}$.  Instead of \eqref{t3eq}, we arrive at
\begin{multline}\label{t41eq}
\sum_{r\in \mathbb{Z}}\frac{1-a q^{8r}}{1-a} \frac{a^{2r}
q^{8r^2-12r}}{(a q;q)_{n+4r}(q;q)_{n-4r}}
\frac{\left(\displaystyle{q^4u,q^4v,aq^4/u,aq^4/v;q^4}\right)_r}
{\left(\displaystyle{u,v,a/u,a/v;q^4}\right)_r}\\
= K(a,q^{-n},q^{1-n},q^{2-n},q^{3-n},u,v,q^4)\frac{(q^4/a,a/q,a,a q;q^4)_{\infty}}{(q,q^2,q^3,a^2/q^2;q^4)_{\infty}} \frac{(-a/q;q^2)_{n}}{(a;q)_{2n}}.
\end{multline}
Next, separate the sum into terms of positive and negative index,
re-index the latter sum by replacing $r$ with $-r$ (and also using
\eqref{binneg}), to get
{\allowdisplaybreaks
\begin{multline}\label{t41eq2} \sum_{r\geq
0}\frac{1-a q^{8r}}{1-a} \frac{a^{2r} q^{8r^2-12r}}{(a
q;q)_{n+4r}(q;q)_{n-4r}}
\frac{\left(\displaystyle{q^4u,q^4v,aq^4/u,aq^4/v;q^4}\right)_r}
{\left(\displaystyle{u,v,a/u,a/v;q^4}\right)_r}\\
+\sum_{r\geq 1}\frac{1-a q^{-8r}}{1-a} \frac{a^{-2r}
q^{8r^2-4r}}{(a q;q)_{n-4r}(q;q)_{n+4r}}
\frac{\left(\displaystyle{q^4/u,q^4/v,uq^4/a,vq^4/a;q^4}\right)_r}
{\left(\displaystyle{1/u,1/v,u/a,v/a;q^4}\right)_r}\\
= K(a,q^{-n},q^{1-n},q^{2-n},q^{3-n},u,v,q^4)\frac{(q^4/a,a/q,a,a q;q^4)_{\infty}}{(q,q^2,q^3,a^2/q^2;q^4)_{\infty}} \frac{(-a/q;q^2)_{n}}{(a;q)_{2n}}.
\end{multline}
}

The proof for the Bailey pair at \eqref{chubp41} then follows, upon
setting $a=q$, simplifying the product side, and using the
identities
\begin{align*}
(1-q^{8r+1})q^{8r^2-10r}&=q^{8r^2-10r}(1-q^{n+4r+1})-q^{8r^2-2r+1}(1-q^{n-4r}),\\
(1-q^{-8r+1})q^{8r^2-6r}&=q^{8r^2-6r}(1-q^{n-4r+1})-q^{8r^2-14r+1}(1-q^{n+4r}).
\end{align*}

The proof for the Bailey pair at \eqref{chubp42} is similar, except we use the identities
\begin{align*}
(1-q^{8r+1})q^{8r^2-10r}&=q^{-n}q^{8r^2-6r}((1-q^{n+4r+1})-(1-q^{n-4r})),\\
(1-q^{-8r+1})q^{8r^2-6r}&=q^{-n}q^{8r^2-10r}((1-q^{n-4r+1})-(1-q^{n+4r})).
\end{align*}

\end{proof}

\begin{theorem}\label{t42}
Let $K_7(n):=K(q^2,q^{-n},q^{1-n},q^{2-n},q^{3-n},u,v,q^4)$ for $n\geq 0$. Then
(i) the
pair of sequences $(\alpha_n, \beta_n)$ is a Bailey pair with
respect to $a=q$, where $\alpha_0=\beta_0=1$,
and {\allowdisplaybreaks
\begin{align}\label{chubp43}
\alpha_{4r}&=
\frac{\left(\displaystyle{q^4u,q^4v, q^6/u, q^6/v;q^4}\right)_r}
   {\left(\displaystyle{u,v,q^2/u,q^2/v;q^4}\right)_r}q^{8r^2-8r}\\
 \alpha_{4r+1}&=
-\frac{\left(\displaystyle{q^4u,q^4v, q^6/u, q^6/v;q^4}\right)_r}
   {\left(\displaystyle{u,v,q^2/u,q^2/v;q^4}\right)_r}q^{8r^2+2}\notag\\
\alpha_{4r-1}&=
\frac{\left(\displaystyle{q^2u,q^2v, q^4/u, q^4/v;q^4}\right)_r}
   {\left(\displaystyle{u/q^2,v/q^2,1/u,1/v;q^4}\right)_r}q^{8r^2-8r}\notag\\
\alpha_{4r-2}&=
-\frac{\left(\displaystyle{q^2u,q^2v, q^4/u, q^4/v;q^4}\right)_r}
   {\left(\displaystyle{u/q^2,v/q^2,1/u,1/v;q^4}\right)_r}q^{8r^2-16r+2}\notag\\
   \beta_n&= K_7(n) \frac{(-q ;q^2)_{n}}{(q^2;q)_{2n}};\notag
\end{align}
}
(ii)  the pair of sequences $(\alpha_n, \beta_n)$ is a Bailey pair
with respect to $a=q$, where $\alpha_0=\beta_0=1$,
and {\allowdisplaybreaks
\begin{align}\label{chubp44}
\alpha_{4r}&=
\frac{\left(\displaystyle{q^4u,q^4v, q^6/u, q^6/v;q^4}\right)_r}
   {\left(\displaystyle{u,v,q^2/u,q^2/v;q^4}\right)_r}q^{8r^2-4r}\\
 \alpha_{4r+1}&=
-\frac{\left(\displaystyle{q^4u,q^4v, q^6/u, q^6/v;q^4}\right)_r}
   {\left(\displaystyle{u,v,q^2/u,q^2/v;q^4}\right)_r}q^{8r^2-4r}\notag\\
\alpha_{4r-1}&=
\frac{\left(\displaystyle{q^2u,q^2v, q^4/u, q^4/v;q^4}\right)_r}
   {\left(\displaystyle{u/q^2,v/q^2,1/u,1/v;q^4}\right)_r}q^{8r^2-12r}\notag\\
\alpha_{4r-2}&=
-\frac{\left(\displaystyle{q^2u,q^2v, q^4/u, q^4/v;q^4}\right)_r}
   {\left(\displaystyle{u/q^2,v/q^2,1/u,1/v;q^4}\right)_r}q^{8r^2-12r}\notag\\
   \beta_n&= K_7(n) \frac{q^n(-q ;q^2)_{n}}{(q^2;q)_{2n}}.\notag
\end{align}
}
\end{theorem}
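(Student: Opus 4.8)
The plan is to mirror the proof of Theorem~\ref{t41} almost verbatim, the sole difference being that we specialize the free parameter $a$ to $q^2$ rather than to $q$. The bilateral identity at \eqref{t41eq} and its separated form \eqref{t41eq2} were derived in the proof of Theorem~\ref{t41} for a \emph{general} value of $a$ (only $b=q^{-n}$, $c=q^{1-n}$, $d=q^{2-n}$, $e=q^{3-n}$ and the replacement $q\mapsto q^4$ entered, together with \eqref{binneg} for the negative-index reindexing). I would therefore quote \eqref{t41eq2} as it stands and set $a=q^2$ throughout. The product side then simplifies cleanly: the infinite-product factor becomes $(q^2,q,q^2,q^3;q^4)_\infty/(q,q^2,q^3,q^2;q^4)_\infty=1$ since numerator and denominator agree as multisets (unlike the case $a=q$ of Theorem~\ref{t41}, no $0/0$ limit is needed), while $(-a/q;q^2)_n/(a;q)_{2n}=(-q;q^2)_n/(q^2;q)_{2n}$. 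Hence the right-hand side of \eqref{t41eq2} collapses to exactly $K_7(n)\,(-q;q^2)_n/(q^2;q)_{2n}$, the asserted $\beta_n$ of part~(i).

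For part~(i), with $a=q^2$ the positive-index summand of \eqref{t41eq2} carries the factor $(1-q^{8r+2})q^{8r^2-8r}$ and the reindexed negative-index summand carries $(1-q^{-8r+2})q^{8r^2-8r}$. I would split each using
\begin{align*}
(1-q^{8r+2})q^{8r^2-8r}&=q^{8r^2-8r}(1-q^{n+4r+2})-q^{8r^2+2}(1-q^{n-4r}),\\
(1-q^{-8r+2})q^{8r^2-8r}&=q^{8r^2-8r}(1-q^{n-4r+2})-q^{8r^2-16r+2}(1-q^{n+4r}),
\end{align*}
exactly as \eqref{6rt1} was used in Theorem~\ref{t3}. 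In each of the four resulting families the factor $1-q^{n\pm 4r+c}$ absorbs into the adjacent $(q^3;q)$- or $(q;q)$-Pochhammer, and after rewriting $(q^2;q)_m=(q;q)_{m+1}/(1-q)$ where needed to shift the index, every term takes the shape $\alpha_k/\big((q;q)_{n-k}(q^2;q)_{n+k}\big)$ required by \eqref{bpeq} with $a=q$. The positive-index halves yield $\alpha_{4r}$ (coefficient $q^{8r^2-8r}$) and $\alpha_{4r+1}$ (coefficient $-q^{8r^2+2}$) carrying $(q^4u,q^4v,q^6/u,q^6/v;q^4)_r/(u,v,q^2/u,q^2/v;q^4)_r$, while the negative-index halves yield $\alpha_{4r-1}$ (coefficient $q^{8r^2-8r}$) and $\alpha_{4r-2}$ (coefficient $-q^{8r^2-16r+2}$) carrying $(q^2u,q^2v,q^4/u,q^4/v;q^4)_r/(u/q^2,v/q^2,1/u,1/v;q^4)_r$; these are precisely \eqref{chubp43}. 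Because the four halves occupy the four distinct residues $0,1,3,2\pmod 4$, none collide, so—unlike the mod~$3$ case of Theorem~\ref{t5}—no extra re-indexing $r\mapsto r+1$ and no merging of terms is needed. Finally $K_7(0)=1$ secures $\alpha_0=\beta_0=1$.

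For part~(ii) I would instead split using the companion identities
\begin{align*}
(1-q^{8r+2})q^{8r^2-8r}&=q^{-n}q^{8r^2-4r}\big((1-q^{n+4r+2})-(1-q^{n-4r})\big),\\
(1-q^{-8r+2})q^{8r^2-8r}&=q^{-n}q^{8r^2-12r}\big((1-q^{n-4r+2})-(1-q^{n+4r})\big),
\end{align*}
the analogues of \eqref{6rt2}. The common factor $q^{-n}$ is independent of $r$, so it transfers to the product side, multiplying $\beta_n$ by $q^n$ and producing $\beta_n=K_7(n)\,q^n(-q;q^2)_n/(q^2;q)_{2n}$; the four families then deliver the entries of \eqref{chubp44} with coefficients $q^{8r^2-4r}$, $-q^{8r^2-4r}$, $q^{8r^2-12r}$, $-q^{8r^2-12r}$.

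The main obstacle is the bookkeeping in the index-shifting step: each of the four split-halves has a denominator mixing a $(q^2;q)$-factor with a $(q;q)$-factor, and one must absorb the trailing $1-q^{n\pm 4r+c}$ into the correct Pochhammer and rewrite $(q^2;q)_m=(q;q)_{m+1}/(1-q)$ in order to read off the right index $k$—so that, for instance, the negative-index first half is correctly identified as $\alpha_{4r-1}$ rather than a spurious $\alpha_{-4r}$—and the right power of $q$. This is purely mechanical but must be executed exactly; everything else is a transcription of the arguments already given for Theorems~\ref{t41} and~\ref{t3}.
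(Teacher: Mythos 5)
Your proposal is correct and follows essentially the same route as the paper: the authors likewise set $a=q^2$ in \eqref{t41eq2}, simplify the product side, absorb the $1-q^2$ factors into the $(aq;q)_{n\pm 4r}$ Pochhammers, and split the summands using precisely the four identities you wrote down (for \eqref{chubp43} and \eqref{chubp44} respectively). Your additional observations---that the product side collapses to $1$ without any $0/0$ limit, and that the four families land in distinct residues mod $4$ so no re-indexing is needed---are accurate refinements of what the paper leaves implicit.
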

\begin{proof}
Set $a=q^2$ in \eqref{t41eq2}, simplify the product side and absorb the $1-q^2$ terms on the sum side into what were previously the $(aq;q)_{n+4r}$ and $(aq;q)_{n-4r}$ factors.
The  pair at \eqref{chubp43}  follows after using the identities
\begin{align*}
(1-q^{8r+2})q^{8r^2-8r}&=q^{8r^2-8r}(1-q^{n+4r+2})-q^{8r^2+2}(1-q^{n-4r}),\\
(1-q^{-8r+2})q^{8r^2-8r}&=q^{8r^2-8r}(1-q^{n-4r+2})-q^{8r^2-16r+2}(1-q^{n+4r}).
\end{align*}

The pair at \eqref{chubp44} follows similarly, after employing the identities
\begin{align*}
(1-q^{8r+2})q^{8r^2-8r}&=q^{-n}q^{8r^2-4r}((1-q^{n+4r+2})-(1-q^{n-4r})),\\
(1-q^{-8r+2})q^{8r^2-8r}&=q^{-n}q^{8r^2-12r}((1-q^{n-4r+2})-(1-q^{n+4r})).
\end{align*}
\end{proof}

\begin{theorem}\label{t43}
Let $K_8(0)=1$,
\begin{equation*}
K_8(1)=\frac{(q-u) \left(q^2-u\right) (q-v)
   \left(q^2-v\right)}{\left(q^3-u\right) (u-1) \left(q^3-v\right)
   (v-1)}\\-\frac{(q-1)^4 (q+1)^2 \left(q^2+q+1\right) u v}{q
   \left(q^3-u\right) (u-1) \left(q^3-v\right) (v-1)},
\end{equation*}
and $K_8(n):=K(q^3,q^{-n},q^{1-n},q^{2-n},q^{3-n},u,v,q^4)$ for
$n\geq 2$. Then \\
(i) the pair of sequences $(\alpha_n, \beta_n)$ is a Bailey pair with respect to $a=q^2$, where $\alpha_0=\beta_0=1$,
and {\allowdisplaybreaks
\begin{align}\label{chubp45}
\alpha_{4r}&= \frac{\left(\displaystyle{q^4u,q^4v, q^7/u,
q^7/v;q^4}\right)_r}
   {\left(\displaystyle{u,v,q^3/u,q^3/v;q^4}\right)_r}\,q^{8r^2-6r}\\
 \alpha_{4r+1}&=
 -\frac{\left(\displaystyle{q^4u,q^4v, q^7/u,
q^7/v;q^4}\right)_r}
   {\left(\displaystyle{u,v,q^3/u,q^3/v;q^4}\right)_{r}}\,q^{8r^2+2r+3}
-\frac{\left(\displaystyle{q^4/u,q^4/v, q u, q
v;q^4}\right)_{r+1}}
   {\left(\displaystyle{1/u,1/v,u/q^3,v/q^3;q^4}\right)_{r+1}}\,q^{8r^2-2r-7}\notag\\
\alpha_{4r-1}&= 0\notag\\
\alpha_{4r-2}&= \frac{\left(\displaystyle{q^4/u,q^4/v, q u, q
v;q^4}\right)_{r}}
   {\left(\displaystyle{1/u,1/v,u/q^3,v/q^3;q^4}\right)_{r}}\,q^{8r^2-10r}\notag\\
   \beta_n&= K_8(n) \frac{(-q^2 ;q^2)_{n}}{(q^3;q)_{2n}};\notag
\end{align}
} (ii)  the pair of sequences $(\alpha_n, \beta_n)$ is a Bailey
pair with respect to $a=q^2$, where $\alpha_0=\beta_0=1$, and
{\allowdisplaybreaks
\begin{align}\label{chubp46}
\alpha_{4r}&= \frac{\left(\displaystyle{q^4u,q^4v, q^7/u,
q^7/v;q^4}\right)_r}
   {\left(\displaystyle{u,v,q^3/u,q^3/v;q^4}\right)_r}\,q^{8r^2-2r}\\
 \alpha_{4r+1}&=
 -\frac{\left(\displaystyle{q^4u,q^4v, q^7/u,
q^7/v;q^4}\right)_r}
   {\left(\displaystyle{u,v,q^3/u,q^3/v;q^4}\right)_{r}}\,q^{8r^2-2r}
-\frac{\left(\displaystyle{q^4/u,q^4/v, q u, q
v;q^4}\right)_{r+1}}
   {\left(\displaystyle{1/u,1/v,u/q^3,v/q^3;q^4}\right)_{r+1}}\,q^{8r^2+2r-6}\notag\\
\alpha_{4r-1}&= 0\notag\\
\alpha_{4r-2}&= \frac{\left(\displaystyle{q^4/u,q^4/v, q u, q
v;q^4}\right)_{r}}
   {\left(\displaystyle{1/u,1/v,u/q^3,v/q^3;q^4}\right)_{r}}\,q^{8r^2-14r}\notag\\
   \beta_n&= K_8(n) q^n \frac{(-q^2 ;q^2)_{n}}{(q^3;q)_{2n}};\notag
\end{align}
}
\end{theorem}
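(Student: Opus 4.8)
The plan is to follow verbatim the template set in the proofs of Theorems~\ref{t41} and~\ref{t42}, since all three rest on the single master identity~\eqref{t41eq2}; here one merely specializes Chu's parameter to $a=q^3$. First I would substitute $a=q^3$ into the right-hand side of~\eqref{t41eq2}. Because $q^4/a=q$, $a/q=q^2$, $a=q^3$, $aq=q^4$ and $a^2/q^2=q^4$, the infinite-product prefactor $(q^4/a,a/q,a,aq;q^4)_\infty/(q,q^2,q^3,a^2/q^2;q^4)_\infty$ collapses to $(q,q^2,q^3,q^4;q^4)_\infty/(q,q^2,q^3,q^4;q^4)_\infty=1$, while the surviving factor $(-a/q;q^2)_n/(a;q)_{2n}$ becomes $(-q^2;q^2)_n/(q^3;q)_{2n}$, exactly the product in the claimed $\beta_n$. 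The pair is relative to $a_{BP}=q^2$, so~\eqref{bpeq} demands the denominator $(q;q)_{n-j}(q^3;q)_{n+j}$.

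Next I would treat the left-hand side of~\eqref{t41eq2} with $a=q^3$. The normalizing factor $\tfrac{1-aq^{8r}}{1-a}$ becomes $\tfrac{1-q^{8r+3}}{1-q^3}$, and the powers $a^{\pm2r}$ turn the two base exponents into $q^{8r^2-6r}$ (first sum) and $q^{8r^2-10r}$ (second sum, already reflected via~\eqref{binneg}); the $q^4$-shifted parameters specialize to $q^7/u,q^3/u,\dots$ in the first sum and to $qu,u/q^3,\dots$ in the second, matching~\eqref{chubp45}. For part (i) I would split each $\tfrac{1-q^{\pm8r+3}}{1-q^3}$ using the telescoping identities
\begin{align*}
(1-q^{8r+3})q^{8r^2-6r}&=q^{8r^2-6r}(1-q^{n+4r+3})-q^{8r^2+2r+3}(1-q^{n-4r}),\\
(1-q^{-8r+3})q^{8r^2-10r}&=q^{8r^2-10r}(1-q^{n-4r+3})-q^{8r^2-18r+3}(1-q^{n+4r}).
\end{align*}
Via $(q^4;q)_m=(q^3;q)_{m+1}/(1-q^3)$ the leading pieces absorb the spurious $1-q^3$ and reduce $(q^4;q)_{n\pm4r}$ to $(q^3;q)_{n\pm4r}$, producing $\alpha_{4r}$ and --- after rewriting the ``swapped'' product through $(q;q)_{n+4r}(q^3;q)_{n-4r}=(q;q)_{n-4r+2}(q^3;q)_{n+4r-2}$ --- the term $\alpha_{4r-2}$; the trailing pieces fall in the class $1\bmod 4$ and, after re-indexing $r\mapsto r+1$ in the second of them, combine into the two-term $\alpha_{4r+1}$. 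Nothing ever lands in the class $3\bmod 4$, which is precisely why $\alpha_{4r-1}=0$. Part (ii) is identical except that I would instead use the companion identities
\begin{align*}
(1-q^{8r+3})q^{8r^2-6r}&=q^{8r^2-2r-n}\big((1-q^{n+4r+3})-(1-q^{n-4r})\big),\\
(1-q^{-8r+3})q^{8r^2-10r}&=q^{8r^2-14r-n}\big((1-q^{n-4r+3})-(1-q^{n+4r})\big),
\end{align*}
whose overall factor $q^{-n}$ migrates to the product side to create the extra $q^n$ in $\beta_n$, giving~\eqref{chubp46}.

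The one genuinely non-routine point --- and the step I expect to cause trouble --- is the isolated behaviour at $n=0$ and $n=1$. With base $q^4$ and $a=q^3$ one has $bcde=q^{6-4n}$ and $a^2=q^6$, so the two factors $a^2-bcde=q^6-q^{6-4n}$ and $a^2-bcde\,q^4=q^6-q^{10-4n}$ in the denominator of~\eqref{Keq} vanish at $n=0$ and $n=1$ respectively; hence the closed form for $K$ is singular there and cannot be read off directly. The right-hand side of~\eqref{t41eq2} is nevertheless finite at these $n$, so I would define $K_8(0)$ and $K_8(1)$ to be the values forced by the Bailey relation itself. The requirement $\beta_0=1$ (needed since $\alpha_0=1$) gives $K_8(0)=1$ at once; to obtain $K_8(1)$ I would compute $\beta_1=\sum_{j}\alpha_j/[(q;q)_{1-j}(q^3;q)_{1+j}]$ from the finitely many relevant $\alpha_j$ already found and equate it to $K_8(1)(-q^2;q^2)_1/(q^3;q)_2$, solving for $K_8(1)$; a short computation (best verified on a computer algebra system, as with $K_0$--$K_3$) returns the stated rational expression. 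For $n\ge2$ no degeneration occurs and $K_8(n)=K(q^3,q^{-n},q^{1-n},q^{2-n},q^{3-n},u,v,q^4)$ directly, which completes both pairs.
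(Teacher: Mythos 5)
Your proposal is correct and follows essentially the same route as the paper's proof: the same specialization $a=q^3$ in \eqref{t41eq2}, the identical pair of telescoping identities for part (i) and for part (ii), the same absorption of the $1-q^3$ factor into the $q$-Pochhammer denominators, and the same re-indexing $r\mapsto r+1$ that moves the leftover terms from the class $j=4r-3$ into $\alpha_{4r+1}$. The only divergence is in pinning down the degenerate values: the paper obtains $K_8(1)$ by a careful order of substitutions in Chu's identity (as explained in the remark following the theorem), whereas you fix $K_8(0)$ and $K_8(1)$ by directly verifying the Bailey relation \eqref{bpeq} at $n=0,1$ against the stated expressions; both are legitimate.
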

\begin{remark} The reason $K_8(1)$ does not fit into the formula for  $K_8(n)$ for $n>1$ is that it is necessary to be careful with the order in which (after replacing $q$ with $q^4$) we set $a=q^3$, $b=1/q$, $c=1$, $d=q$ and $e=q^2$, in order to get the series at \eqref{chueq1} to converge. It is easy to see that making those substitutions simultaneously gives $a^2/q^4bcde=1$, contradicting the requirement that $|a^2/q^4bcde|<1$ in the series. One way around this is to first set $c=1$ and $b=a/q^4$, causing the series to terminate above and below, after which the other replacements can be made.

Note also that, with the stated choices for the parameters,
the $a^2-bcdeq^4$ factor in the denominator of the expression for $K$ at \eqref{Keq} vanishes, and in fact these choices also cause the numerator of $K$ to vanish. It is cancelling these ``zero factors" that make the expression for $K_8(1)$ different.    The order of substitutions described above causes $K(q^3,q^{-1},1,q,q^{2},u,v,q^4)$ to have the value assigned to $K_8(1)$, whereas for $n>1$, $K(q^3,q^{-n},q^{1-n},q^{2-n},q^{3-n},$ $u,v,q^4)$ is independent of the order of substitutions.

We also note that a similar situation occurs in some of the other theorems above.
\end{remark}

\begin{proof}
The proof is similar to the proof of Theorem \ref{t42}, except we
set $a=q^3$ in \eqref{t41eq2}. The pair at \eqref{chubp45}  follows
after using the identities
\begin{align*}
(1-q^{8r+3})q^{8r^2-6r}&=q^{8r^2-6r}(1-q^{n+4r+3})-q^{8r^2+2r+3}(1-q^{n-4r}),\\
(1-q^{-8r+3})q^{8r^2-10r}&=q^{8r^2-10r}(1-q^{n-4r+3})-q^{8r^2-18r+3}(1-q^{n+4r}),
\end{align*}
and then replacing $r$ with $r+1$ in the sum corresponding to the
second term in the second identity above (so that this sum
implicitly defines part of $\alpha_{4r+1}$ instead of
$\alpha_{4r-3}$). The pair at \eqref{chubp46} follows similarly,
after using the identities {\allowdisplaybreaks
\begin{align*}
(1-q^{8r+3})q^{8r^2-6r}&=q^{-n}q^{8r^2-2r}((1-q^{n+4r+3})-(1-q^{n-4r})),\\
(1-q^{-8r+3})q^{8r^2-10r}&=q^{-n}q^{8r^2-14r}((1-q^{n-4r+3})-(1-q^{n+4r})).
\end{align*}
}
\end{proof}

The six Bailey pairs in Slater's \textbf{K} table \cite[page 471]{S51} follow from the three theorems in this section, upon letting $u, v \to \infty$.
\begin{corollary}
The following pairs of sequences $(\alpha_n, \beta_n)$, with $\alpha_0=\beta_0=1$, are Bailey pairs with respect to the stated value of $a$.

{\allowdisplaybreaks
\begin{align}\label{chubp41a}
\alpha_{4r}&=
q^{8r^2-2r}+ q^{8r^2+2r}\\
 \alpha_{4r+1}&=
-q^{8r^2+6r+1}\notag\\
\alpha_{4r-1}&=
-q^{8r^2-6r+1}\notag\\
   \alpha_{4r-2}&=0 \notag\\
\beta_n&= \frac{(-q^2 ;q^2)_{n-1}}{(q;q)_{2n}}\text{ with respect to $a=1$};\notag
\end{align}
} {\allowdisplaybreaks
\begin{align}\label{chubp42a}
\alpha_{4r}&=
q^{8r^2+2r}+ q^{8r^2-2r}\\
 \alpha_{4r+1}&=
-q^{8r^2+2r}\notag\\
\alpha_{4r-1}&=
-q^{8r^2-2r}\notag\\
\alpha_{4r-2}&=0 \notag\\
\beta_n&= \frac{q^n\,(-q^2 ;q^2)_{n-1}}{(q;q)_{2n}}\text{ with respect to $a=1$};\notag
\end{align}
}
 {\allowdisplaybreaks
\begin{align}\label{chubp43a}
\alpha_{4r}&=
q^{8r^2}\\
 \alpha_{4r+1}&=
-q^{8r^2+8r+2}\notag\\
\alpha_{4r-1}&=
q^{8r^2}\notag\\
\alpha_{4r-2}&=
-q^{8r^2-8r+2}\notag\\
   \beta_n&=  \frac{(-q ;q^2)_{n}}{(q^2;q)_{2n}}\text{ with respect to $a=q$};\notag
\end{align}
}
\begin{align}\label{chubp44a}
\alpha_{4r}&=
q^{8r^2+4r}\\
 \alpha_{4r+1}&=
-q^{8r^2+4r}\notag\\
\alpha_{4r-1}&=
q^{8r^2-4r}\notag\\
\alpha_{4r-2}&=
-q^{8r^2-4r}\notag\\
   \beta_n&=  \frac{q^n(-q ;q^2)_{n}}{(q^2;q)_{2n}}\text{ with respect to $a=q$};\notag
\end{align}
{\allowdisplaybreaks
\begin{align}\label{chubp45a}
\alpha_{4r}&= q^{8r^2+2r}\\
 \alpha_{4r+1}&=
 -q^{8r^2+10r+3}-q^{8r^2+6r+1}\notag\\
\alpha_{4r-1}&= 0\notag\\
\alpha_{4r-2}&= q^{8r^2-2r}\notag\\
   \beta_n&= \frac{(-q^2 ;q^2)_{n}}{(q^3;q)_{2n}}\text{ with respect to $a=q^2$};\notag
\end{align}
} {\allowdisplaybreaks
\begin{align}\label{chubp46a}
\alpha_{4r}&= q^{8r^2+6r}\\
 \alpha_{4r+1}&=
 -q^{8r^2+6r}-q^{8r^2+10r+2}\notag\\
\alpha_{4r-1}&= 0\notag\\
\alpha_{4r-2}&= q^{8r^2-6r}\notag\\
   \beta_n&=  q^n \frac{(-q^2 ;q^2)_{n}}{(q^3;q)_{2n}}\text{ with respect to $a=q^2$}.\notag
\end{align}
}
\end{corollary}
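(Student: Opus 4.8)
The plan is to obtain each of the six pairs \eqref{chubp41a}--\eqref{chubp46a} directly as the $u,v\to\infty$ limit of the corresponding pair in Theorems \ref{t41}, \ref{t42} and \ref{t43}, and to argue that the Bailey-pair property survives the limit. The key structural point is that for each fixed $n$ the defining relation \eqref{bpeq} holds identically as a rational function of $u$ and $v$; since it is a \emph{finite} sum, it suffices to show that each entry $\alpha_j$ $(0\le j\le n)$ and each $\beta_n$ converges as $u,v\to\infty$. Once that is established, the limiting sequences automatically satisfy \eqref{bpeq}, the normalization $\alpha_0=\beta_0=1$ is preserved, and the value of $a$ is unchanged. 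So the whole task reduces to computing two kinds of termwise limits.

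First I would dispose of the $\beta_n$. Every $\beta_n$ in the three theorems equals $K_\ast(n)$ times a factor free of $u,v$, so it is enough to show $K(a,b,c,d,e,u,v,q)\to 1$ as $u,v\to\infty$ for the remaining parameters held fixed. This follows from a leading-order reading of \eqref{Keq}: the denominator factor $(1-u)(a-u)(1-v)(a-v)$ grows like $u^2v^2$, while among the four bracketed summands in the numerator only $\left(u^2+a\right)\left(v^2+a\right)\left(a^2-\sigma_4\right)\left(a^2-q\sigma_4\right)$ is of joint degree four in $(u,v)$, the other three being $O(u^2v)$ or smaller; its leading part is $u^2v^2\left(a^2-\sigma_4\right)\left(a^2-q\sigma_4\right)$. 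Since $\sigma_4=bcde$, this cancels exactly against the $\left(a^2-bcde\right)\left(a^2-bcdeq\right)$ in the denominator, giving $K\to 1$. The same degree count disposes of the anomalous value $K_8(1)$: its first summand is a ratio of two degree-four polynomials in $(u,v)$ with equal leading coefficient, hence tends to $1$, and its second summand is $O(1/uv)$, hence tends to $0$. Thus $K_6(n),K_7(n),K_8(n)\to 1$ and each $\beta_n$ collapses to the stated $u,v$-free expression.

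Next I would compute the $\alpha_n$ limits. Every $u,v$-dependent factor is a ratio of $q^4$-shifted Pochhammer symbols in which, for each of $u$ and $v$, exactly one symbol in the numerator and one in the denominator \emph{grow} (the arguments $q^a u$, $q^a v$) while the others \emph{decay} (the arguments $q^d/u$, $q^d/v$); this balance is what makes the divergent part cancel. Using $(q^a w;q^4)_r\sim(-w)^r q^{ar}q^{2r(r-1)}$ as $w\to\infty$, the growing ratios satisfy $(q^a w;q^4)_r/(q^b w;q^4)_r\to q^{(a-b)r}$, while the decaying ratios $(q^d/w;q^4)_r/(q^e/w;q^4)_r\to 1$ since both tend to $(0;q^4)_r=1$. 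For example, in \eqref{chubp41} the first summand of $\alpha_{4r}$ carries $(q^4u,q^4v,q^5/u,q^5/v;q^4)_r/(u,v,q/u,q/v;q^4)_r\to q^{4r}\cdot q^{4r}\cdot 1\cdot 1=q^{8r}$, so that summand tends to $q^{8r}q^{8r^2-10r}=q^{8r^2-2r}$, while the second summand gives $q^{4r}\cdot q^{4r}\cdot q^{8r^2-6r}=q^{8r^2+2r}$, recovering $\alpha_{4r}=q^{8r^2-2r}+q^{8r^2+2r}$ of \eqref{chubp41a}. Every other entry of all six pairs follows by the identical monomial bookkeeping, which I would record once as a single table of $(a,b)$-exponent differences rather than repeat.

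The bulkiest but wholly routine part is this last matching of exponents; the single genuinely conceptual step, which I would present most carefully, is the claim $K\to 1$, since it is precisely what forces the intricate free-parameter pairs of Theorems \ref{t41}--\ref{t43} to degenerate into the clean mod-$4$ pairs of Slater's \textbf{K} table. Everything else is the termwise passage to the limit justified in the first paragraph.
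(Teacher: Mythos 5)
Your proposal is correct and takes exactly the paper's route: the paper derives this corollary simply by letting $u,v\to\infty$ in the pairs of Theorems \ref{t41}--\ref{t43}, which is precisely your plan. Your write-up additionally supplies the justifications the paper leaves implicit (termwise passage to the limit in the finite relation \eqref{bpeq}, the verification that $K\to 1$ including the anomalous $K_8(1)$, and the exponent bookkeeping for the $\alpha$'s), all of which are accurate.
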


\section{Identities of the Rogers-Ramanujan-Slater type
and False Theta Series Identities}\label{secids}

We use the Bailey pairs found in the previous section to derive
several new series-product identities, and also to give new proofs of
some general identities due to Ramanujan, Andrews and others. We also remark that we sometimes use a more general case of
the $_6\psi_6$ than the Jacobi Triple Product Identity, in contrast
to how Slater derived her products.

\subsection{General identities containing one or more free parameters}
We first give new proofs for two
quite general identities of Ramanujan (\cite[page 33]{R88}, see also
R1 and R2 on page 8 of \cite{McLSZ08}). One reason these two identities are of interest is that each leads to infinitely many identities of Rogers-Ramanujan type (set $z=\pm q^{a/b}$, where $a$ and $b>0$ are integers, and then replace $q$ with $q^b$).

\begin{theorem}\label{r1}
For $|q|<1$ and $z\in \mathbb{C}\setminus\{ 0 \}$,
\begin{equation}\label{geneqr1}
\sum_{n=0}^{\infty}\frac{(q/z;q)_{n}(z;q)_nq^{n^2}}{(q;q)_{2n}}=
\frac{(qz,q^2/z,q^3;q^3)_{\infty}}{(q;q)_{\infty}}.
\end{equation}
\end{theorem}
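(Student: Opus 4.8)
The plan is to realize the series in \eqref{geneqr1} as the $\beta$-side of the Bailey transform \eqref{Seq1} in its limiting case $y,z\to\infty$, and then to sum the resulting $\alpha$-side by the Jacobi triple product. Concretely, I would work with the Bailey pair relative to $a=1$ given by $\beta_n=(z;q)_n(q/z;q)_n/(q;q)_{2n}$ together with
\[
\alpha_0=1,\qquad \alpha_n=(-1)^n\left(z^n q^{n(n-1)/2}+z^{-n}q^{n(n+1)/2}\right)\quad(n\ge 1).
\]
This pair can be verified directly from the defining relation \eqref{bpeq} with $a=1$ (the cases $n=0,1$ are immediate, and the general case is a short inversion), or recovered as a specialization of one of the general mod-$2$/mod-$3$ pairs constructed above in which the $K$-factor collapses to $1$ for the relevant choice of free parameters.

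Granting the pair, I would let $y\to\infty$ and $z\to\infty$ in \eqref{Seq1}. Using $(y;q)_n\sim(-y)^n q^{n(n-1)/2}$ and the analogous estimate for $(z;q)_n$, one finds $(y,z;q)_n(aq/yz)^n\to a^n q^{n^2}$ and $\tfrac{(y,z;q)_n}{(aq/y,aq/z;q)_n}(aq/yz)^n\to a^n q^{n^2}$, while the infinite-product prefactor tends to $1/(aq;q)_\infty$. With $a=1$ this yields
\[
\sum_{n=0}^\infty q^{n^2}\beta_n=\frac{1}{(q;q)_\infty}\sum_{n=0}^\infty q^{n^2}\alpha_n.
\]
The left-hand side is exactly the series in \eqref{geneqr1}, so it remains only to evaluate $\sum_{n\ge0}q^{n^2}\alpha_n$.

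The key simplification is that the $\alpha_n$ were chosen so that the $n$-th term reproduces precisely the $r=\pm n$ terms of a bilateral theta series: one has $q^{n^2}\alpha_n=(-1)^n z^n q^{(3n^2-n)/2}+(-1)^n z^{-n}q^{(3n^2+n)/2}$, and summing over $n\ge0$ (with the $n=0$ term counted once) folds into $\sum_{r=-\infty}^{\infty}(-1)^r z^r q^{(3r^2-r)/2}$. By the Jacobi triple product, taken with base $q^3$ and $x=qz$, this equals $(qz,q^2/z,q^3;q^3)_\infty$, and dividing by $(q;q)_\infty$ produces the right-hand side of \eqref{geneqr1}. The main obstacle is the first step: producing the companion $\alpha_n$ in closed form (either by a clean inversion of $\beta_n=(z;q)_n(q/z;q)_n/(q;q)_{2n}$ or by tracking the $K$-factor through the correct specialization of an earlier pair) and then correctly folding the one-sided $\alpha$-sum into the two-sided theta series before invoking the triple product; the limit computation and the product evaluation are then routine.
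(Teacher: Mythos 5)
Your proposal is correct, but it takes a genuinely different route from the paper's proof. The paper proves \eqref{geneqr1} by letting $e\to 0$ in the pair \eqref{sgen35} (a mod-$3$ pair relative to $a=q$, whose $\beta_n=q^{n^2-n}/(q;q)_{2n}$ is free of $z$), inserting it into \eqref{Seq1} with the \emph{finite} choice $y=q/z$ (so the $z$-dependence enters through the transform parameters), and then recognizing the two resulting $\alpha$-side series as a special case of Bailey's $_6\psi_6$ summation \eqref{baileyeq1} (replace $q$ by $q^3$, let $d,e\to\infty$, set $a=q$, $b=q/z$, $c=z$) --- in line with the paper's stated theme of using more of the $_6\psi_6$ than just the triple product. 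You instead bake the $z$-dependence into the Bailey pair itself, taking the classical pair relative to $a=1$ with $\beta_n=(z,q/z;q)_n/(q;q)_{2n}$, and use the $y,z\to\infty$ limit \eqref{Seq1a} of the transform, so the $\alpha$-side folds into a bilateral series summed by the Jacobi triple product; this is essentially the Slater-style argument (and close to the Andrews--Berndt proof of Entry 5.3.1, which the paper cites as a ``different proof''). Your route buys elementarity --- only the triple product is needed at the end --- at the cost of having to certify the Bailey pair separately, whereas the paper's route stays entirely inside the machinery built from Chu's identity. Two remarks on your deferred step: (i) the side claim that your pair could be ``recovered as a specialization of one of the general mod-$2$/mod-$3$ pairs'' above is doubtful --- those pairs vanish on a residue class or have a different two-term structure, and none visibly specializes to yours --- so the direct verification must carry the argument; (ii) that verification is indeed short: multiplying \eqref{bpeq} (with $a=1$) by $(q;q)_{2n}$ and combining the $\pm j$ terms, your pair is equivalent to the finite form of the Jacobi triple product
\begin{equation*}
\sum_{j=-n}^{n}(-1)^j q^{\binom{j}{2}} z^j \qbin{2n}{n+j}{q}=(z;q)_n\,(q/z;q)_n,
\end{equation*}
a standard consequence of the $q$-binomial theorem. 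Granting that, your limit computation, the folding of the one-sided $\alpha$-sum into $\sum_{r\in\mathbb{Z}}(-1)^r(zq)^r q^{3\binom{r}{2}}$, and the evaluation $(qz,q^2/z,q^3;q^3)_\infty$ are all correct, so the identity follows.
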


\begin{proof}
Let $e \to 0$ in the Bailey pair at \eqref{sgen35}, and then insert
the resulting pair into \eqref{Seq1}, after setting $y=q/z$. This
leads to the identities {\allowdisplaybreaks
\begin{align*}
&\sum_{n=0}^{\infty}\frac{\left(\frac{q}{z};q\right)_{n}(z;q)_nq^{n^2}}{(q;q)_{2n}}=
\frac{\left(q z,\frac{q^2}{z};q\right)_{\infty}}{(q^2,q;q)_{\infty}}
\bigg[\sum_{r=0}^{\infty}\frac{1-q^{6r+1}}{1-q}\frac{\left(\frac{q}{z},z;q\right)_{3r}q^{3r^2+r}}
{\left(qz,\frac{q^2}{z};q\right)_{3r}}
-
\sum_{r=1}^{\infty}\frac{1-q^{6r-1}}{1-q}\frac{(q/z,z;q)_{3r-1}q^{3r^2-r}}{(qz,q^2/z;q)_{3r-1}}
\bigg ]\\
&= \frac{\left(q
z,\frac{q^2}{z};q\right)_{\infty}}{(q^2,q;q)_{\infty}}
\bigg[\sum_{r=0}^{\infty}\frac{1-q^{6r+1}}{1-q}\frac{\left(\frac{q}{z},z;q^3\right)_{r}q^{3r^2+r}}
{\left(q^3z,\frac{q^4}{z};q^3\right)_{r}}
+\sum_{r=1}^{\infty}\frac{1-q^{6r-1}}{1-1/q}\frac{(1/z,z/q;q^3)_{r}q^{3r^2-r}}{(q^2z,q^3/z;q^3)_{r}}
\bigg ]\\
&=\frac{\left(q
z,\frac{q^2}{z};q\right)_{\infty}}{(q^2,q;q)_{\infty}}\frac{(q^4,q^3,q^3,q^2;q^3)_{\infty}}
{(q^4/z,zq^3,q^3/z,q^2z;q^3)_{\infty}}.
\end{align*}
} The second equality above follows after some elementary
$q$-product manipulations and the last equality follows from the
fact that the two series in the previous equality combine to give a
special case of Bailey's $_6\psi_6$ summation  at \eqref{baileyeq1}
(replace $q$ with $q^3$, let $d, e \to \infty$, set $a=q$, $b=q/z$
and $c=z$). The result now follows after some further elementary
manipulations.
\end{proof}

\begin{theorem}\label{r2}
For $|q|<1$ and $z\in \mathbb{C}\setminus\{ 0\}$,
\begin{equation}\label{geneqr2}
\sum_{n=0}^{\infty}\frac{(q^2/z;q^2)_{n}(z;q^2)_nq^{n^2}}{(q;q^2)_n(q^4;q^4)_{n}}=
\frac{(-q;q^2)_{\infty}}{(q^2;q^2)_{\infty}}\,(qz,q^3/z,q^4;q^4)_{\infty}.
\end{equation}
\end{theorem}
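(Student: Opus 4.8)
The plan is to follow the template of the proof of Theorem \ref{r1}, but to work one modulus higher: I would use a mod $2$ Bailey pair rescaled to base $q^2$, so that the bilateral series appearing on the $\alpha$-side is summable by Bailey's $_6\psi_6$ formula \eqref{baileyeq1} taken in base $q^4$. It is this base-$q^4$ summation that manufactures the modulus-$4$ theta product $(qz,q^3/z,q^4;q^4)_\infty$ on the right of \eqref{geneqr2}.

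First I would take the mod $2$ pair \eqref{Sgen7} (a Bailey pair with respect to $a=q$) and replace $q$ by $q^2$ throughout, producing a Bailey pair in base $q^2$ with respect to $a=q^2$. In this rescaled pair I specialize $d=q^3$ and let $e\to 0$. Tracking the limiting forms of the Pochhammer symbols, the factors $(d;q^4)_r$ and $(q^6/d;q^4)_r$ in $\alpha_{2r}$ (respectively $(d/q^2;q^4)_r$ and $(q^4/d;q^4)_r$ in $\alpha_{2r-1}$) cancel, and one is left with
\[
\beta_n \longrightarrow \frac{q^{\,n^2-2n}}{(q^4;q^4)_n\,(q;q^2)_n},
\]
together with explicit expressions for $\alpha_{2r}$ and $\alpha_{2r-1}$ carrying the well-poised numerator $1-q^{8r\pm 2}=1-q^2(q^4)^{2r}$ and Gaussian exponents $q^{2r^2-3r}$ and $q^{2r^2-5r+2}$.

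Next I substitute this pair into the Bailey Transform \eqref{Seq1} with $q$ replaced by $q^2$, taking $a=q^2$ and $y=q^2/z$. Then $aq^2/yz=q^2$, so $(aq^2/yz)^n=q^{2n}$ and the left-hand side collapses to $\sum_n (q^2/z,z;q^2)_n\,q^{n^2}/[(q;q^2)_n(q^4;q^4)_n]$, which is exactly the series in \eqref{geneqr2}. The transform prefactor becomes $(q^2z,q^4/z;q^2)_\infty/(q^4,q^2;q^2)_\infty$, and the surviving $\alpha$-side sum runs over the two residue classes $n=2r$ and $n=2r-1$.

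The decisive step, and the one I expect to be the main obstacle, is to recombine this $\alpha$-side into a single summable bilateral series. I would rewrite the base-$q^2$ factor $(q^2/z,z;q^2)_n/(q^2z,q^4/z;q^2)_n$ at the indices $n=2r$ and $n=2r-1$ in terms of base-$q^4$ Pochhammer symbols, use \eqref{binneg} to reinterpret the $n=2r-1$ family as the negative-index terms, and thereby fuse the two sums into one bilateral $_6\psi_6$ in base $q^4$ with $A=q^2$, $B=q^2/z$, $C=z$ and $D,E\to\infty$. By \eqref{baileyeq1} this sums to a quotient of theta products; combining it with the prefactor above and reconciling the base-$q^2$ products against the base-$q^4$ products — the stage at which the factor $(-q;q^2)_\infty$ emerges — should yield the right-hand side of \eqref{geneqr2}. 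The delicate points are keeping the Gaussian exponents consistent through the reindexing and confirming that the two residue families assemble into precisely the well-poised $_6\psi_6$ rather than a neighboring, unsummable series.
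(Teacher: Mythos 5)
Your framework is exactly the paper's own proof transplanted to base $q^2$: the paper specializes \eqref{Sgen7} with $d=q^{3/2}$, $e\to 0$, feeds it into \eqref{Seq1} with $y=q/z$, sums the $\alpha$-side by a $_6\psi_6$, and only replaces $q$ by $q^2$ at the very end, so your pre-rescaling is an equivalent (and arguably cleaner) bookkeeping choice. Your limiting $\beta_n$, the exponents $q^{2r^2-3r}$ and $q^{2r^2-5r+2}$, and the collapse of the left-hand side of the transform to the series in \eqref{geneqr2} are all correct. The genuine error sits precisely at the step you flagged as decisive: the two residue families do \emph{not} assemble into a $_6\psi_6$ in base $q^4$ with $D,E\to\infty$; they assemble into one with $D=q^3$ kept \emph{finite} and only $E\to\infty$ (this is the paper's ``let $e\to\infty$, $d=q^{3/2}$'' before rescaling). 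The growth rates expose the mismatch: each parameter sent to infinity in a base-$q^4$ $_6\psi_6$ contributes $(-1)^rq^{2r^2-2r}$ to the $r$-th term, so with both $D,E\to\infty$ the terms grow like $q^{4r^2+2r}$ with no alternating sign, whereas the actual $\alpha$-side terms (for $n=2r$, after fusing the base-$q^2$ Pochhammers of length $2r$ into base-$q^4$ ones) are
\begin{equation*}
\frac{1-q^{8r+2}}{1-q^2}\,\frac{(q^2/z,z;q^4)_r}{(q^4z,q^6/z;q^4)_r}\,(-1)^r q^{2r^2+r},
\end{equation*}
the signature of exactly one infinite parameter together with the finite parameter $D=q^3$. (The finite $D=q^3$ is invisible in the terms because $(q^3;q^4)_r$ cancels against $(aq^4/D;q^4)_r=(q^3;q^4)_r$, which is presumably why it was easy to lose.)

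The error is also fatal on the product side, not merely cosmetic. With $a=q^2$, $b=q^2/z$, $c=z$ and $D,E\to\infty$, the right side of \eqref{baileyeq1} degenerates to $(q^6,q^4,q^4,q^2;q^4)_\infty/(q^4z,q^6/z,q^2z,q^4/z;q^4)_\infty$, whose $z$-dependent factors cancel exactly against your transform prefactor $(q^2z,q^4/z;q^2)_\infty/(q^4,q^2;q^2)_\infty$, leaving a constant independent of $z$ --- which cannot equal the $z$-dependent right side of \eqref{geneqr2}. The factors $(qz;q^4)_\infty$ and $(q^3/z;q^4)_\infty$ you need can only arise from the entries $aq/bd$ and $aq/cd$ of the $_6\psi_6$ product with $d$ finite: in base $q^4$ with $d=q^3$ these are $aq^4/bd=qz$ and $aq^4/cd=q^3/z$. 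The repair is simply to carry the specialization $d=q^3$ made in the Bailey pair through to the bilateral series: sum it with $D=q^3$, $E\to\infty$, and the product reconciliation (splitting the base-$q^2$ products into base-$q^4$ ones) then produces the factor $(-q;q^2)_\infty$ and the theta product exactly as in the paper.
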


\begin{proof}The proof is similar to that of the theorem above.
Let $e \to 0$ and set $d=q^{3/2}$ in the Bailey pair at \eqref{Sgen7}, and then insert
the resulting pair into \eqref{Seq1}, after setting $y=q/z$. This
leads to the identities
{\allowdisplaybreaks
\begin{align*}
&\sum_{n=0}^{\infty}\frac{\left(\frac{q}{z};q\right)_{n}(z;q)_nq^{n^2/2}}{(q^{1/2};q)_n(q^2;q^2)_{n}}\\&=
\frac{\left(q z,\frac{q^2}{z};q\right)_{\infty}}{(q^2,q;q)_{\infty}}
\bigg[\sum_{r=0}^{\infty}\frac{1-q^{4r+1}}{1-q}\frac{\left(\frac{q}{z},z;q\right)_{2r}
q^{r^2+r/2}} {\left(qz,\frac{q^2}{z};q\right)_{2r}(-1)^r}
-
\sum_{r=1}^{\infty}\frac{1-q^{4r-1}}{1-q}\frac{(q/z,z;q)_{2r-1}q^{r^2-r/2}(-1)^r}{(qz,q^2/z;q)_{2r-1}}
\bigg ]\\
&= \frac{\left(q
z,\frac{q^2}{z};q\right)_{\infty}}{(q^2,q;q)_{\infty}}
\bigg[\sum_{r=0}^{\infty}\frac{1-q^{4r+1}}{1-q}\frac{\left(\frac{q}{z},z;q^2\right)_{r}(-1)^rq^{r^2+r/2}}
{\left(q^2z,\frac{q^3}{z};q^2\right)_{r}}
+\sum_{r=1}^{\infty}\frac{1-q^{4r-1}}{1-1/q}\frac{(1/z,z/q;q^2)_{r}(-1)^rq^{r^2-r/2}}{(qz,q^2/z;q^2)_{r}}
\bigg ]\\
&=\frac{\left(q
z,\frac{q^2}{z};q\right)_{\infty}}{(q^2,q;q)_{\infty}}\frac{(q^3,q^2,z q^{1/2},q^{3/2}/z,q^2,q;q^2)_{\infty}}
{(q^3/z,zq^2,q^{3/2},q^2/z,qz,q^{1/2};q^2)_{\infty}}.
\end{align*}
}
The second equality above once again follows after some elementary $q$-product
manipulations and once again the
two series on the right side in the second equality combine to give a special case
of Bailey's $_6\psi_6$ summation  at \eqref{baileyeq1} (replace $q$ with
$q^2$, let $e \to \infty$, set $a=q$, $b=q/z$, $c=z$ and $d=q^{3/2}$). The
result now follows after replacing $q$ with $q^2$, followed by some further elementary manipulations.
\end{proof}

Different proofs of the results in the two theorems above were also
given in \cite{AB09} (Entries \textbf{5.3.1} and \textbf{5.3.5}) and
in \cite{P88}.

We now prove a new general series-product identity, one which may be regarded as a partner to Ramanujan's result in Theorem \ref{r1}.

\begin{theorem}\label{r1comp}
For $|q|<1$ and $z\in \mathbb{C}\setminus \{ 0 \}$,
\begin{equation}\label{geneq1}
\sum_{n=0}^{\infty}\frac{(q/z;q)_{n+1}(z;q)_nq^{n^2+n}}{(q;q)_{2n+1}}=
\frac{(q^2z,q/z,q^3;q^3)_{\infty}}{(q;q)_{\infty}}.
\end{equation}
\end{theorem}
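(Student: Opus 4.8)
The plan is to mirror the proof of Theorem~\ref{r1}, replacing the mod-$3$ pair \eqref{sgen35} (relative to $a=q$) by the mod-$3$ pair \eqref{sgen37} relative to $a=q^2$, since the latter's product should be the ``complementary'' theta quotient $\frac{(q^2z,q/z;q^3)_\infty}{(q,q^2;q^3)_\infty}$, which is the natural partner of $\frac{(qz,q^2/z;q^3)_\infty}{(q,q^2;q^3)_\infty}$ appearing in Theorem~\ref{r1}. First I would let $e\to0$ in \eqref{sgen37}. On the $\beta$ side this gives $\beta_n\to (1-q)\,q^{n^2}/(q;q)_{2n+1}$, using $(q^4/e;q^3)_n/(q^3/e;q)_n\to q^{n^2}$ and $(q^2;q)_{2n}=(q;q)_{2n+1}/(1-q)$; on the $\alpha$ side the two surviving families collapse to $\alpha_{3r}=\frac{1-q^{6r+2}}{1-q^2}q^{3r^2-r}$ and $\alpha_{3r+1}=-\frac{1-q^{6r+4}}{1-q^2}q^{3r^2+r}$, with $\alpha_{3r-1}=0$.

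Next I would substitute this limiting pair into the Bailey transform \eqref{Seq1} with $y=q^2/z$, so that $aq/yz=q$. The left-hand side of \eqref{Seq1} then becomes $(1-q)\sum_{n\ge0}(q^2/z,z;q)_n\,q^{n^2+n}/(q;q)_{2n+1}$, and writing $(q/z;q)_{n+1}=(1-q/z)(q^2/z;q)_n$ identifies this, up to the scalar $\tfrac{1-q/z}{1-q}$, with the series \eqref{geneq1} to be evaluated. Thus it remains to evaluate the right-hand side of \eqref{Seq1}, whose prefactor is $\frac{(qz,q^3/z;q)_\infty}{(q^3,q;q)_\infty}$.

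For the $\alpha$-series I would pass from base $q$ to base $q^3$ via $(a;q)_{3r}=(a,aq,aq^2;q^3)_r$; a short computation shows $\frac{(q^2/z,z;q)_{3r}}{(qz,q^3/z;q)_{3r}}=\frac{(q^2/z,z;q^3)_r}{(q^3z,q^5/z;q^3)_r}$ exactly, so the $\alpha_{3r}$ contribution is precisely the nonnegative-index part of the bilateral very-well-poised series $\sum_{m\in\mathbb{Z}}\frac{1-q^{6m+2}}{1-q^2}\frac{(q^2/z,z;q^3)_m}{(q^3z,q^5/z;q^3)_m}q^{3m^2+2m}$. Using \eqref{binneg} to reindex, the $\alpha_{3r+1}$ contribution should fill in exactly the negative-index part, since $m=-r-1$ sends $3m^2+2m\mapsto 3r^2+4r+1$ and $1-q^{6m+2}\mapsto -q^{-6r-4}(1-q^{6r+4})$, matching the stated $\alpha_{3r+1}$. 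This bilateral series is the $d,e\to\infty$ limit of Bailey's $_6\psi_6$ \eqref{baileyeq1} with $q\mapsto q^3$, $a=q^2$, $b=q^2/z$, $c=z$, and sums to $\frac{(q^5,q^3,q^3,q;q^3)_\infty}{(q^3z,q^5/z,qz,q^3/z;q^3)_\infty}$.

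Finally I would assemble the pieces: multiplying this product by the prefactor and cancelling the mod-$3$ split of $(qz,q^3/z;q)_\infty$ leaves $\frac{(q^2z,q/z;q^3)_\infty}{(1-q/z)(q^2,q^4;q^3)_\infty}$, and the scalar $\tfrac{1-q/z}{1-q}$ together with $(1-q)(q^4;q^3)_\infty=(q;q^3)_\infty$ then collapses everything to $\frac{(q^2z,q/z;q^3)_\infty}{(q,q^2;q^3)_\infty}=\frac{(q^2z,q/z,q^3;q^3)_\infty}{(q;q)_\infty}$, as required. The main obstacle is the bilateral reassembly in the third step: verifying that the $\alpha_{3r+1}$ terms are exactly the negative-index tail of the $_6\psi_6$ (the \eqref{binneg} reindexing, with its sign and $q$-power bookkeeping), and then tracking the stray factors $(1-q/z)$ and $(1-q)$ carefully enough that they cancel cleanly against the theta products rather than leaving a spurious residue.
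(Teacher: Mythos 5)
Your proposal is correct and follows essentially the same route as the paper: let $e\to 0$ in the pair \eqref{sgen37}, insert it into \eqref{Seq1} with $y=q^2/z$ (so that $aq/yz=q$), reassemble the $\alpha_{3r}$ and $\alpha_{3r+1}$ contributions into the bilateral series that is the $d,e\to\infty$ case of Bailey's $_6\psi_6$ \eqref{baileyeq1} with $q\mapsto q^3$, $a=q^2$, $b=q^2/z$, $c=z$, and then simplify the resulting theta quotient. All of your bookkeeping checks out, including the limiting forms of $\alpha_{3r}$, $\alpha_{3r+1}$, $\beta_n$, the negative-index reindexing via \eqref{binneg}, and the final cancellation of the factors $(1-q/z)$ and $(1-q)$.
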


\begin{proof}
Let $e \to 0$ in the Bailey pair at \eqref{sgen37}, and then insert
the resulting pair into \eqref{Seq1}, after setting $y=q^2/z$. This
leads to the identities {\allowdisplaybreaks
\begin{align*}
&\sum_{n=0}^{\infty}\frac{\left(\frac{q^2}{z};q\right)_{n}(z;q)_nq^{n^2+n}}{(q^2;q)_{2n}}\\
&= \frac{\left(q
z,\frac{q^3}{z};q\right)_{\infty}}{(q^3,q;q)_{\infty}}
\bigg[\sum_{r=0}^{\infty}\frac{1-q^{6r+2}}{1-q^2}\frac{\left(\frac{q^2}{z},z;q\right)_{3r}q^{3r^2+2r}}
{\left(qz,\frac{q^3}{z};q\right)_{3r}}
-\sum_{r=0}^{\infty}\frac{1-q^{6r+4}}{1-q^2}\frac{(q^2/z,z;q)_{3r+1}q^{3r^2+4r+1}}{(qz,q^3/z;q)_{3r+1}}
\bigg ]\\
&= \frac{\left(q
z,\frac{q^3}{z};q\right)_{\infty}}{(q^3,q;q)_{\infty}}
\bigg[\sum_{r=0}^{\infty}\frac{1-q^{6r+2}}{1-q^2}\frac{\left(\frac{q^2}{z},z;q^3\right)_{r}q^{3r^2+2r}}
{\left(q^3z,\frac{q^5}{z};q^3\right)_{r}}
+\sum_{r=1}^{\infty}\frac{1-q^{6r-2}}{1-1/q^2}\frac{(1/z,z/q^2;q^3)_{r}q^{3r^2-2r}}{(qz,q^3/z;q^3)_{r}}
\bigg ]\\
&=\frac{\left(q
z,\frac{q^3}{z};q\right)_{\infty}}{(q^3,q;q)_{\infty}}\frac{(q^5,q^3,q^3,q;q^3)_{\infty}}
{(q^5/z,zq^3,q^3/z,qz;q^3)_{\infty}}.
\end{align*}
} The second equality above follows after some elementary
$q$-product manipulations and re-indexing the second series
(replacing $r$ with $r-1$). The last equality follows from the fact
that the two series in the previous equality combine to give another
special case of Bailey's $_6\psi_6$ summation \eqref{baileyeq1}
(replace $q$ with $q^3$, let $d, e \to \infty$, set $a=q^2$,
$b=q^2/z$ and $c=z$). The result once again follows after some further
elementary manipulations.
\end{proof}

\begin{remark}
Several identities on Slater's list \cite{S52} follow
as special cases of~\eqref{geneq1}.
We summarize these in the following table.

\begin{center}
\begin{tabular}{|c| c | c |}
\hline
Replace $q$ by & set $z$ to & to obtain Slater's identity\\
\hline\hline
$q$ & $-q$ & (22) \\
\hline
$q^2$ & $-q$ & (27) = (87) \\
\hline
$q$  & $iq$ & (28) \\
\hline
$q^3$ & $q^2$ & (40)\\
\hline
$q^3$ & $q$ & (41)\\
\hline
$q^4$ & $q^3$ & (55) = (57) \\
\hline
$q$ & $e^{2\pi i /3} q$ & (92)\\
\hline
\end{tabular}
\end{center}
\end{remark}

We next give new proofs of Andrews'  $q$-analog of Gauss's $_2F_1(
1/ 2)$ sum and a special case ($b \to \infty$) of Heine's~\cite{H47}
$q$-analog of Gauss's sum:
\begin{equation}
\sum_{n=0}^{\infty}\frac{(a,b\,;q)_n}{(c,q\,;q)_n}\left(\frac{c}{ab}\right)^n
=\frac{(c/a,c/b \,;q)_{\infty}}{(c,c/ab \,;q)_{\infty}}. 
\end{equation}

We first recall Jackson's summation formula for a very-well-poised $_6\phi_5$
series~\cite[p. 356, Eq. (II. 20)]{GR04} (which follows upon setting $e=a$ in \eqref{baileyeq1}):
\begin{equation}\label{6phi5}
\sum_{n=0}^{\infty}\frac{(a,q\sqrt{a},-q\sqrt{a},b,c,d;q)_n}
{\left(q,\sqrt{a},-\sqrt{a},\frac{aq}{b},\frac{aq}{c},\frac{aq}{d};q\right)_n}
\left(\frac{aq}{bcd}\right)^n=\frac{(aq,aq/bc,aq/bd,aq/cd;q)_\infty}{(aq/b,aq/c,aq/d,aq/bcd;q)_\infty}.
\end{equation}

\begin{theorem}
\begin{equation}\label{aqg}
\sum_{n=0}^{\infty}\frac{(a,b;q)_nq^{n(n+1)/2}}{(q;q)_n(a b q;q^2)_n}
=\frac{(a q,b q;q^2)_{\infty}}{(q,a b q;q^2)_{\infty}}, \hspace{15pt} \text{$($Andrews, \cite{A73}$)$}
\end{equation}
\begin{equation}\label{hqg}
\sum_{n=0}^{\infty}\frac{(a;q)_nq^{n(n-1)/2}(-c)^n}{(c,q;q)_n a^n}
=\frac{(c/a;q)_{\infty}}{(c;q)_{\infty}}, \hspace{15pt} \text{$($Heine, \cite{H47}$)$}
\end{equation}
\end{theorem}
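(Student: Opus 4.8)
The plan is to prove both summations by the same device used above for the Ramanujan identities in Theorem~\ref{r1} and its companions: insert a Bailey pair from the families constructed in Section~2 into the Bailey transform~\eqref{Seq1}, and then evaluate the resulting $\alpha$-side series in closed form. The only change is the summation tool: whereas the Ramanujan proofs produced a bilateral series summed by Bailey's $_6\psi_6$~\eqref{baileyeq1}, here the $\alpha$-side will be a \emph{unilateral} very-well-poised series, which is exactly what Jackson's $_6\phi_5$ summation~\eqref{6phi5} evaluates. Recall that~\eqref{6phi5} is the $e=a$ specialization of~\eqref{baileyeq1}, so this is the natural unilateral analogue of the earlier arguments. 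It is worth noting in advance that the right-hand side $\frac{(aq,bq;q^2)_\infty}{(q,abq;q^2)_\infty}$ of~\eqref{aqg} is itself a $q$-Gauss product in base $q^2$, and that~\eqref{hqg} is precisely the $b\to\infty$ limit of Heine's $q$-Gauss sum; this is a useful check on the products that should emerge.

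For~\eqref{aqg}, the base-$q^2$ denominator $(abq;q^2)_n$ together with the Gaussian weight $q^{n(n+1)/2}$ points to a \emph{mod $2$} Bailey pair, so I would start from one of the pairs in Corollary~\ref{csgen2}, or directly from the parent pairs~\eqref{chubp66} of Theorem~\ref{t66} and those of Theorem~\ref{t7}, sending the free parameters $d,e,u,v$ to suitable limits. I would then choose $y$ and $z$ in~\eqref{Seq1} so that the $\beta$-side reproduces the left side of~\eqref{aqg}, with $(y,z;q)_n$ supplying the numerator $(a,b;q)_n$ and the factor $(aq/yz)^n$, together with whatever $q$-power the chosen $\beta_n$ carries, assembling the Gaussian $q^{n(n+1)/2}$. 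Because these pairs are supported on even indices and carry the very-well-poised factor $\tfrac{1-aq^{4r}}{1-a}$ (see~\eqref{sgen660}), the $\alpha$-side becomes, after writing $(y,z;q)_{2r}=(y,z;q^2)_r(yq,zq;q^2)_r$, a genuine very-well-poised $_6\phi_5$ in base $q^2$. Jackson's formula~\eqref{6phi5} with $q\mapsto q^2$ then sums it to an infinite product, and elementary $q$-product manipulation should match it to the stated right-hand side.

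For~\eqref{hqg}, I would run the same scheme from a simpler base-$q$ pair, where a single parameter limit generates the lone Gaussian factor $q^{n(n-1)/2}$ and the argument $(-c/a)^n$, and the $\alpha$-side again collapses under~\eqref{6phi5}; alternatively, one can read~\eqref{hqg} off directly as a confluent degeneration of~\eqref{6phi5} in which the well-poised base and one further parameter are driven to a limit so that the very-well-poised factor trivializes, leaving the unilateral series whose value is $(c/a;q)_\infty/(c;q)_\infty$.

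The main obstacle is the simultaneous bookkeeping of the specializations: one must choose the pair and the parameters $y,z$ (and the limits of $d,e,u,v$) so that \emph{two} incompatible-looking demands are met at once — the $\beta$-side must match the target left-hand side exactly, reproducing both the Gaussian exponent and the mixed bases (the base-$q$ numerator $(a,b;q)_n$ against the base-$q^2$ denominator $(abq;q^2)_n$), while the $\alpha$-side must reduce to a series of precisely very-well-poised $_6\phi_5$ shape with no surplus parameters, so that~\eqref{6phi5} applies verbatim. Making the very-well-poised factor either arise naturally (in base $q^2$) or trivialize under a confluent limit, and checking convergence at each limiting step, is where the real care lies; once the series is in Jackson's form the remaining product simplifications are routine.
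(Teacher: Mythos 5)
Your plan for \eqref{aqg} is, in all essentials, the paper's own proof. The paper lets $d\to 0$ in the pair \eqref{sgen660}, obtaining $\alpha_{2r}=\frac{1-aq^{4r}}{1-a}\frac{(a;q^2)_r(-1)^rq^{r^2-r}}{(q^2;q^2)_r}$, $\alpha_{2r-1}=0$, $\beta_n=\frac{q^{n(n-1)/2}}{(aq;q^2)_n(q;q)_n}$, inserts this into \eqref{Seq1} with $y=a/z$ — precisely your constraint that the product $yz$ equal the Bailey parameter, which is what makes two numerator/denominator pairs cancel after the base-change $(y,z;q)_{2r}=(y,z;q^2)_r(yq,zq;q^2)_r$, so the $\alpha$-side collapses to very-well-poised shape rather than an $_8\phi_7$ — and then sums it by \eqref{6phi5} with $q\mapsto q^2$ \emph{and} $d\to\infty$; note that it is this confluent form of Jackson, not \eqref{6phi5} verbatim, that applies, since the $d\to0$ limit in the pair leaves the quadratic power $q^{r^2+r}$ in the $\alpha$-side series. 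Where you genuinely diverge is \eqref{hqg}: the paper does not pass to a base-$q$ pair, but reuses the very same pair \eqref{sgen660new} in \eqref{Seq1} with $y=\sqrt{aq}$ and $z$ free, the key observation being $(aq;q^2)_n=(\sqrt{aq},-\sqrt{aq};q)_n$, so $(\sqrt{aq};q)_n$ cancels into the denominator of $\beta_n$ and leaves Heine's $(c;q)_n$ with $c=-\sqrt{aq}$, after which Jackson again finishes. Your first alternative (an unspecified ``simpler base-$q$ pair'') is the weak point of the proposal: the pair that does the job in one line is the unit-type pair $\alpha_n=\delta_{n,0}$, $\beta_n=1/(aq,q;q)_n$ (the $b\to 1$ case of \eqref{Sgen1}), but then \eqref{Seq1} applied to it \emph{is} the $q$-Gauss sum, \eqref{6phi5} plays no role, and the argument nearly restates \eqref{Seq1} itself. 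Your second alternative, by contrast, is sound and genuinely different from the paper: putting $b=a/f$ in \eqref{6phi5} and letting $a\to 0$ trivializes the very-well-poised factor and yields
\begin{equation*}
\sum_{n=0}^{\infty}\frac{(c,d;q)_n}{(q,fq;q)_n}\left(\frac{fq}{cd}\right)^n=\frac{(fq/c,fq/d;q)_{\infty}}{(fq,fq/cd;q)_{\infty}},
\end{equation*}
whence $d\to\infty$ gives \eqref{hqg} upon renaming $fq=c$; this is more elementary and self-contained, but it abandons the Bailey-pair mechanism that is the point of the paper's treatment.
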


\begin{proof}
Let $d\to 0$ in the Bailey pair at \eqref{sgen660} to get the pair
{\allowdisplaybreaks
\begin{align}\label{sgen660new}
\alpha_{2r}&=
\frac{\displaystyle{1-a q^{4r}}}{\displaystyle{1-a}}
\frac{\left(\displaystyle{a;q^2}\right)_r (-1)^r\displaystyle{q^{r^2-r}}}
   {\left(\displaystyle{q^2;q^2}\right)_r},\\
\alpha_{2r-1}&=0,\notag
\\
\beta_n&= \frac{q^{n(n-1)/2}}{(a q;q^2)_{n}(q;q)_n},\,\, \text{ with respect to $a=a$.}\notag
\end{align}
} Substitute this pair into  \eqref{Seq1}, after setting $y=a/z$, to
get
\begin{align*}
\sum_{n=0}^{\infty}\frac{(a/z,z;q)_nq^{n(n+1)/2}}{(a q;q^2)_{n}(q;q)_n}&
=\frac{(zq,aq/z;q)_{\infty}}{(aq,q;q)_{\infty}}
\sum_{r=0}^{\infty}\frac{1-aq^{4r}}{1-a}
\frac{(a,a/z,z;q^2)_r}{(zq^2,aq^2/z,q^2;q^2)_r}(-1)^r q^{r^2+r}\notag \\
&
=\frac{(zq,aq/z;q)_{\infty}}{(aq,q;q)_{\infty}}\frac{(aq^2,q^2;q^2)_{\infty}}{(q^2z,aq^2/z;q^2)_{\infty}}\notag \\
&=\frac{(zq,aq/z;q^2)_{\infty}}{(aq,q;q^2)_{\infty}}.
\end{align*}
The next-to-last equality follows from \eqref{6phi5} (replace $q$ with $q^2$, let $d \to \infty$, set $b=a/z$ and $c=z$), and \eqref{aqg} follows upon replacing $a$ with $a b$ and $z$ with $b$.

Next, substitute the pair at \eqref{sgen660new} once again into \eqref{Seq1}, this time setting $y=\sqrt{aq}$.
This gives
{\allowdisplaybreaks
\begin{align*}
\sum_{n=0}^{\infty}\frac{(z;q)_nq^{n(n-1)/2}}{(-\sqrt{a q};q)_{n}(q;q)_n}\left(\frac{\sqrt{aq}}{z}\right)^n&
=\frac{(\sqrt{aq},aq/z;q)_{\infty}}{(aq,\sqrt{aq}/z;q)_{\infty}}
 \sum_{r=0}^{\infty}\frac{1-aq^{4r}}{1-a}
\frac{(a,z,zq;q^2)_r q^{r^2+r}}{(aq^2/z,aq^3/z,q^2;q^2)_r} \left(\frac{-a}{z^2q}\right)^r\notag \\
&
=\frac{(\sqrt{aq},aq/z;q)_{\infty}}{(aq,\sqrt{aq}/z;q)_{\infty}}
\frac{(aq^2,aq/z^2;q^2)_{\infty}}{(aq^2/z,aq/z;q^2)_{\infty}}\notag \\
&=\frac{(-\sqrt{aq}/z;q)_{\infty}}{(-\sqrt{aq};q)_{\infty}}.
\end{align*}
}
The next-to-last equality follows once again from \eqref{6phi5} (replace $q$ with $q^2$, let $d \to \infty$, set $b=z$ and $c=z q$), and \eqref{hqg} follows upon replacing $-\sqrt{aq}$ with $c$ and $z$ with $a$.
\end{proof}

\subsection{A particular case of the Bailey Transform, I}
If we set $y=-\sqrt{aq}$ and $z=\sqrt{aq}$ in \eqref{Seq1}, the following identity results, providing both series converge:
\begin{equation}\label{Seq1aq}
\sum_{n=0}^{\infty}(aq;q^2)_n \left (-1\right)^n \beta_n
=\frac{1}{(a q^2;q^2)_{\infty}(-1;q)_{\infty}}
\sum_{n=0}^{\infty}\left (-1\right)^n \alpha_n.
\end{equation}

This particular case was not used by Slater \cite{S52}, and may possibly be regarded as being of lesser importance for two reasons. Firstly, it is certainly the case the both series above will not converge for all Bailey pairs $(\alpha_n,\beta_n)$, and secondly, even if a series-product identity does result, the power of $q$ on the series side may not be quadratic in the exponent (and many do not consider such identities as being of Rogers-Ramanujan-Slater type), or else the resulting identity is an easy consequence of a more general identity. However we believe such identities are sufficiently interesting to include some examples.
\begin{corollary}\label{cor3.6}
\begin{equation}\label{eqaq1}
1+\sum_{n=1}^{\infty}\frac{(-q^2;q^2)_{n-1}q^n}{(q^2;q^2)_n}
=\frac{(-q;q^2)_{\infty}}{(q^2;q^2)_{\infty}}(-q^6,-q^{10},q^{16};q^{16})_{\infty},
\end{equation}
\begin{equation}\label{eqaq2}
\sum_{n=1}^{\infty}\frac{(q;q^2)_{n}q^n}{(-q;q^2)_{n+1}}
=1+\sum_{r=1}^{\infty}q^{8r^2}(q^{4r}-q^{-4r}),
\end{equation}
\begin{equation}\label{eqaq3}
\sum_{n=0}^{\infty}\frac{(-q^2;q^2)_{n}q^n}{(q^2;q^2)_{n+1}}
=\frac{(-q;q^2)_{\infty}}{(q^2;q^2)_{\infty}}(-q^2,-q^{14},q^{16};q^{16})_{\infty},
\end{equation}
\begin{equation}\label{eqaq4}
\sum_{n=0}^{\infty}\frac{(q^3;q^3)_{n}(-q)^n}{(q^2;q^2)_{n+1}(q;q)_n}
=\frac{(q;q^2)_{\infty}}{(q^2;q^2)_{\infty}}\frac{(q^{18};q^{18})_{\infty}}{(q^{9};q^{18})_{\infty}}.
\end{equation}
\end{corollary}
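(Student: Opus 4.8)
The plan is to derive all four identities in Corollary~\ref{cor3.6} as instances of the specialized Bailey transform \eqref{Seq1aq}, by feeding in explicit Bailey pairs already produced in Section~2. Recall that \eqref{Seq1aq} is just \eqref{Seq1} with $y=-\sqrt{aq}$ and $z=\sqrt{aq}$, so that $(y,z;q)_n=(aq;q^2)_n$ and $(aq/yz)^n=(-1)^n$. The moduli of the right-hand products dictate the source: the modulus-$16$ theta products in \eqref{eqaq1} and \eqref{eqaq3}, and the modulus-$16$-type false theta series in \eqref{eqaq2}, all point to the mod $4$ pairs of Theorems~\ref{t41}--\ref{t43} (where $\alpha_n$ carries exponents $q^{8r^2+\cdots}$), while the modulus-$18$ product in \eqref{eqaq4} points to a mod $3$ pair from Corollary~\ref{csgen1}. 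For each identity I would take the relevant pair at the appropriate value of $a$ (one of $1$, $q$, $q^2$) and specialize the remaining free parameters $u,v$ (and, for the mod $2$/mod $3$ pairs, $d,e$) so that $\alpha_n$ collapses to a single monomial, or to a sum of two monomials, in $q$, and so that both series in \eqref{Seq1aq} converge.

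With the pair chosen, the left-hand side is routine: the factor $(aq;q^2)_n$ coming from $(y,z;q)_n$ cancels against part of $(q;q)_{2n}$ (via the splitting $(q;q)_{2n}=(aq;q^2)_n\cdot(\dots)$ at the relevant $a$), reducing $\sum_n(aq;q^2)_n(-1)^n\beta_n$ to exactly the basic hypergeometric series displayed on the left of each of \eqref{eqaq1}--\eqref{eqaq4}.

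The substance is on the right. There one must evaluate $\sum_{n\ge0}(-1)^n\alpha_n$ and then fold in the prefactor $1/\big((aq^2;q^2)_\infty(-1;q)_\infty\big)$, using $(-1;q)_\infty=2(-q;q)_\infty$. For \eqref{eqaq1}, \eqref{eqaq3} and \eqref{eqaq4} I would split the sum according to the residue of $n$ (mod $4$, respectively mod $3$), re-index the ``negative'' families by $r\mapsto -r$ with the aid of \eqref{binneg}, and observe that the pieces assemble into a single bilateral theta sum $\sum_{r\in\mathbb{Z}}q^{8r^2+\cdots}$ (respectively a modulus-$18$ analogue). The Jacobi triple product identity then produces the modulus-$16$ (respectively modulus-$18$) products, which simplify against the prefactor to the stated closed forms; one must take care with the overcounting of the $r=0$ term, since in several of these pairs the displayed formula for $\alpha_0$ differs from $\alpha_{4r}\big|_{r=0}$.

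The exceptional and most delicate case is \eqref{eqaq2}. The same bookkeeping must be carried out, but the two index-halves of $\sum_n(-1)^n\alpha_n$ then carry opposite signs and fail to combine into a bilateral theta sum; instead they survive as the difference $1+\sum_{r\ge1}q^{8r^2}(q^{4r}-q^{-4r})$, a genuine false theta series admitting no product evaluation. I expect the main obstacles to be twofold. First, pinning down the parameter specializations (which may be complex, e.g.\ roots of unity or values of the form $\pm i\sqrt{a}$) that both reduce $\alpha_n$ to monomials and keep the $\beta$-series convergent --- the convergence caveat flagged just before the corollary is genuine and must be verified case by case. Second, correctly diagnosing which pairs yield an honest Jacobi-triple-product collapse and which, as in \eqref{eqaq2}, yield only a false theta series; recognizing the latter phenomenon, rather than forcing a product, is the conceptually interesting point of these examples.
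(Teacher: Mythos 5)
Your proposal matches the paper's proof in all essentials: the paper inserts the Bailey pairs \eqref{chubp42a}, \eqref{chubp44a}, \eqref{chubp46a} (the $u,v\to\infty$ limits of the mod $4$ pairs of Theorems \ref{t41}--\ref{t43}, at $a=1,q,q^2$ respectively) and \eqref{sgen22} with $a=q^2$ into \eqref{Seq1aq} --- exactly the pairs your modulus bookkeeping points to --- and finishes with the manipulations you describe (reduction of the $\beta$-series on the left, splitting $\sum(-1)^n\alpha_n$ by residue class, Jacobi triple product for \eqref{eqaq1}, \eqref{eqaq3}, \eqref{eqaq4}, and the surviving false theta series for \eqref{eqaq2}). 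The one simplification you did not anticipate is that the needed specializations are just the $u,v\to\infty$ limits already tabulated in Section 2, so the delicate complex parameter choices you flag as an obstacle never arise.
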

\begin{proof}
For the first three identities above, insert the Bailey pairs at \eqref{chubp42a}, \eqref{chubp44a} and
\eqref{chubp46a}, respectively into \eqref{Seq1aq} (noting that $a=1, q, q^2$, respectively), and in each case the result follows after some elementary manipulation of the resulting identity.

For \eqref{eqaq4}, substitute the Bailey pair at \eqref{sgen22} into \eqref{Seq1aq}, set $a=q^2$, and simplify the resulting identity.
\end{proof}

\subsection{A particular case of the Bailey Transform, II}
If we set $y=q\sqrt{a}$ and let $z \to \infty$ in \eqref{Seq1}, the
following identity results:
\begin{equation}\label{Seq1aqf}
\sum_{n=0}^{\infty}(q\sqrt{a};q)_n \left (-\sqrt{a}\right)^n
q^{n(n-1)/2}\beta_n 
=\frac{(q\sqrt{a};q)_{\infty}}{(a
q;q)_{\infty}} \sum_{n=0}^{\infty}(1-\sqrt{a} q^n)\left
(-\sqrt{a}\right)^nq^{n(n-1)/2} \alpha_n.
\end{equation}

This transformation also gives rise to a number of interesting false theta series
identities, and we give several examples below.  See \S\ref{hybrid} below for
an explanation of false theta series.
\begin{corollary}
\begin{equation}\label{Seq1aqfeq1}
\sum_{n=0}^{\infty}\frac{(1-q^{n+1})(q^3;q^3)_n(-1)^n
q^{n(n+1)/2}}{(q;q)_{2n+2}}
=\sum_{r=0}^\infty q^{9r^2 + 3r} (1-q^{12r+6}) +\sum_{r=0}^\infty
q^{9r^2 + 12r + 4} (1-q^{-6r-3} )
\end{equation}
\begin{equation}\label{Seq1aqfeq2}
\sum_{n=0}^{\infty}\frac{(-1)^n q^{n^2+n}}{(-q;q)_{2n}} 
=\sum_{r=0}^\infty q^{10r^2+r} (1-q^{18r+9}) + \sum_{r=0}^\infty
q^{10r^2+11r+3} (1-q^{-2r-1}).
\end{equation}
\begin{multline}\label{Seq1aqfeq4}
\sum_{n=0}^{\infty}\frac{(q;q)_{n+1}(-q^2;q^2)_{n}(-1)^n
q^{(n^2+n)/2}}{(q;q)_{2n+2}}
= \sum_{r=0}^\infty q^{16r^2+4r} (1-q^{24r+12}) \\
-\sum_{r=0}^\infty q^{16r^2+8r+1} (1-q^{16r+8})
+\sum_{r=0}^\infty q^{16r^2+12r+2} (1-q^{8r+4}).
\end{multline}
\begin{equation}\label{Seq1aqfeq5}
\sum_{n=0}^{\infty}\frac{(-q;q)_{n}
q^{n(n+1)/2}(-1)^n}{(q;q^2)_{n+1}}
=\sum_{r=0}^\infty (-1)^r q^{r^2+r}.
\end{equation}
\end{corollary}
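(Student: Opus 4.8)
The plan is to prove all four identities by one uniform mechanism: substitute a suitable Bailey pair built in Section~2 into the specialized transform \eqref{Seq1aqf}, choosing the underlying parameter $a$ so that the $\beta_n$ of the chosen pair reproduces exactly the basic hypergeometric series displayed on the left of each identity. First I would read off $a$ and the modulus of the pair from the shape of the $\beta_n$ side: the $(q^3;q^3)_n$ in \eqref{Seq1aqfeq1} signals a mod-$3$ pair, the $(-q^2;q^2)_n$ together with $(q;q)_{2n+2}$ in \eqref{Seq1aqfeq4} signals a mod-$4$ pair (most likely \eqref{chubp41a} or \eqref{chubp42a} with $a=1$), and the $(q;q^2)_{n+1}$ in \eqref{Seq1aqfeq5} and the $(-q;q)_{2n}$ in \eqref{Seq1aqfeq2} point to mod-$2$ pairs. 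In each case the left-hand side of \eqref{Seq1aqf}, namely $\sum_n (q\sqrt a;q)_n(-\sqrt a)^n q^{n(n-1)/2}\beta_n$, should collapse to the series side of the target identity after elementary cancellation of $q$-shifted factorials; any residual free parameter in the pair (an $e$, or a $d$) is then pinned down by a limit such as $e\to0$ or $d,e\to\infty$, exactly as in the proofs of Theorems~\ref{r1}--\ref{r1comp}.

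The false theta series on the right will emerge from the $\alpha$-side of \eqref{Seq1aqf}. The $\alpha_n$ I intend to use are supported on only a bounded set of residues modulo $m$ (with $m=2,3,4$) and grow like $q^{c r^2}$, so that writing $n=mr+j$ and letting $j$ range over the surviving residues turns the single sum $\sum_n (1-\sqrt a q^n)(-\sqrt a)^n q^{n(n-1)/2}\alpha_n$ into a finite collection of partial-theta series in $r$. The transform's own factor $q^{n(n-1)/2}$ combines with the quadratic exponent of $\alpha_{mr+j}$ to give the leading powers $q^{9r^2+3r}$, $q^{10r^2+r}$, $q^{16r^2+4r}$ and $q^{r^2+r}$, while the factor $(1-\sqrt a q^n)$ together with the alternating signs $(-\sqrt a)^n$ supplies the binomial factors $(1-q^{12r+6})$, $(1-q^{-6r-3})$, and so on. The count of subsums matches the count of nonzero residue classes: three for \eqref{Seq1aqfeq4} (from $\alpha_{4r},\alpha_{4r\pm1}$ with $\alpha_{4r+2}=0$), and correspondingly fewer for the others. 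Crucially, because the $\alpha$-sum is merely reorganized rather than summed in closed form, and because it is genuinely one-sided, no infinite product appears; this is in sharp contrast to Section~3.1, where the \emph{two}-sided $\alpha$-sums were evaluated by Bailey's ${}_6\psi_6$ formula \eqref{baileyeq1}.

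A point requiring care is the prefactor $(q\sqrt a;q)_\infty/(aq;q)_\infty$, since every right-hand side here is a pure false theta series with no product. When $a=1$ this prefactor is identically $1$, which already accounts for the mod-$3$ and mod-$4$ cases (where the $a=1$ pairs \eqref{Sgen2}, \eqref{Sgen3}, \eqref{chubp41a}, \eqref{chubp42a} are available); for the remaining cases I expect either a further rescaling $q\to q^k$ of the finished identity (as was done for \eqref{geneqr2} and \eqref{geneq1}) or a cancellation of the finite prefactor against the very-well-poised normalization $(1-aq^{\,\cdot})/(1-a)$ carried by $\alpha$ (for instance the $1/(1-q^2)$ in \eqref{sgen37}). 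I also note that, to keep all exponents integral on both sides, $a$ must be $1$ or an even power of $q$, since an odd power would introduce half-integer powers through $\sqrt a$; this constraint helps select the correct pair among the candidates.

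The hard part will be the bookkeeping on the $\alpha$-side: correctly re-indexing each residue class (shifting $r\mapsto r\pm1$ where a class is indexed off-center), tracking the two sign factors $(-1)^n$ and $(-\sqrt a)^n$, and recombining adjacent contributions so that they assemble into precisely the binomial factors $(1-q^{\pm(\text{linear})})$ as displayed; the appearance of $(1-q^{-6r-3})$ and $(1-q^{-2r-1})$ with negative exponents shows that some contributions must be grouped across classes rather than within a single class. A secondary but essential issue, explicitly flagged in the remarks preceding the corollary, is convergence: for each chosen pair one must verify that both sides of \eqref{Seq1aqf} converge for $|q|<1$. This is immediate on the $\beta$-side, but on the $\alpha$-side it relies on the quadratic growth $q^{cr^2}$ dominating the linear and constant factors, so I would confirm in each case that $c>0$ before asserting the transform applies.
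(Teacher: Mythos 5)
Your overall mechanism is exactly the paper's: feed Bailey pairs from Section 2 into the specialization \eqref{Seq1aqf}, let the $\beta$-side reproduce the left-hand series and reorganize the one-sided $\alpha$-sum by residue classes into false theta sums. Your identifications for \eqref{Seq1aqfeq4} and \eqref{Seq1aqfeq2} are also on target (the paper uses \eqref{chubp41a} with $a=1$ for the former, and \eqref{Sgen9} with $e\to\infty$, $d=q^{3/2}$ followed by $q\mapsto q^2$ for the latter). However, for the other two identities your proposed pairs would not work, and since selecting the correct pair is essentially the whole content of the proof, this is a genuine gap. For \eqref{Seq1aqfeq1} you propose the $a=1$ mod-$3$ pairs \eqref{Sgen2} or \eqref{Sgen3}; but to produce the factor $(q^3;q^3)_n$ from their $\beta_n$ you would need $q^2/e=q^3$, i.e.\ $e=1/q$, which is precisely the excluded value, and even ignoring that, an $a=1$ pair feeds the transform the factor $(q;q)_n/(q;q)_{2n}$ and can never generate the shifted denominator $(q;q)_{2n+2}$ together with the numerator $(1-q^{n+1})$. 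That structure is the signature of a pair relative to $a=q^2$: the paper uses \eqref{sgen21} with $a=q^2$, where $(q\sqrt{a};q)_n=(q^2;q)_n$ supplies $(1-q^{n+1})$, $(aq;q)_{2n}=(q;q)_{2n+2}/(q;q)_2$ supplies the denominator, and the nontrivial prefactor $(q^2;q)_\infty/(q^3;q)_\infty=1-q^2$ cancels against the leftover $(q;q)_2$. This directly contradicts your claim that the $a=1$ pairs with trivial prefactor ``already account for the mod-$3$ case.''

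For \eqref{Seq1aqfeq5} your attribution to the mod-$2$ pairs of Section 2.3 also fails: those pairs have $\beta_n$ with denominators built from $(q^2;q^2)_n$, $(q^2/d;q)_n$, $(q^2/e;q)_n$ (or $(aq;q^2)_n$, $(aq/d;q)_n$), and after the transform contributes $(q;q)_n$ no choice of $d,e$ produces the numerator $(-q;q)_n$ over $(q;q^2)_{n+1}$. The paper instead specializes the \emph{general} pair \eqref{Sgen1} (which is not a mod-$2$ pair in the sense of Section 2.3) with $a\to1$, $b=-q^{1/2}$, $c=q^{1/2}$, i.e.\ Slater's pair \textbf{H(1)}, for which $(b;q)_n(c;q)_n=(q;q^2)_n$ keeps all exponents integral at base $q$ without any rescaling. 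So two of your four pair selections must be replaced before the ``bookkeeping'' stage you describe can even begin; once the correct pairs are in hand, the rest of your plan does coincide with the paper's computation.
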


\begin{proof}
Let $a=q^2$ in the Bailey pair at \eqref{sgen21}, substitute the
resulting pair into \eqref{Seq1aqf} (with $a=q^2$) and
\eqref{Seq1aqfeq1} follows after simplifying the left side and
re-indexing two of the resulting series on the right side.

For \eqref{Seq1aqfeq2}, let $e\to \infty$ and set $d=q^{3/2}$ in the
Bailey pair at \eqref{Sgen9}, substitute the resulting pair
(Slater's pair \textbf{I(4)}) into \eqref{Seq1aqf} (with $a=1$), replace $q$ with $q^2$ and
simplify.

For \eqref{Seq1aqfeq4},  substitute  pair \eqref{chubp41a} into \eqref{Seq1aqf} (with $a=1$),
 and rearrange.

To get \eqref{Seq1aqfeq5}, let $a \to 1$ and set $c=q^{1/2}$ and
$b=-q^{1/2}$ in Slater's general Bailey pair at  \eqref{Sgen1},
substitute the resulting pair (Slater's pair \textbf{H(1)}, corrected)
into \eqref{Seq1aqf} (again with $a=1$) and rearrange the resulting
identity.


\end{proof}

\begin{remark}
The transformation at \eqref{Seq1aqf} does lead to identities of Rogers-Ramanujan type,
but those we found were either not new, or were simple linear combinations of existing identities.
\end{remark}

\subsection{Miscellaneous Identities of the Rogers-Ramanujan-Slater type} Finally, we exhibit some identities that follow from Bailey pairs not listed by Slater, pairs that do follow however from specializing the parameters in our general Bailey pairs.
We  use two  cases of the transformation at \eqref{Seq1} which were used by Slater. Firstly, let $y,z \to \infty$ to get
\begin{equation}\label{Seq1a}
\sum_{n=0}^{\infty}a^n q^{n^2} \beta_n(a,q)
=\frac{1}{(aq;q)_{\infty}}
\sum_{n=0}^{\infty}a^n q^{n^2} \alpha_n(a,q).
\end{equation}
Secondly, let $z \to \infty$, set $y=-\sqrt{aq}$ and then replace $a$ with $a^2$ and $q$ with $q^2$ to get
\begin{equation}\label{Seq1b}
\sum_{n=0}^{\infty}(-aq;q^2)_n a^n q^{n^2} \beta_n(a^2,q^2)
=\frac{(-aq;q^2)_{\infty}}{(a^2q^2;q^2)_{\infty}}
\sum_{n=0}^{\infty}a^n q^{n^2} \alpha_n(a^2,q^2).
\end{equation}

\begin{corollary}
\begin{equation}\label{misc1eq1}
1+\sum_{n=1}^{\infty}\frac{q^{n^2}(-1)^n}{(1+q^{2n-1})(-1;q)_{n-1}(q;q)_n}
=\frac{1+(q^{4};q^{4})_{\infty}}{(-1;q)_{\infty}}.
\end{equation}
\begin{equation}\label{misc1eq2}
1+\sum_{n=1}^{\infty}\frac{(q;-q^2)_n q^{n^2}(-1)^{\lfloor (n+1)/2 \rfloor}}{(1-q^{4n-2})(-1;-q^2)_{n-1}(-q^2;-q^2)_n}
=\frac{(q,-q^{3};q^{4})_{\infty}}{(-1,q^2;q^4)_{\infty}}\left[1+(-q^4,-q^{12},q^{16};q^{16}) \right].
\end{equation}
\begin{equation}\label{misc1eq5}
\sum_{n=0}^{\infty}\frac{q^{(n^2+n)/2}(-1)^n}{(1-q^{2n+1})(q;q)_n}
=(-q^3,-q^5,q^8;q^8)_{\infty}.
\end{equation}
\begin{equation}\label{misc1eq9}
\sum_{n=0}^{\infty}\frac{q^{(n^2+3n)/2}(-1)^n}{(1-q^{2n+1})(q;q)_n}
=(-q,-q^7,q^8;q^8)_{\infty}.
\end{equation}
\begin{multline}\label{mod4ideq1}
\frac{1+q^3}{(1-q)(1-q^2)}+\frac{1-q^4}{1-q}
\sum_{n=1}^{\infty}\frac{q^{n^2+3n}(-q^2;q^2)_{n-1}}{(q;q)_{2n+2}}
=\frac{1}{(q;q)_{\infty}}\\
\times \bigg [(-q^{18},-q^{30},q^{48};q^{48})_{\infty}+q^3
(-q^{10},-q^{38},q^{48};q^{48})_{\infty}\\-q^6
(-q^{2},-q^{46},q^{48};q^{48})_{\infty}-q^3
(-q^{6},-q^{42},q^{48};q^{48})_{\infty}\bigg ]
\end{multline}
\end{corollary}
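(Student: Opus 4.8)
The plan is to prove all five identities by the single mechanism already used in the proofs of Theorems \ref{r1}, \ref{r2} and \ref{r1comp}: take one of the Bailey pairs catalogued in Section~2, specialise its free parameters, substitute it into one of the two limiting forms \eqref{Seq1a} and \eqref{Seq1b} of the Bailey transform \eqref{Seq1}, and then collapse the resulting $\alpha$-series into a theta product. The $\beta$-side of the chosen pair must reproduce, term for term, the series on the left of the target identity, and this requirement dictates both which pair to use and the values assigned to its parameters and to $a$. Once the pair is fixed, the left-hand side of the transform is, up to trivial $q$-product factors, exactly the stated left-hand side, so the entire content of each identity is concentrated in evaluating the $\alpha$-series on the right.

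For \eqref{mod4ideq1} I would start from the mod~$4$ pair \eqref{chubp45a} (relative to $a=q^2$), whose $\beta_n=(-q^2;q^2)_n/(q^3;q)_{2n}$ matches the summand once one writes $(q;q)_{2n+2}=(1-q)(1-q^2)(q^3;q)_{2n}$, and substitute it into \eqref{Seq1a} with $a=q^2$. The prefactor $1/(q^3;q)_\infty$ becomes the $1/(q;q)_\infty$ on the right, while the leading term $\frac{1+q^3}{(1-q)(1-q^2)}$ and the factor $\frac{1-q^4}{1-q}$ in front of the sum are exactly the $n=0$ contribution together with the absorbed $q$-product produced by the transform. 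Because the $\alpha_n$ of \eqref{chubp45a} are given by distinct closed forms on the residue classes $4r$, $4r+1$ (itself a sum of two terms) and $4r-2$, the series $\sum_n q^{2n}q^{n^2}\alpha_n$ splits into four theta-type sums; applying the Jacobi triple product identity to each yields precisely the four products $(-q^{18},-q^{30},q^{48};q^{48})_\infty$, $q^3(-q^{10},-q^{38},q^{48};q^{48})_\infty$, $-q^6(-q^2,-q^{46},q^{48};q^{48})_\infty$ and $-q^3(-q^6,-q^{42},q^{48};q^{48})_\infty$ on the right of \eqref{mod4ideq1}.

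The identities \eqref{misc1eq5} and \eqref{misc1eq9} form a matched pair whose summands differ only by a factor $q^n$, so I would obtain them from two companion Bailey pairs whose $\beta_n$ likewise differ by $q^n$ (in the manner of \eqref{Sgen4}/\eqref{Sgen5} or \eqref{chubp45a}/\eqref{chubp46a}), substituted into the same transform; since their right-hand sides $(-q^3,-q^5,q^8;q^8)_\infty$ and $(-q,-q^7,q^8;q^8)_\infty$ are bare theta products with no $1/(\,\cdot\,)_\infty$ prefactor, the $\alpha$-series must here sum to $(aq;q)_\infty$ times the stated product, which I expect to effect by recognising it as a specialisation of the $_6\psi_6$ summation \eqref{baileyeq1}, as in Theorems \ref{r1}--\ref{r1comp}, rather than by a single triple product. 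For \eqref{misc1eq1} and \eqref{misc1eq2}, whose right sides have the ``constant plus one theta product'' shape, I would specialise the free parameter $a$ in one of the mod~$2$ pairs \eqref{sgen660}--\eqref{sgen662}: the choice $a=-1$ supplies the alternating sign $(-1)^n$ and, through the $1/(aq;q)_\infty=1/(-q;q)_\infty$ prefactor of \eqref{Seq1a}, a $(-q;q)_\infty$ in the denominator, while the overall factor $1/(1-a)=\half$ carried by $\alpha_{2r}$ reconciles this with the $(-1;q)_\infty$ demanded on the right. Since $\alpha_{2r-1}=0$ in these pairs only one residue class survives, so a single theta series (evaluated by the Jacobi triple product identity) appears, the additive $1$ being the isolated $r=0$ term; the extra doubling of the modulus needed for the mod~$16$ product in \eqref{misc1eq2} comes from using \eqref{Seq1b}, with its built-in replacement $q\mapsto q^2$, in place of \eqref{Seq1a}.

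The main obstacle throughout is the evaluation of the $\alpha$-series, not the matching of the $\beta$-side. Because each mod~$m$ pair specifies $\alpha_n$ piecewise, the $\alpha$-series always breaks into several bilateral or unilateral sums, and the real work lies in (i) extracting the correct power of $q$ and the correct sign from each piece so that what remains is a bare triple product, and (ii) in the cases where a single triple product does not close the sum, recognising the combination as a specialisation of \eqref{baileyeq1}. A secondary but delicate point is the bookkeeping of the non-series terms: the leading $1$ in \eqref{misc1eq1} and \eqref{misc1eq2}, and the rational factors $\frac{1+q^3}{(1-q)(1-q^2)}$ and $\frac{1-q^4}{1-q}$ in \eqref{mod4ideq1}, all arise from low-index contributions and from pushing the transform's prefactor through the sum, and must be tracked carefully to land on the stated normalisation. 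I expect each verification to become routine once the pair, its parameter specialisation, and the transform are correctly identified, with a computer algebra check advisable for the $\alpha$-side triple-product evaluations.
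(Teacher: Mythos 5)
Your overall template---specialise a Bailey pair from Section~2, insert it into \eqref{Seq1a} or \eqref{Seq1b}, and evaluate the $\alpha$-series as a theta product---is indeed the paper's mechanism, and your treatment of \eqref{misc1eq1} (pair \eqref{sgen660} with $a=-1$, together with $d=q^2$, into \eqref{Seq1a}) agrees with the paper's proof. But for \eqref{mod4ideq1} your plan has a genuine gap: the limiting pair \eqref{chubp41a}-style pair you chose, namely \eqref{chubp45a}, does not match the left-hand side. With $a=q^2$, \eqref{Seq1a} applied to \eqref{chubp45a} produces $\sum_n q^{n^2+2n}(-q^2;q^2)_n/(q^3;q)_{2n}$, whereas \eqref{mod4ideq1} requires $\sum_n q^{n^2+3n}(-q^2;q^2)_{n-1}/(q^3;q)_{2n}$ after writing $(q;q)_{2n+2}=(1-q)(1-q^2)(q^3;q)_{2n}$; the summands differ by the factor $q^n/(1+q^{2n})$, which sits inside the sum and cannot be absorbed into prefactors, and the companion pair \eqref{chubp46a} is likewise off by $1/(1+q^{2n})$. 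The whole point of this identity in the paper is that it lies beyond the $u,v\to\infty$ (Slater-type) pairs: the paper keeps a free parameter, setting $u=iq^{3/2}$, $v\to\infty$ in \eqref{chubp45}, so that $K_8(n)=\frac{(1+q)(1+q^2)\,q^n}{(1+q^3)(1+q^{2n})}$ supplies exactly the missing $q^n/(1+q^{2n})$ and simultaneously deforms the $\alpha$'s (e.g.\ $\alpha_{4r}=\frac{1+q^{8r+3}}{1+q^3}\,q^{8r^2-2r}$); only then do the four bilateral theta series form and sum, via the Jacobi triple product, to the four mod-$48$ products on the right. With your pair the exponents do not even organise into the stated bilateral series, so the computation would fail rather than merely look different.

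The remaining items are underspecified in ways that matter. For \eqref{misc1eq2}, the Pochhammer base $-q^2$ on the left cannot arise from \eqref{Seq1b} alone: the paper sets $a=i$ and $d=q^2$ in \eqref{sgen660}, inserts into \eqref{Seq1b}, and then replaces $q$ by $iq$; your sketch never introduces a complex specialisation, yet without one the alternating base and the sign pattern $(-1)^{\lfloor (n+1)/2\rfloor}$ are unreachable. For \eqref{misc1eq5} and \eqref{misc1eq9}, the paper uses the companion pairs \eqref{sgen661} and \eqref{sgen662} with $a=-q$, $d=q^2$ (you left the pairs unidentified), inserted into \eqref{Seq1b}---not ``the same transform'' \eqref{Seq1a}---followed by $q\mapsto q^{1/2}$; moreover the $\alpha$-series closes by a single Jacobi triple product, the two residue classes $2r$ and $2r+1$ splicing into one bilateral theta series of modulus $8$, so your expectation that a $_6\psi_6$ specialisation is needed is misplaced: no $_6\psi_6$ evaluation occurs anywhere in the paper's proof of this corollary (unlike in Theorems \ref{r1}--\ref{r1comp}). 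In summary, the strategy is right in outline, but as written the proposal breaks down at \eqref{mod4ideq1} for a structural reason---the identity genuinely requires the $u,v$-parameterised pairs from Chu's formula---and would stall on \eqref{misc1eq2}, \eqref{misc1eq5}, \eqref{misc1eq9} until the complex substitutions $a=i$, $q\mapsto iq$ and $a=-q$, $q\mapsto q^{1/2}$ are identified.
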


\begin{proof}
Let $d=q^2$ and $a=-1$ in the Bailey pair at \eqref{sgen660}.
The identity at \eqref{misc1eq1} follows upon substituting the resulting pair into \eqref{Seq1a} and simplifying.

For \eqref{misc1eq2}, set $d=q^2$ and $a=i$ in the pair at \eqref{sgen660},
substitute the resulting pair into  \eqref{Seq1b}, replace $q$ with $iq$, and rearrange.

If we  set $a=-q$ and  $d=q^2$ in the pair at \eqref{sgen661}, we
get \eqref{misc1eq5} when this pair is inserted in \eqref{Seq1b},
after replacing $q$ with $q^{1/2}$.

 The identity at \eqref{misc1eq9}  follows upon inserting the pair formed
 by setting $a=-q$ and  $d=q^2$ in the Bailey pair at  \eqref{sgen662}
 into \eqref{Seq1b}, and then once again
replacing $q$ with $q^{1/2}$.

Let $v\to \infty$ and set $u=iq^{3/2}$ in the Bailey pair at
\eqref{chubp45}. This gives the Bailey pair (with respect to
$a=q^2$), $\alpha_0=\beta_0=1$, {\allowdisplaybreaks
\begin{align*}
\alpha_{4r}&= \frac{1+q^{8r+3}}{1+q^3}q^{8r^2-2r}\\
 \alpha_{4r+1}&=
 -\frac{1+q^{8r+3}}{1+q^3}q^{8r^2+6r+3}
 -\frac{1+q^{8r+5}}{1+q^3}q^{8r^2+2r}\notag\\
\alpha_{4r-1}&= 0\notag\\
\alpha_{4r-2}&= \frac{1+q^{8r-3}}{1+q^3}q^{8r^2-6r+3}\notag\\
   \beta_n&= \frac{q^n(1+q)(1+q^2)(-q^2 ;q^2)_{n-1}}{(1+q^3)(q^3;q)_{2n}}.\notag
\end{align*}
} Note that the expression for $\beta_n$ follows from the fact that,
with the stated values for $u$ and $v$,
\[
K_8(n)=\frac{(1+q)\left(1+q^2\right) q^n}{\left(1+q^3\right)
\left(1+q^{2
   n}\right)}.
   \]
The identity at \eqref{mod4ideq1} follows upon inserting this Bailey
pair in \eqref{Seq1a}, and using the Jacobi Triple Product identity
to sum pairs of series.
\end{proof}

\subsection{Theta-False Theta ``hybrid" identities.}  \label{hybrid}
Rogers~\cite{R17} referred to a series of the form
\begin{equation} \label{ThetaSeriesDef}
\sum_{n=-\infty}^\infty (\pm 1)^n q^{r n^2 + s n}
= \sum_{n=0}^\infty (\pm 1)^n q^{r n^2 + s n}
+ \sum_{n=1}^\infty (\pm 1)^n q^{r n^2 - s n}
\end{equation}
as a \emph{theta series of order $r$}, where $r \in \mathbb{Q}_+$ and
$s\in\mathbb Q$.
Due to the Jacobi triple product identity, the series~\eqref{ThetaSeriesDef}
may be expressed as the infinite product
\[  \left(  \mp q^s, \mp q^{2r-s}, q^{2r}; q^{2r} \right)_\infty. \]

  Rogers also took interest in series of the form
\begin{equation} \label{FalseThetaSeriesDef}
\sum_{n=0}^\infty (\pm 1)^n q^{r  n^2 + s n}
- \sum_{n=1}^\infty (\pm 1)^n q^{r n^2 -s n}
= \sum_{n=0}^\infty (\pm 1)^n q^{r n^2 + s n} (1 - q^{2(r-s)n + (r-s) } )
\end{equation}
which he called a \emph{theta series of order $r$}.  False
theta series are not representable as infinite products, but nonetheless
 may have representations as Rogers-Ramanujan type $q$-series,
and Rogers gave a number of examples of identities of false theta series.
See~\cite[\S5, p. 35 ff]{McLSZ08} for further discussion and numerous
examples of false theta series identities.

Here we give
several examples of identities where one side is a basic
hypergeometric series and the other side comprises a theta product
multiplied by a false theta series. We believe this type of identity
is new.

\begin{corollary}\label{cor3.10}
\begin{equation}\label{Seq1aqfeq3}
\sum_{n=0}^{\infty}\frac{(-q;q^2)_{n}q^{n^2}}{(1-q^{2n+1})(q^2,q^2;q^2)_{n}}
=\frac{(-q;q^2)_{\infty}}{(q^2;q^2)_{\infty}}
\left( 1 + \sum_{r=0}^\infty q^{8r^2 + 4r+1} (1- q^{8r+4}) \right).
\end{equation}
\begin{equation}\label{misc1eq3}
\sum_{n=0}^{\infty}\frac{q^{n^2+n}}{(1-q^{2n+1})(q,q;q)_n}
= \frac{ \sum_{r=0}^\infty q^{6r^2+2r} (1-q^{8r+4})}{(q;q)_\infty}.
\end{equation}
\begin{equation}\label{misc1eq6}
\sum_{n=0}^{\infty}\frac{q^{n^2+2n}}{(1-q^{2n+1})(q,q;q)_n}
=\frac{\sum_{r=0}^\infty q^{6r^2+4r} (1-q^{4r+2})}{(q;q)_\infty}.
\end{equation}
\begin{equation}\label{misc1eq4}
\sum_{n=0}^{\infty}\frac{(-q;q)_nq^{(n^2+n)/2}}{(1-q^{2n+1})(q,q;q)_n}
=\frac{(-q;q)_{\infty}}{(q;q)_{\infty}}
\sum_{r=0}^\infty q^{4r^2+r} (1-q^{6r+3}).
\end{equation}
\begin{equation}\label{misc1eq8}
\sum_{n=0}^{\infty}\frac{(-q;q)_nq^{(n^2+3n)/2}}{(1-q^{2n+1})(q,q;q)_n}
=\frac{(-q;q)_{\infty}}{(q;q)_{\infty}}
\sum_{r=0}^\infty q^{4r^2 + 3r} (1-q^{2r+1}).
\end{equation}
\begin{equation}\label{misc1eq7}
\sum_{n=0}^{\infty}\frac{q^{n^2+2n}}{(1-q^{2n+1})(q^2;q^2)_n}
=(-q;q^2)_{\infty}
\sum_{r=0}^\infty (-1)^r q^{6r^2+4r} (1-q^{4r+2}).
\end{equation}

\begin{equation}\label{misc1eq10}
\sum_{n=0}^{\infty}\frac{(-q;q^2)_{n+1}
q^{n^2-2n}}{(q^2;q^2)_n(q^2;q^2)_{n+1}}\\
=\frac{(-q;q^2)_{\infty}}{(q^2;q^2)_{\infty}}\left [
1+q+q^2+q^3+\sum_{r=0}^{\infty}q^{r^2}(q^{-2r}-q^{2r})\right ].
\end{equation}
\end{corollary}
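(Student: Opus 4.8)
The plan is to prove all seven identities by the single template that governs the rest of Section~\ref{secids}: choose one of the general Bailey pairs built in Section~2, specialize its free parameters, substitute the resulting pair into the case of the Bailey transform~\eqref{Seq1} whose prefactor matches the product standing on the right-hand side, and then read off the two sides. On the $\beta_n$ side the transform reproduces the basic hypergeometric series on the left of each identity, the recurring factor $1/(1-q^{2n+1})$ coming from a $(q;q^2)_{n+1}$-type denominator in the specialized $\beta_n$; the prefactor supplies the theta product; and the $\alpha_n$ side supplies the false theta series. The prefactor therefore selects the transform: I would use the limit $y,z\to\infty$, i.e.\ \eqref{Seq1a} with $a=1$, for the identities whose prefactor is $1/(q;q)_\infty$, namely~\eqref{misc1eq3} and~\eqref{misc1eq6}; the squaring limit~\eqref{Seq1b} (again $a=1$) for those carrying $(-q;q^2)_\infty/(q^2;q^2)_\infty$, namely~\eqref{Seq1aqfeq3},~\eqref{misc1eq7} and~\eqref{misc1eq10}; and a further specialization of~\eqref{Seq1} (or of~\eqref{Seq1aqf}) tuned to the prefactors $(-q;q)_\infty/(q;q)_\infty$ and $(-q;q^2)_\infty$ for the remaining~\eqref{misc1eq4} and~\eqref{misc1eq8}. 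The source pairs should be the mod-$2$ and mod-$4$ pairs of the previous section, the modulus being fixed by matching the order of the target false theta series: an order-$6$ series such as the one in~\eqref{misc1eq3} comes from a mod-$2$ pair under~\eqref{Seq1a}, since there $\alpha_{2r}$ scales like $q^{2r^2}$ while the weight $q^{n^2}$ contributes $q^{4r^2}$, giving net order $2+4=6$. Each pair is specialized by letting $e\to 0$ (or $e\to\infty$) and, where a parameter $d$ survives, fixing it to a suitable power of $q$.

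The structural point --- and the feature separating these identities from Theorems~\ref{r1},~\ref{r2} and~\ref{r1comp} --- is how the $\alpha_n$ side behaves. In those earlier theorems $y$ was taken finite, equal to $q/z$, so that the two one-sided pieces built from $\alpha_{3r}$ and $\alpha_{3r\pm1}$ fused into a bilateral ${}_6\psi_6$, which~\eqref{baileyeq1} summed to a genuine infinite product. Sending $y,z\to\infty$ (or passing to~\eqref{Seq1b}) destroys that bilateral symmetry: the piecewise $\alpha_n$, supported on residue classes modulo $2$ or $4$ and weighted by $a^n q^{n^2}$, now assemble into a \emph{one-sided} sum of exactly the shape $\sum_{m\ge 0}(\pm1)^m q^{r m^2 + s m}\bigl(1-q^{2(r-s)m+(r-s)}\bigr)$ of~\eqref{FalseThetaSeriesDef}, where $r$ is the order and $s$ the shift. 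Thus the false theta series is simply the $\alpha_n$-sum in closed form, read off rather than summed. I would confirm its shape case by case by checking that consecutive exponents telescope into the displayed $(1-q^{\cdots})$ factors; for~\eqref{misc1eq3}, for instance, the series $\sum_r q^{6r^2+2r}(1-q^{8r+4})$ is right because $q^{6r^2+10r+4}=q^{6(r+1)^2-2(r+1)}$, so the subtracted terms are precisely the reflection of the positive ones across $r=0$. In the cases where the prefactor is itself a theta product times $1/(q;q)_\infty$ one may, alternatively, have to invoke the Jacobi triple product to fold a bilateral subfamily of the $\alpha_n$-sum into that product, leaving the remaining terms as the false theta series --- exactly the device used for~\eqref{mod4ideq1}.

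The main obstacle is the simultaneous matching of all three pieces under one specialization. A single choice of parameters must collapse $\beta_n$ to the displayed summand (with its $1/(1-q^{2n+1})$ factor and the correct quadratic power of $q$), reduce the prefactor to the advertised theta product after routine $q$-Pochhammer manipulation, and at the same time collapse $\alpha_n$ to the intended false theta series. Forcing all three to land together is what pins down the transform and the value of the fixed parameter $d$; a wrong choice typically either spoils the $\beta_n$ denominator or, worse, restores enough symmetry to turn the $\alpha_n$-sum back into a bilateral (hence summable, true-theta) series. This is also why the alternating cases, such as~\eqref{misc1eq7} whose false theta series carries $(-1)^r$, should be sought from a pair evaluated at a specific root of $q$ via~\eqref{Seq1b} rather than from a plain real specialization. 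Once the correct pair, transform, and parameter are in place, each identity follows, exactly as in the preceding proofs of this section, after elementary simplification of the resulting $q$-products.
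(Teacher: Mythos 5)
Your template is indeed the paper's: specialize the general Bailey pairs of Section 2, insert them into a limiting case of \eqref{Seq1}, and read the false theta series off the $\alpha$-side. But the concrete assignments you propose would fail, and they are not the ones the paper uses. The clearest breakdown is your choice of $a=1$ in \eqref{Seq1a} for \eqref{misc1eq3} and \eqref{misc1eq6}. With $a=1$ the left side is $\sum_n q^{n^2}\beta_n$ for a pair relative to $1$, and no pair relative to $1$ in the paper (nor any with the standard $(q;q)_{n-j}(q;q)_{n+j}$ denominators) produces the factor $1/(1-q^{2n+1})$: that factor arises precisely from $(aq^2;q^2)_n$-type denominators evaluated at $a=q$. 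The paper sets $d=q^2$, $a=q$ in \eqref{sgen661}, which gives $\beta_n=(1-q)/[(1-q^{2n+1})(q,q;q)_n]$; the stray $1-q$ then cancels against the prefactor $1/(q^2;q)_\infty=(1-q)/(q;q)_\infty$, and that cancellation is how the advertised $1/(q;q)_\infty$ appears. Your inference ``prefactor $1/(q;q)_\infty$, hence $a=1$'' is exactly the step that breaks; likewise the weights $q^{n^2+n}$ and $q^{n^2+2n}$ come from $a=q$ in \eqref{Seq1a} combined with the absence or presence of the extra $q^n$ in $\beta_n$ of \eqref{sgen661} versus \eqref{sgen662}, not from anything available at $a=1$.

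The other assignments are also off. The identities \eqref{Seq1aqfeq3} and \eqref{misc1eq10} come from \eqref{Seq1aqf} (with $a=q$, then $q\to q^2$, and for the first also $q\to-q$), not from \eqref{Seq1b}; moreover \eqref{misc1eq10} does not use a mod-$2$ or mod-$4$ pair at all, but Slater's pair H(12), i.e.\ $a=q$, $c=q$, $b\to0$ in the general pair \eqref{Sgen1}, so your restriction of the sources to the mod-$2$/mod-$4$ tables excludes a needed case. The alternating identity \eqref{misc1eq7} requires no root-of-unity evaluation via \eqref{Seq1b}: the paper simply sets $a=-q$, $d=q^2$ in \eqref{sgen662} and uses \eqref{Seq1a}. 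Finally, \eqref{misc1eq4} and \eqref{misc1eq8}, which you leave to an unspecified ``further specialization of \eqref{Seq1}'', are exactly the two that use \eqref{Seq1b} (followed by $q\to q^{1/2}$). So while the architecture of your plan matches the paper, essentially every transform/parameter choice would have to be redone before the computations could go through, and as stated several of them (most seriously the $a=1$ choices) lead to dead ends rather than to the displayed identities.
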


\begin{proof}
The identity at \eqref{Seq1aqfeq3} follows  upon setting $d= q^2$
and  $a=q$ in the pair at \eqref{sgen661}, substituting the
resulting pair (Slater's pair \textbf{I(10)}) into \eqref{Seq1aqf}
(with $a=q$), replacing $q$ with $q^2$, and finally $q$ with $-q$.

The identity at \eqref{misc1eq3} follows from setting $d=q^2$ and
$a=q$ in the Bailey pair at \eqref{sgen661}, and substituting the
resulting pair into \eqref{Seq1a}, while the identity at
\eqref{misc1eq4} follows from substituting the same pair in
\eqref{Seq1b}, and replacing $q$ with $q^{1/2}$.

Likewise, the identity at \eqref{misc1eq6} follows as a consequence
setting $d=q^2$ and $a=q$ in the Bailey pair at \eqref{sgen662}, and
substituting the resulting pair into \eqref{Seq1a}, while for
\eqref{misc1eq7} we proceed similarly, after creating a Bailey pair
by instead setting $a=-q$ (and keeping $d=q^2$).

The identity at \eqref{misc1eq8}  follows upon inserting the pair
used in the proof of \eqref{misc1eq6} into \eqref{Seq1b}, after
replacing $q$ with $q^{1/2}$.

For \eqref{misc1eq10}, insert Slater's pair \textbf{H(12)} ($a=q$,
$c=q$ and $b \to 0$ in \eqref{Sgen1}) into \eqref{Seq1aqf} (with
$a=q$), replace $q$ with $q^2$, and rearrange.

\end{proof}

\section{Concluding Remarks}
In a paper by the second author~\cite{S07}, it was shown that
more than half of Slater's identities could be derived from
just three general Bailey pairs together with several limiting
cases of Bailey's lemma and an associated family of
$q$-difference equations.  Here, we attempt to put Slater's
work in a broader context via general Bailey pairs, but
without the use of $q$-difference equations.
Both approaches have their merits, and both yield new identities.
It may well be worth exploring the combinatorial consequences of
the new identities presented here.

We also note that we have given just a sample of the identities that may be produced by the methods used in Corollaries \ref{cor3.6} - \ref{cor3.10}, and that it is likely that many similar identities may be produced by employing other Bailey pairs.

\section{Addendum: May 2023}
On May 8, 2023, the second author received an email from Aritram Dhar sharing the following
observations:

\begin{quotation}
I was going through your 2010 paper ``Some implications of Chu's ${}_{10}\psi_{10}$ extension of Bailey's 
${}_{6}\psi_6$ 
summation formula'' joint with J. McLaughlin and P. Zimmer and I came across the 
${}_{10}\psi_{10}$  identity in Corollary 1.2 which follows by replacing $b$ with 
$a/c$ in Proposition 1.1 which is the 
${}_{10}\psi_{10}$  identity in Theorem 2 of Chu's 2006 paper 
``Bailey's very well-poised ${}_{6}\psi_{6}$-series identity''.  
I have a comment regarding your Corollary 1.2 which is as follows:

If you look at Exercise 5.26 on page 152 of Gasper and Rahman's Basic Hypergeometric Series text, 
the identity there is a ${}_{6+2k}\psi_{6+2k}$ series-product identity. 
As mentioned at the end of the problem, if 
we refer to Chu's 1998 paper ``Partial-fraction expansions and well-poised bilateral series'' 
and look at 
Theorem 2 on page 500, then it is essentially the same identity as that in Gasper and Rahman. 
Now, 
if we consider $(k,n_1,n_2,N) = (2,1,1,2)$ and make the substitution 
$(a,b,c,d,e_1,e_2)\to (a,c,d,e,qu,qv)$ in 
Gasper and Rahman's Exercise 5.26 ${}_{6+2k}\psi_{6+2k}$ identity, we get exactly Corollary 1.2 of your paper. 
By making a similar substitution in the ${}_{6+2k}\psi_{6+2k}$  identity, I believe we can get Proposition 1.1 (Theorem 2 of Chu's 2006 paper) of your paper.   
\end{quotation}

\section{Acknowledgment}
We thank Aritram Dhar for his careful reading of our paper, and for sharing his observations 
now included in Section 5.

 \allowdisplaybreaks{

}


\begin{thebibliography}{99}

\bibitem{A73}
G. E. Andrews, {On the q-analog of Kummer's theorem and applications},
Duke Math.
J. {40} (1973) 525--528.

\bibitem{A74}
G. E. Andrews, {Applications of basic hypergeometric
functions.} SIAM Rev. {16} (1974) 441--484.


\bibitem{AAR99}
G. E. Andrews, R. Askey, and R. Roy, \emph{Special
functions.} Encyclopedia of Mathematics and its Applications,
{71}. Cambridge University Press, Cambridge, 1999. xvi+664
pp.

\bibitem{AB09}
G. E. Andrews, and B. C. Berndt,
\emph{Ramanujan's lost notebook, Part II.}
Springer, New York, 2009. xii+418 pp.

\bibitem{B36}
W. N. Bailey, {Series of hypergeometric type which are
infinite in both directions}, Quart. J.Math.{7} (1936)
105--115.


\bibitem{C05}
W. Chu, {Bailey's very well-poised ${}\sb 6\psi\sb
6$-series identity}, J. Combin. Theory Ser. A {113} (2006) 966--979.


\bibitem{GR04}
G. Gasper and M. Rahman,  \emph{Basic hypergeometric series},
2nd edition. Encyclopedia of
Mathematics and its Applications, vol. 96. Cambridge University Press,
Cambridge, 2004. xxvi+428 pp.


\bibitem{H47}
E. Heine, Untersuchungen \"uber die Reinhe
\[ 1 + \frac{(1-q^\alpha)(1-q^\beta)}{(1-q)(1-q^\gamma)} \cdot x +
\frac{(1-q^\alpha)(1-q^{\alpha+1})(1-q^\beta)(1-q^{\beta+1})}
{(1-q)(1-q^2)(1-q^\gamma)(1-q^{\gamma+1})} \cdot x^2 + \cdots, \]
J. Reine Agnew. Math. 34 (1847) 285--328.

\bibitem{McLS08}
J. McLaughlin and A. V. Sills, {Ramanujan-Slater type
identities related to the moduli 18 and 24}, J. Math. Anal. Appl.
{344} (2008) 765--777.

\bibitem{McLSZ08}
J. McLaughlin, A. V. Sills, and P. Zimmer,
Rogers-Ramanujan-Slater Type Identities. Electronic J.
Combinatorics 15 (2008) \#DS15, 59 pp.

\bibitem{P88}
Padmavathamma \emph{Some Studies in Partition Theory and $q$-Series}, Ph.D. thesis, University of Mysore, Mysore, 1988.

\bibitem{R88}
S. Ramanujan, \emph{The Lost Notebook and Other Unpublished Papers},
Narosa, New Delhi, 1988.

\bibitem{R17}
L. J. Rogers, On two theorems of combinatory analysis and some allied
identities, Proc. London Math. Soc. 16 (1917) 315--335.

\bibitem{S59}
H. S. Shukla, {A note on the sums of certain bilateral
hypergeometric series}, Proc. Cambridge Philos. Soc. {55}
(1959) 262--266.

\bibitem{S07}
A. V. Sills, Identities of the Rogers-Ramanujan-Slater type,
Int. J. Number Theory 3 (2007) 293--323.
\bibitem{S51}
L. J. Slater, {A new proof of Rogers's transformations of
infinite series}, Proc. London Math. Soc. (2) 53 (1951) 460--475.



\bibitem{S52}
L. J. Slater,
{Further identities of the Rogers-Ramanujan
type}, Proc. London Math.Soc. (2) {54} (1952) 147--167.




\end{thebibliography}
\end{document}